\documentclass[12pt,reqno]{amsart}

\usepackage{amsfonts}
\usepackage{amscd}
\usepackage{amssymb}
\usepackage{amsfonts}
\usepackage{amscd}
\usepackage{amsmath} 
\usepackage{mathrsfs}
\usepackage{dsfont}
\usepackage{enumerate}
\usepackage{xcolor}
\usepackage{float} % To use [H]
\usepackage[pdftex]{graphicx}
\allowdisplaybreaks
\usepackage{url}
\usepackage[colorlinks=true,
            linkcolor=blue!70!black,
            citecolor=teal,
            urlcolor=blue!60!black]{hyperref}

\date{\today}

\newtheorem*{theorem*}{Theorem}
\newtheorem{theorem}{Theorem}[section]
\newtheorem{corollary}[theorem]{\bf{Corollary}}
\newtheorem{lemma}[theorem]{Lemma}
\newtheorem{proposition}[theorem]{Proposition}
\theoremstyle{definition}
\newtheorem{definition}[theorem]{Definitions}
\theoremstyle{remark}
\newtheorem{remark}[theorem]{\bf{Remark}}

\numberwithin{equation}{section}

\newcommand{\beas}{\begin{eqnarray*}}
\newcommand{\eeas}{\end{eqnarray*}}
\newcommand{\bes} {\begin{equation*}}
\newcommand{\ees} {\end{equation*}}
\newcommand{\be} {\begin{equation}}
\newcommand{\ee} {\end{equation}}
\newcommand{\bea} {\begin{eqnarray}}
\newcommand{\eea} {\end{eqnarray}}

\newcommand{\R}{\mathbb R}

\newcommand{\bn}{\mathbb{B}^N}

\usepackage{color}

\title[Multiple Solutions to the Brezis-Nirenberg Problem on Hyperbolic Spaces]{Multiplicity of Solutions to the Brezis-Nirenberg Problem on Hyperbolic Spaces}

\author[Ghosh, Kumar, Rana]{Sekhar Ghosh, Vishvesh Kumar,  Tapendu Rana}	

\address{Sekhar Ghosh \endgraf  Department of Mathematics, National Institute of Technology Calicut, 
\endgraf NITC PO, Kozhikode, Kerala, 673601, India.}
\email{sekharghosh1234@gmail.com, sekharghosh@nitc.ac.in}

\address{Vishvesh Kumar  \endgraf   Department of Mathematical Sciences,  Indian Institute of Technology (BHU), 
\endgraf Varanasi, Uttar Pradesh, 221005, India.}
\email{vishvesh.mat@iitbhu.ac.in, vishveshmishra@gmail.com,}

 \address{Tapendu Rana  \endgraf Department of Mathematics: Analysis, Logic and Discrete Mathematics,	\endgraf Ghent University, 	Krijgslaan 281, Building S8, B 9000 Ghent, Belgium.} \email{tapendurana@gmail.com, tapendu.rana@ugent.be}

\date{}
\keywords{Brezis–Nirenberg problem, hyperbolic space,  critical exponent, multiplicity of solutions, Palais–Smale sequences}
\subjclass[2020]{Primary: 35B33, 35J60; Secondary: 58E05, 35R01, 35J20.}
\begin{document}
\begin{abstract}
This article investigates the multiplicity of solutions to the Brezis-Nirenberg problem on smooth bounded domains in the hyperbolic space $\mathbb{B}^N$ for $N \ge 4$. Specifically, we study the critical semilinear equation $-\Delta_{\mathbb{B}^N} u = \lambda u + |u|^{2^*-2}u$ under Dirichlet boundary conditions for $\lambda > \frac{N(N-2)}{4}$. Overcoming the analytic challenges induced by the hyperbolic geometry and the intricate concentration profiles of Palais--Smale sequences, we establish the existence of multiple pairs of nontrivial solutions. Using the equivariant Ljusternik-Schnirelmann category, we obtain lower bounds on the number of solutions depending on the position of the parameter $\lambda$ relative to the Dirichlet spectrum of the Laplace-Beltrami operator.
\end{abstract}

\maketitle
\section{Introduction}

In this article, we study the existence of multiple solutions to the problem 
\begin{equation}\label{eqn_MBNP_Hn_int}\tag{$\wp$}
\begin{cases}
    -\Delta_{\bn} u = \lambda u + |u|^{2^*-2}u & \text{in } \Omega_{\bn}, \\
    u = 0 & \text{on } \partial\Omega_{\bn},
\end{cases}
\end{equation}
where  $\Omega_{\bn}$ is a smooth bounded domain in the Poincaré ball model of the hyperbolic space $\bn$,  $ -\Delta_{\bn}$ denotes the Laplace-Beltrami operator on $\bn$, $\lambda> \frac{N(N-2)}{4}$, $N \geq 4$, and $2^* = \frac{2N}{N-2}$ is the critical Sobolev exponent.

\subsection{Background on the Brezis-Nirenberg Problem}
The study of semi-linear elliptic equations with critical growth began with the seminal work of Brezis and Nirenberg \cite{BN83}, who investigated the Euclidean version of \eqref{eqn_MBNP_Hn_int} on bounded domains $\Omega \subset \mathbb{R}^N$:
\begin{equation}\label{eqn_MBNP_Rn_int}
\begin{cases}
    -\Delta u = \lambda u + |u|^{2^*-2}u & \text{in } \Omega, \\
    u = 0 & \text{on } \partial\Omega.
\end{cases}
\end{equation}
Historically, interest in these equations was motivated by their deep connections to the Yamabe problem in Riemannian geometry, as well as critical phenomena in physics such as the Yang-Mills functional and H-systems \cite{BN83, JL87}. 

There has been an extensive study of the Brezis-Nirenberg problem over the past few decades, particularly concerning the multiplicity of solutions. The first multiplicity results were obtained by Cerami, Fortunato, and Struwe \cite{CFS84}. Later, Devillanova and Solimini \cite{DS02} obtained profound multiplicity results by showing that if $N \geq 7$, then \eqref{eqn_MBNP_Rn_int} possesses infinitely many solutions for every $\lambda > 0$ (see also \cite{DS03, SZ10}). Building on these foundations, Clapp and Weth \cite{CW05} established significant multiplicity results in Euclidean space by exploiting the topology of the solution space. They established that for $N \geq 4$, if $\lambda > 0$ is not a Dirichlet eigenvalue, the problem admits at least $(N+1)/2$ pairs of nontrivial solutions. Furthermore, they showed that if $0 < \lambda < \Lambda_1$ (where $\Lambda_1$ is the first Dirichlet eigenvalue of $-\Delta$ on $\Omega$), the number of nontrivial solution pairs is at least $(N+2)/2$ and provided lower bounds even when $\lambda$ is an eigenvalue of multiplicity $m$. For further progress on the study of the Brezis-Nirenberg problem in the Euclidean settings, we refer to \cite{AGKR25, SV13, SV15, AU22, SFV24} and the references therein. 

\subsection{The Brezis-Nirenberg Problem in Hyperbolic Spaces}
The investigation of the Brezis-Nirenberg problem on negative curvature manifolds, specifically on bounded domains within the hyperbolic space $\bn$, was initiated by Stapelkamp \cite{Sta02, Sta03}. Let $\lambda_n$ denote the $n$-th Dirichlet eigenvalue of the Laplace-Beltrami operator $-\Delta_{\bn}$ on $\Omega_{\bn}$, counted with multiplicity. Stapelkamp's foundational result can be stated as follows:

\begin{theorem}[{{\cite[Page 48]{Sta03}}}]\label{thm_BN_Hn}
Let $\Omega_{\bn}$ be a bounded domain in $\mathbb{B}^N$ for $N \geq 4$. Then the following statements hold:
\begin{enumerate}
    \item\label{statment_1} For $\lambda \geq \lambda_1$, problem \eqref{eqn_MBNP_Hn_int} admits no nontrivial positive solutions.
    \item\label{statment_2} For $\lambda \leq \frac{N(N-2)}{4}$, if $\Omega_{\bn}$ is star-shaped, problem \eqref{eqn_MBNP_Hn_int} admits no nontrivial positive solutions.
    \item If $\frac{N(N-2)}{4} < \lambda < \lambda_1$, then problem \eqref{eqn_MBNP_Hn_int} admits at least one nontrivial positive solution.
\end{enumerate}
\end{theorem}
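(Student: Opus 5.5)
Throughout, we work in the Poincaré ball, where the metric is $g=\big(\tfrac{2}{1-|x|^2}\big)^2 g_{\mathrm{Euc}}$, and use the conformal change that relates $-\Delta_{\bn}$ to the Euclidean Laplacian. Writing $p(x)=\frac{2}{1-|x|^2}$ and $v=p^{\frac{N-2}{2}}u$, the conformal covariance of the conformal Laplacian, together with the constant scalar curvature $-N(N-1)$ of $\bn$, gives
\[
-\Delta_{\bn}u-\tfrac{N(N-2)}{4}u = p^{-\frac{N+2}{2}}(-\Delta v).
\]
Hence \eqref{eqn_MBNP_Hn_int} becomes the Euclidean problem
\[
-\Delta v = \Big(\lambda-\tfrac{N(N-2)}{4}\Big)p(x)^2 v + |v|^{2^*-2}v \quad \text{in } \Omega_{\bn}\subset B_1(0),
\]
with $v=0$ on $\partial\Omega_{\bn}$. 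Here $p^2$ is smooth and bounded above and below on $\overline{\Omega_{\bn}}$, because $\Omega_{\bn}$ is bounded in $\bn$ and therefore compactly contained in $B_1$. All three statements are then proved for this weighted Euclidean problem.

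For (1), suppose $u>0$ solves \eqref{eqn_MBNP_Hn_int} and let $\varphi_1>0$ be the first Dirichlet eigenfunction of $-\Delta_{\bn}$. Testing the equation with $\varphi_1$ and integrating by parts with respect to $dV_{\bn}$ gives
\[
\lambda_1\int u\varphi_1 = \lambda\int u\varphi_1 + \int u^{2^*-1}\varphi_1 .
\]
The last integral is strictly positive, so $\lambda<\lambda_1$, which contradicts $\lambda\ge\lambda_1$.

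For (2), apply the Pohozaev identity to $v$ with the Euclidean vector field $x\cdot\nabla v$. Star-shapedness of $\Omega_{\bn}$ makes the boundary term $\int_{\partial\Omega}(x\cdot\nu)|\partial_\nu v|^2$ nonnegative. The critical term cancels as usual. The remaining term comes from the weight $\mu p^2$ with $\mu=\lambda-\frac{N(N-2)}{4}\le 0$ and equals
\[
-\tfrac{\mu}{2}\int_\Omega \big(2p^2+x\cdot\nabla(p^2)\big)v^2 .
\]
Since $x\cdot\nabla(p^2)=\frac{16|x|^2}{(1-|x|^2)^3}\ge0$, this integral has a definite sign. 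For $\mu<0$ the sign gives a contradiction outright. For $\mu=0$ the equation is the pure critical problem, and the classical Pohozaev argument forces $\partial_\nu v=0$ on the boundary portion where $x\cdot\nu>0$; unique continuation then gives $v\equiv0$. The step that needs care is checking the sign bookkeeping in the Pohozaev identity with a variable coefficient. Whether star-shapedness with respect to the origin of the ball is enough, or whether one must first use an isometry of $\bn$ to move the star center to $0$, also needs to be settled. I would handle the latter by noting that isometries of $\bn$ preserve the problem.

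For (3), use the Brezis–Nirenberg minimization. Set
\[
S_\lambda=\inf\Big\{\int|\nabla_{\bn}u|^2-\lambda\int u^2 \;:\; \|u\|_{L^{2^*}}=1\Big\}.
\]
The condition $\lambda<\lambda_1$ gives $S_\lambda>0$. The standard Brezis–Lieb and concentration argument shows that the infimum is attained whenever $S_\lambda<S$, where $S$ is the best Euclidean Sobolev constant; this uses local Euclidean behavior and the conformal invariance of the $L^{2^*}$ and Dirichlet-energy parts. A minimizer can be taken nonnegative by replacing $u$ with $|u|$, and it becomes a positive solution after rescaling and applying the strong maximum principle.

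The main obstacle is the strict inequality $S_\lambda<S$. To prove it, test with the conformally transported Aubin–Talenti bubble $v_\varepsilon=\eta\,\varepsilon^{\frac{N-2}{2}}(\varepsilon^2+|x-x_0|^2)^{-\frac{N-2}{2}}$. In the $v$-formulation the quotient equals
\[
\frac{\int|\nabla v_\varepsilon|^2-\mu\int p^2v_\varepsilon^2}{\|v_\varepsilon\|_{2^*}^2}.
\]
For $N\ge5$ the expansions are $\int|\nabla v_\varepsilon|^2 = S^{N/2}+O(\varepsilon^{N-2})$ and $\int p^2 v_\varepsilon^2\ge c\,\varepsilon^2$. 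For $N=4$ the second bound improves to $\int p^2 v_\varepsilon^2\ge c\,\varepsilon^2|\log\varepsilon|$. In every case with $N\ge4$ and $\mu>0$, the negative term dominates the error, so the quotient is strictly less than $S$ for small $\varepsilon$.
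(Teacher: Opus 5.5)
Your proof is correct. The paper gives no proof of this statement; it is quoted from Stapelkamp \cite{Sta03}. So the only comparison available is with the ingredients the paper reuses:
\begin{itemize}
\item The conformal reduction of Section~\ref{sec_prelim}. The paper derives it by direct integration by parts, using $\nabla\cdot(\varrho x)=\varrho^2|x|^2+N\varrho$. You get it in one line from the covariance of the conformal Laplacian and $R_g=-N(N-1)$.
\item For part (3), the cut-off bubble estimate of Lemma~\ref{lem_I_l<1/NS}, together with the remark in the proof of Proposition~\ref{prop_main_thm} that Stapelkamp's positive solution is the Nehari ground state. This is the same mechanism as your minimization of the Rayleigh quotient under $S_\lambda<S$.
\end{itemize}

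Your orders, $\varepsilon^2$ against $\varepsilon^{N-2}$ (and $\varepsilon^2|\log\varepsilon|$ against $\varepsilon^2$ when $N=4$), are right for your normalization. Lemma~\ref{lem_I_l<1/NS} quotes the same expansion in the Brezis--Nirenberg normalization, where the orders read $\varepsilon$ and $\varepsilon^{\frac{N-2}{2}}$. Two small slips in (3) are harmless because the quotient is $0$-homogeneous. The leading term $S^{N/2}$ needs the factor $[N(N-2)]^{\frac{N-2}{4}}$ in the bubble. You should also record the denominator expansion $|v_\varepsilon|_{2^*}^2=S^{\frac{N-2}{2}}+O(\varepsilon^N)$.

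Parts (1) and (2) have no counterpart in the paper. Your arguments are the standard ones and they work. The two points you left open in (2) can be closed as follows. Running Pohozaev with the field $x-x_0$ (outward normal $\nu$) gives
\[
\frac12\int_{\partial\Omega}\bigl((x-x_0)\cdot\nu\bigr)(\partial_\nu v)^2\,dS=\frac{\mu}{2}\int_\Omega\bigl(2p^2+(x-x_0)\cdot\nabla(p^2)\bigr)v^2\,dx,\qquad 2p^2+(x-x_0)\cdot\nabla(p^2)=\frac{8\bigl(|x-x_0|^2+1-|x_0|^2\bigr)}{(1-|x|^2)^3}>0.
\]
This settles your sign bookkeeping. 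It also shows that for $\mu<0$, Euclidean star-shapedness with respect to any $x_0\in\bn$ already suffices, so there is no need to move the center. If star-shapedness is meant geodesically, your isometry reduction is also right, since geodesics through $0$ in the ball model are diameters.

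For $\mu=0$, only positive solutions are at stake, so Hopf's lemma is more direct than unique continuation. It gives $\partial_\nu v<0$ on $\partial\Omega$. Combined with $\int_{\partial\Omega}(x-x_0)\cdot\nu\,dS=N|\Omega|>0$, this yields the contradiction at once.
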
 
\begin{remark}
An analogous version of statements \eqref{statment_1} and \eqref{statment_2} in Theorem \ref{thm_BN_Hn} remains true for $N=3$; we refer to \cite[Page 59]{Sta03} for more details. For the precise definition of a star-shaped domain on $\bn$, we refer the reader to \cite[Page 13]{Sta03}.
\end{remark}
\noindent More recently, Li, Lu, and Yang  \cite{LLY22} have expanded this theory to the higher-order Brezis-Nirenberg problem associated with GJMS operators on both bounded domains and the entire hyperbolic space, utilizing Helgason-Fourier analysis and Green's function estimates.

Parallel research has extensively explored these problems on the full hyperbolic space. Mancini and Sandeep \cite{MS08} provided foundational results on the existence, nonexistence, and uniqueness of positive entire solutions, proving that a solution on the whole of $\bn$ exists if and only if $\frac{N(N-2)}{4} < \lambda < \frac{(N-1)^2}{4}$ for $N \geq 4$. Ganguly and Sandeep \cite{GK14} extended this work to sign-changing solutions, establishing that no solutions exist for $\lambda \leq \frac{N(N-2)}{4}$ and proving the existence of infinitely many radial sign-changing solutions when $N \geq 7$. Very recently, Bhakta, Ganguly, Gupta, and Sahoo \cite{BGGS25} analyzed the global compactness properties and profile decompositions for critical problems in the hyperbolic setting, highlighting how concentration occurs via both hyperbolic isometries and localized Aubin--Talenti bubbles. An excellent survey of recent developments in the study of PDEs on hyperbolic spaces is presented in Sandeep \cite{San26}. For related progress on the sub-Riemannian version of the Brezis-Nirenberg problem, we refer the reader to recent works \cite{GKR24a, GKR24b, GK25a, L07, L19} and the references therein.

\subsection{Main Result} 
Despite the extensive literature on existence and compactness profiles in hyperbolic space, the \textit{multiplicity} of solutions for the Brezis-Nirenberg problem on bounded hyperbolic domains has remained largely unexplored. 

{In this article, our primary focus is to fill this gap by establishing the existence of multiple pairs of solutions for the hyperbolic Brezis-Nirenberg problem within smooth bounded domains for all parameters $\lambda > \frac{N(N-2)}{4}$. While our topological strategy draws inspiration from the Euclidean framework introduced by Clapp and Weth \cite{CW05}, the hyperbolic geometry introduces severe analytic obstacles. The negative curvature of the space induces a critical spectral shift $\frac{N(N-2)}{4}$ that fundamentally compromises the standard coercivity of the operator. Furthermore, the conformal weight of the Riemannian metric intricately alters the concentration profiles and compactness thresholds of the associated Palais--Smale sequences. Crucially, the presence of this spatial weight completely breaks the translation invariance of the space. Consequently, standard Euclidean techniques of spatially shifting localized Aubin--Talenti bubbles fail, as such translations pull the test functions off the Nehari manifold and destroy the strict odd symmetries required for topological multiplicity arguments.}

{To overcome these geometric obstacles, we must finely calibrate the relative equivariant Ljusternik-Schnirelmann category to our weighted Euclidean reduction. By systematically employing continuous Nehari retractions, we correct the broken translation invariance and restore the necessary $\mathbb{Z}_2$-symmetries. This framework allows us to accurately track the min-max energy levels against the shifted hyperbolic spectrum and explicitly separate the topological critical points from the Aubin--Talenti concentration levels. Our main result is as follows:
}
\begin{theorem}\label{th_MBNP_Hn}
    Assume $N \geq 4$.
\begin{enumerate}
    \item[(i)]  If $\lambda_n < \lambda < \lambda_{n+1}$, then problem \eqref{eqn_MBNP_Hn_int} admits at least $\frac{N+1}{2}$ pairs of nontrivial solutions.
    \item[(ii)] If $\frac{N(N-2)}{4} < \lambda < \lambda_1$, then problem \eqref{eqn_MBNP_Hn_int} admits at least $\frac{N+2}{2}$ pairs of nontrivial solutions.
    \item[(iii)] If $\lambda = \lambda_{n+1} = \cdots = \lambda_{n+m}$ is an eigenvalue of multiplicity $m < N+2$, then problem \eqref{eqn_MBNP_Hn_int} has at least $\frac{N+1-m}{2}$ pairs of nontrivial solutions.
\end{enumerate}
Furthermore, these solutions satisfy the uniform energy estimate
\begin{align}\label{eq_nabu2<2S}
\int_{\Omega_{\bn}} |\nabla_{\bn} u|^2 \, dV - \frac{N(N-2)}{4} \int_{\Omega_{\bn}} |u|^{2} \, dV < 2 S^{N / 2},
\end{align}
where $S$ is the best constant for the Euclidean Sobolev embedding $D^{1,2}(\mathbb{R}^N) \hookrightarrow L^{2^*}(\mathbb{R}^N)$.
\end{theorem}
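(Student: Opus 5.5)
The plan is to transfer the problem to a weighted Euclidean problem via the conformal structure of the Poincaré ball, and then run the Clapp--Weth topological argument \cite{CW05} on the resulting functional.

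\textbf{Conformal reduction.} Write $g_{\bn} = p^2 g_{\mathrm{eucl}}$ with $p(x) = 2/(1-|x|^2)$, and set $v = p^{(N-2)/2} u$. The conformal covariance of the conformal Laplacian, combined with the constant scalar curvature of $\bn$, gives
\[
-\Delta_{\bn} u - \tfrac{N(N-2)}{4} u = p^{-\frac{N+2}{2}}(-\Delta v).
\]
Hence \eqref{eqn_MBNP_Hn_int} becomes the Euclidean problem
\[
-\Delta v = \Big(\lambda - \tfrac{N(N-2)}{4}\Big) p^2 v + |v|^{2^*-2} v \quad \text{in } \Omega,
\]
with Dirichlet data, where $\Omega \Subset B_1$ is the Euclidean image of $\Omega_{\bn}$. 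Moreover
\[
\int_{\Omega_{\bn}} |\nabla_{\bn} u|^2\,dV - \tfrac{N(N-2)}{4}\int_{\Omega_{\bn}} u^2\,dV = \int_\Omega |\nabla v|^2\,dx .
\]
Since $p$ is bounded above and below on $\bar\Omega$, we have $\mu := \lambda - N(N-2)/4 > 0$, and the weighted eigenvalue problem $-\Delta \phi = \Lambda p^2 \phi$ has eigenvalues $\Lambda_k = \lambda_k - N(N-2)/4$. So the assumptions on $\lambda$ relative to the $\lambda_k$ become the same assumptions on $\mu$ relative to the $\Lambda_k$. The left side of \eqref{eq_nabu2<2S} is exactly $\int|\nabla v|^2$. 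The even functional
\[
J(v) = \tfrac12\int|\nabla v|^2 - \tfrac{\mu}{2}\int p^2 v^2 - \tfrac{1}{2^*}\int |v|^{2^*}
\]
satisfies (PS)$_c$ for $c < \frac1N S^{N/2}$, by the Brezis--Nirenberg concentration argument. The bounded weight term is compact, so bubbles carry energy $\frac1N S^{N/2}$ exactly as in the Euclidean case.

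\textbf{Clapp--Weth scheme.} Let $E^-$ be the span of the eigenfunctions with $\Lambda_k \le \mu$ (respectively $<\mu$ in case (iii)) and $E^+$ its weighted-$L^2$ orthogonal complement. We work on the generalized Nehari set
\[
\mathcal N = \{v \notin E^- : J'(v)v = 0,\ J'(v)|_{E^-} = 0\}.
\]
Following \cite{CW05}, we estimate the relative $\mathbb Z_2$-category (or genus) of the sublevel set $\{J \le \frac1N S^{N/2} + \varepsilon\}$, which lies below the compactness threshold $\frac2N S^{N/2}$ for sign-changing-type levels. For this we build an odd map from $S^{N-1}$-type parameter sets: take a family of Aubin--Talenti bubbles $U_{\varepsilon, y}$ concentrated at points $y$ in a small ball $B \subset \Omega$, combined antisymmetrically with $E^-$ components. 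We then project onto $\mathcal N$ using the continuous Nehari retraction (the unique maximizer of $J$ on $\mathbb R^+ v \oplus E^-$).

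Energy expansions then show the maximum of $J$ on this family is strictly below $\frac1N S^{N/2}$ (for $N\ge4$, $\mu>0$) in the one-bubble directions, and below $\frac2N S^{N/2}$ for the antisymmetric pairs $U_{y} - U_{-y}$. Because $p$ is smooth and positive near $y$, the weight only contributes $\mu p(y)^2 \int U^2$ plus lower-order terms, so the Brezis--Nirenberg gain $-c\mu\varepsilon^2$ (respectively $\varepsilon^2|\log\varepsilon|$ when $N=4$) survives. On the other side, a barycenter map sends low-energy sign-changing elements near the level $\frac2N S^{N/2}$ to $\mathbb R^N$ oddly. These two maps give the category count: $(N+1)/2$ pairs in case (i), $(N+2)/2$ in case (ii) where $E^-=\{0\}$ and the positive solution adds one, and $(N+1-m)/2$ in case (iii), with $m$ lost to the kernel. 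Since every solution found has $J < \frac2N S^{N/2}$ and $J = \frac1N\int|\nabla v|^2$ on $\mathcal N$, we get $\int |\nabla v|^2 < 2S^{N/2}$, which is \eqref{eq_nabu2<2S}.

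\textbf{Main obstacle.} The hard step is the uniform energy estimate for the odd test family. The cross terms between the bubbles and the $E^-$ eigenfunctions, and between two bubbles, must be controlled uniformly in the concentration point, in the presence of the nonconstant weight $p^2$. I would first handle this by choosing $B$ so small that $p$ is nearly constant on it, and absorb the variation of $p$ into an $O(\varepsilon^2)$ error that is dominated by the $\mu\varepsilon^2$ gain for $N \ge 5$. For $N = 4$ the gain is the log-enhanced term $\varepsilon^2|\log\varepsilon|$, and the same error is dominated by it; this case needs to be checked carefully.
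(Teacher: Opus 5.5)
Your conformal reduction is correct and matches the paper's: you get it from conformal covariance of $-\Delta_g+\frac{N-2}{4(N-1)}R_g$, the paper by integrating by parts. The multiplicity argument, however, has a genuine gap: nothing in the proposal produces the numbers $\frac{N+1}{2}$, $\frac{N+2}{2}$, $\frac{N+1-m}{2}$.

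\textbf{The counting mechanism.} You only have (PS)$_c$ for $c<\frac1N S^{N/2}$, but the levels you must use run up to $\frac2N S^{N/2}$. The missing ingredient is the Struwe-type alternative. A (PS)$_c$ sequence with $\frac1N S^{N/2}\le c<\frac2N S^{N/2}$ either converges, or equals $v\pm(\text{bubble})+o(1)$ with $v$ critical at level $c-\frac1N S^{N/2}$. Combined with the fact that $c_k=c_{k+1}$ forces infinitely many critical points, this is where the factor $\frac12$ comes from. For the relative-category levels $c_k$ (defined through $\mathcal I^{c}\cup D_\lambda$, not through $\{J\le\frac1NS^{N/2}+\varepsilon\}$), one proves $c_{N+2}<\frac2N S^{N/2}$ and may assume $c_1<\dots<c_{N+2}$. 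Levels below $\frac1NS^{N/2}$ are critical. Each level above yields $c_k$ or $c_k-\frac1NS^{N/2}$, and the latter may duplicate lower ones. This leaves $\max\{k_0,N+1-k_0\}\ge\frac{N+1}{2}$ pairs. The barycenter map plays no role: an odd map into $\mathbb R^N\setminus\{0\}$ only bounds a genus from above.

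\textbf{The test family is one dimension short.} To reach $c_{N+2}<\frac2NS^{N/2}$ you need an odd map from $\mathbb S^N$, not $S^{N-1}$, into $H_0^1(B)$, with both parts $\mathcal H(\theta)^{\pm}$ on $\mathcal N_{\lambda,\varrho}$ and below $\frac1NS^{N/2}$. The paper gets the extra dimension from a radial parameter on $\overline{\mathbb B^N}$. Along it, one profile shrinks into $\mathbf B(0,r_0)$ while the other, cut off by $1-\psi_{r_0}$, slides over it. The paper then doubles this to $\mathbb S^N$, cones it off over $Z$ with an auxiliary bubble $\varphi_0$ to get an odd map on $\mathbb R^{N+2}$, adds $V^-$ on a disjoint support, and applies Borsuk--Ulam on $\partial\mathcal O\cong\mathbb S^{n+N+1}$. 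With only antipodal pairs $U_y-U_{-y}$, $y\in S^{N-1}$, the same argument yields only $c_{N+1}<\frac2NS^{N/2}$ and $\max\{k_0,N-k_0\}\ge\frac N2$ pairs, which is one too few for even $N$.

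Two smaller points. Disjoint supports eliminate the cross terms you flag as the main obstacle, and the variation of $p$ is harmless because $-\mu\int p^2v^2\le-\mu\min_{\Omega}p^2\int v^2$ has the favourable sign. In case (ii), ``the positive solution adds one'' also needs $B_\alpha(\pm P)\subset D_\lambda$, so that the minimax critical points are sign-changing and hence distinct from it.

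\textbf{The final estimate.} Your derivation is wrong as written. On the Nehari manifold
$J(v)=\frac1N|v|_{2^*}^{2^*}=\frac1N\bigl(\int|\nabla v|^2-\mu\int p^2v^2\bigr)$, not $\frac1N\int|\nabla v|^2$. So $J<\frac2NS^{N/2}$ only gives $\int|\nabla v|^2-\mu\int p^2v^2<2S^{N/2}$, that is,
$\int_{\Omega_{\bn}}|\nabla_{\bn}u|^2\,dV-\lambda\int_{\Omega_{\bn}}u^2\,dV<2S^{N/2}$. This has $\lambda$ where the statement has $\frac{N(N-2)}{4}$. Since $\mu\int p^2v^2>0$, the stated inequality is strictly stronger and does not follow. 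The paper's proof makes the same leap: it says the estimate ``follows directly'' from $\mathcal I_{\lambda,\varrho}<\frac2NS^{N/2}$ on the Nehari manifold. So the paper does not fill this hole either; what either argument actually establishes is the version with $\lambda$.
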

 \subsection*{Outline of the Article:}
The remainder of this article is organised as follows. 
In Section \ref{sec_prelim}, we establish the geometric framework of the Poincar{\'e} ball model and implement a conformal change of variables to transform the hyperbolic problem into an equivalent weighted elliptic equation in Euclidean space. Section \ref{sec_grad_flow_cat} is dedicated to the variational setting, where we define the negative gradient flow and the relative equivariant Ljusternik–Schnirelmann category. In Section \ref{sec_palais_smale}, we provide a detailed analysis of the Palais–Smale decomposition, utilizing Struwe’s global compactness theorem to characterize the \textit{bubbling} behavior of concentrating sequences. Section \ref{sec_energy_est} develops the core energy estimates on the Nehari manifold and combines these results to establish the multiplicity theorem for the various spectral cases of $\lambda$.

\section{Geometric Framework and Conformal Reduction}\label{sec_prelim}
In this section, we establish the analytic and geometric foundations for studying the main problem. We begin by reviewing the Poincar{\'e} ball model of the hyperbolic space. Subsequently, we employ a conformal change of variables to transform the nonlinear hyperbolic equation into an equivalent weighted elliptic problem in the standard Euclidean setting. 

\subsection{Ball Model of Hyperbolic Spaces} 

Let $\bn := \{x \in \R^N : |x| < 1\}$ be the unit open ball in $\R^N$. The space $\bn$, endowed with the conformal Riemannian metric $g_{ij}(x) = \varrho^2(x) \delta_{ij}$, where
\begin{align*}
    \varrho(x) = \frac{2}{1 - |x|^2},
\end{align*}
is known as the Poincar\'e ball model of the $N$-dimensional hyperbolic space. Throughout this article, we will work extensively with this model (for more details, see, for instance, \cite{BP92, Rat94}). 

Using standard Euclidean coordinates, the hyperbolic line element and volume element are given respectively by
\begin{align*}
    ds^2 = g_{ij} \, dx^i dx^j = \varrho^2(x) |dx|^2
\end{align*}
and
\begin{align*}
    dV = \sqrt{\det(g_{ij})} \, dx = \varrho^N(x) \, dx,
\end{align*}
where $dx$ and $|dx|$ denote the standard Euclidean volume and line elements.

Consequently, for a given smooth bounded domain $\Omega_{\bn} \subset \bn$,  the hyperbolic gradient and   the  norm of the Dirichlet integral take the form
\begin{align*}
    \nabla_{\bn} u = \varrho^{-1}(x) \nabla u
\end{align*}
and
\begin{align*}
    \int_{\Omega_{\bn}} |\nabla_{\bn} u|^2 \, dV = \int_{\Omega} |\nabla u|^2 \varrho^{N-2}(x) \, dx.
\end{align*}
Furthermore, the Laplace-Beltrami operator on $\bn$ can be expressed in terms of the Euclidean Laplacian as
\begin{equation}\label{eq_reln_Delta_Hn_Rn}
\begin{aligned}
    \Delta_{\bn} u &= \frac{1}{\varrho^N(x)} \sum_{i,j=1}^N \frac{\partial}{\partial x^i} \left( \varrho^N(x) \, \varrho^{-2}(x)\delta^{ij} \frac{\partial u}{\partial x^j} \right) \\
    &= \varrho^{-N}(x) \nabla \cdot \big( \varrho^{N-2}(x) \nabla u \big) \\
    &= \varrho^{-2}(x) \Delta u + (N-2) \varrho^{-3}(x) \nabla \varrho \cdot \nabla u \\
    &= \left(\frac{1-|x|^2}{2}\right)^2 \Delta u + (N-2) \left( \frac{1-|x|^2}{2} \right) (x \cdot \nabla u).
\end{aligned}
\end{equation}
\begin{remark}
    Here, $\Omega_{\bn} \subset \mathbb{B}^N$ and $\Omega \subset \mathbf{B}(0,1) \subset \mathbb{R}^N$ represent the exact same geometric domain, where $\mathbf{B}(x,r) := \{\, y \in \mathbb{R}^N : |y - x| < r \,\}$ denotes the open Euclidean ball of radius $r$ centered at $x$. The distinct notation is used simply to emphasize when explicit Euclidean coordinates and measures are being employed.
\end{remark}

\subsection{Transformation of the Brezis-Nirenberg Problem on $\bn$ to the Euclidean Laplacian}

In this subsection, we transform the problem from the hyperbolic space into an equivalent problem in standard Euclidean space. We follow an approach similar to the one found in \cite[Appendix A.1]{Sta03}. For the sake of clarity and completeness, we present the detailed calculations here. 

Let $\Omega_{\bn} \subset \mathbb{B}^N$ ($N \geq 3$) be a bounded domain with a boundary that is sufficiently smooth to permit integration by parts. Let $\Omega \subset \textbf{B}(1,0) \subset \mathbb{R}^N$ denote the corresponding domain in Euclidean coordinates, and let $\lambda \in \mathbb{R}$ be a parameter.
 
Using \eqref{eq_reln_Delta_Hn_Rn}, the Brezis-Nirenberg type problem \eqref{eqn_MBNP_Hn_int} takes the following form in Euclidean coordinates:
\begin{equation}\label{eqn_BN_prob}
\begin{cases}
    \varrho^{-N} \nabla \cdot \big(\varrho^{N-2} \nabla u \big) + \lambda u + |u|^{2^* - 2}u = 0 & \text{in } \Omega, \\
    u = 0 & \text{on } \partial\Omega.
\end{cases}
\end{equation}
A weak solution $u \in H^1_0(\Omega)$ of \eqref{eqn_BN_prob} satisfies the integral identity
\begin{equation}\label{int_eqn_BN}
    \int_\Omega |\nabla u|^2 \varrho^{N-2} \, dx - \lambda \int_{\Omega} u^2 \varrho^N \, dx - \int_{\Omega} |u|^{2^*} \varrho^N \, dx = 0.
\end{equation}

To eliminate the weight $\varrho^{N-2}$ in the gradient term, we introduce the conformal change of variables 
\begin{align}\label{eq_conf_trans}
    v(x) := \varrho^{\frac{N-2}{2}}(x) u(x),
\end{align} 
for $v \in H^1_0(\Omega)$. Differentiating $u(x) = \varrho^{-\frac{N-2}{2}}(x) v(x)$ and using the identity $\nabla \varrho = \varrho^2 x$, we obtain
\begin{equation*}
    \nabla u = -\frac{N-2}{2} \varrho^{-\frac{N}{2}+2} v x + \varrho^{-\frac{N}{2}+1} \nabla v = \varrho^{-\frac{N}{2}+1} \left( \nabla v - \frac{N-2}{2} \varrho v x \right).
\end{equation*}
Expanding the squared norm yields
\begin{equation}\label{exp_|Gu|^2}
    |\nabla u|^2 = \varrho^{-N+2} \left( |\nabla v|^2 + \frac{(N-2)^2}{4} \varrho^2 v^2 |x|^2 - \frac{N-2}{2} \varrho x \cdot \nabla(v^2) \right).
\end{equation}
Multiplying \eqref{exp_|Gu|^2} by $\varrho^{N-2}$ and integrating over $\Omega$, we apply integration by parts to the last term. Noting the vector calculus identity $\nabla \cdot (\varrho x) = \varrho^2 |x|^2 + N \varrho$, and using the fact that $N \varrho = \frac{N}{2} \varrho^2 (1-|x|^2)$, we compute:
\begin{align*}
    \int_{\Omega} |\nabla u|^2 \varrho^{N-2} \, dx 
    &= \int_{\Omega} |\nabla v|^2 \, dx + \frac{(N-2)^2}{4} \int_{\Omega} v^2 |x|^2 \varrho^2 \, dx - \frac{N-2}{2} \int_{\Omega} \varrho \nabla(v^2) \cdot x \, dx \\
    &= \int_{\Omega} |\nabla v|^2 \, dx + \frac{(N-2)^2}{4} \int_{\Omega} v^2 |x|^2 \varrho^2 \, dx + \frac{N-2}{2} \int_{\Omega} v^2 \nabla \cdot (\varrho x) \, dx \\
    &= \int_{\Omega} |\nabla v|^2 \, dx + \frac{(N-2)^2}{4} \int_{\Omega} v^2 |x|^2 \varrho^2 \, dx \\
    &\hspace{2cm} + \frac{N-2}{2} \int_{\Omega} v^2 \varrho^2 \left( |x|^2  + \frac{N}{2} - \frac{N}{2}|x|^2 \right) \,dx \\
    &= \int_{\Omega} |\nabla v|^2 \, dx + \frac{(N-2)^2}{4} \int_{\Omega} v^2 |x|^2 \varrho^2 \, dx  
    + \frac{N(N-2)}{4} \int_{\Omega} v^2 \varrho^2 \, dx \\
     & \hspace{2cm} - \frac{(N-2)^2}{4} \int_{\Omega} v^2 \varrho^2 |x|^2 \, dx \\
    &= \int_{\Omega} |\nabla v|^2 \, dx + \frac{N(N-2)}{4} \int_{\Omega} v^2 \varrho^2 \, dx.
\end{align*}
Substituting $v = \varrho^{\frac{N-2}{2}} u$ back into the integral equation \eqref{int_eqn_BN}, the weights cancel perfectly, transforming the equation into
\begin{equation*}
    \int_{\Omega} |\nabla v|^2 \, dx + \frac{N(N-2)}{4} \int_{\Omega} v^2 \varrho^2 \, dx - \lambda \int_{\Omega} v^2 \varrho^{-N+2} \varrho^N \, dx - \int_{\Omega} |v|^{2^*} \varrho^{-N} \varrho^N \, dx = 0.
\end{equation*}
This can be equivalently rewritten as
\begin{align*}
    \int_{\Omega} |\nabla v|^2 \, dx + \left( \frac{N(N-2)}{4} - \lambda \right) \int_{\Omega} v^2 \varrho^2 \, dx - \int_{\Omega} |v|^{2^*} \, dx = 0.
\end{align*}
This demonstrates that $v = \varrho^{\frac{N-2}{2}} u \in H^1_0(\Omega)$ is a weak solution of the corresponding boundary value problem
\begin{equation}\label{eq_trns_BNP_v}
\begin{cases}
    -\Delta v = \left( \lambda -\frac{N(N-2)}{4}  \right) \varrho^2 v + |v|^{2^*-2} v & \text{in } \Omega, \\
    v = 0 & \text{on } \partial \Omega.
\end{cases}
\end{equation}
Therefore, the conformal transformation \eqref{eq_conf_trans} successfully converts the problem involving the Laplace-Beltrami operator on $\bn$ into an equivalent problem governed by the standard Euclidean Laplacian on $\R^N$. Consequently, it suffices to study the solutions of \eqref{eq_trns_BNP_v}. Henceforth, our primary objective is to establish the existence of multiple nontrivial solutions to this transformed problem.

The solutions of problem \eqref{eq_trns_BNP_v} correspond to the critical points of the $C^2$-functional $\mathcal{I}_{\lambda, \varrho} : H^1_0(\Omega) \to \mathbb{R}$ defined by
\begin{align}\label{de_of_I_l,p}
\mathcal{I}_{\lambda, \varrho}(v)
= \frac{1}{2} \int_{\Omega} \left( |\nabla v|^2 - \left( \lambda - \frac{N(N-2)}{4}\right) \varrho^2 v^2 \right) dx - \frac{1}{2^*} \int_{\Omega} |v|^{2^*} dx.   
\end{align}
To properly analyze the linear part of \eqref{eq_trns_BNP_v}, let us consider the associated weighted eigenvalue problem: find $\mu \in \mathbb{R}$ and a nontrivial function $v \in H_{0}^{1}(\Omega)$ satisfying
\begin{equation}\label{eqn_ev_pro}
\begin{cases}
    -\Delta v = \mu \varrho^2(x) v & \text{in } \Omega, \\
    v = 0 & \text{on } \partial \Omega.
\end{cases}
\end{equation}
Since $\Omega$ is strictly contained within the unit ball $\mathbf{B}(0,1)$, the conformal weight $\varrho$ is strictly bounded from below and above on $\Omega$. By setting the base spectral shift $\lambda_0 := \frac{N(N-2)}{4}$, the linear coefficient in our problem becomes precisely $\mu = \lambda - \lambda_0$. By standard spectral theory for elliptic operators with bounded weights (see, for instance, \cite{Cue01} and \cite[(1.4)]{Xua03}), problem \eqref{eqn_ev_pro} admits a discrete sequence of eigenvalues $0 < \mu_1 < \mu_2 \leq \mu_3 \leq \cdots \leq \mu_n \to \infty$. 

Consequently, the eigenvalues $\lambda_n$ of the hyperbolic Laplace-Beltrami operator $-\Delta_{\bn}$ correspond exactly to $\lambda_n = \mu_n + \lambda_0$. This yields the shifted sequence
\begin{align*}
    \frac{N(N-2)}{4}= \lambda_0 < \lambda_1 < \lambda_2 \leq \lambda_3 \leq \cdots \leq \lambda_n \to \infty.
\end{align*}
(For a comprehensive discussion on the first eigenvalue $\lambda_1$ of the Laplace-Beltrami operator on hyperbolic domains, we refer the reader to \cite[Chapter 4]{Sta03}.) 

 Using this spectral framework, our main geometric result Theorem \ref{th_MBNP_Hn} follows directly from establishing the following equivalent theorem in the Euclidean setting.
\begin{theorem}\label{th_MBNP_Hn>Rn}
    Assume $N \geq 4$.
\begin{enumerate}
    \item[(i)]  If $\lambda_n < \lambda < \lambda_{n+1}$, then problem \eqref{eq_trns_BNP_v} admits at least $\frac{N+1}{2}$ pairs of nontrivial solutions.
    \item[(ii)] If $\lambda_0 < \lambda < \lambda_1$, then problem \eqref{eq_trns_BNP_v} admits at least $\frac{N+2}{2}$ pairs of nontrivial solutions.
    \item[(iii)] If $\lambda = \lambda_{n+1} = \cdots = \lambda_{n+m}$ is an eigenvalue of multiplicity $m < N+2$, then problem \eqref{eq_trns_BNP_v} has at least $\frac{N+1-m}{2}$ pairs of nontrivial solutions.
\end{enumerate}
Furthermore, these solutions satisfy the Euclidean energy estimate
\begin{align}\label{eq_nabu2<2S_Rn}
\int_{\Omega} |\nabla v |^2 \, dx < 2 S^{N / 2}.   
\end{align}
%where $S$ is the best constant for the Sobolev embedding $D^{1,2}(\R^N)  \hookrightarrow L^{2^*}(\R^N)$. 
\end{theorem}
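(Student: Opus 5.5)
We follow the Clapp--Weth scheme for the functional $\mathcal{I}_{\lambda,\varrho}$ in \eqref{de_of_I_l,p}, with $\mu:=\lambda-\lambda_0$ playing the role of the Euclidean parameter. The weight $\varrho^2$ is bounded above and below on $\overline\Omega$, so the quadratic form $Q(v)=\int_\Omega|\nabla v|^2-\mu\int_\Omega\varrho^2v^2$ is well behaved. We split $H^1_0(\Omega)=E^-\oplus E^0\oplus E^+$ using the weighted eigenfunctions of \eqref{eqn_ev_pro}, orthogonal in $L^2(\varrho^2dx)$. Here $E^-$ is spanned by eigenfunctions with $\mu_k<\mu$, $E^0$ is the $\mu$-eigenspace (of dimension $m$ in case (iii), trivial otherwise), and $Q$ is coercive on $E^+$. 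In case (ii) $E^-=E^0=\{0\}$.

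\textbf{Step 1 (compactness).} We show that $\mathcal{I}_{\lambda,\varrho}$ satisfies $(PS)_c$ for every $c$ in a window below $2\cdot\frac1N S^{N/2}$, except possibly at the single level $c_\infty=\frac1N S^{N/2}+c_0$, with $c_0$ the energy of a possible limit.

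Let $(v_k)$ be a Palais--Smale sequence. It is bounded, by the standard argument using the decomposition $E^-\oplus E^0\oplus E^+$ and the fact that $\varrho^2v_k^2$ is a compact perturbation. Apply Struwe's global compactness theorem: the lower order term $\mu\varrho^2 v$ is subcritical with a smooth bounded coefficient, so the bubbles are exactly the Euclidean Aubin--Talenti bubbles, and half-space profiles carry no energy. This gives
\[
\mathcal{I}_{\lambda,\varrho}(v_k)\to \mathcal{I}_{\lambda,\varrho}(v_0)+\sum_j \tfrac1N\|U_j\|^2 .
\]
Each nonzero bubble costs at least $\frac1N S^{N/2}$, and each sign-changing bubble costs at least $\frac2N S^{N/2}$. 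Hence below $\frac2N S^{N/2}$ at most one positive or negative bubble can appear. Combined with the bound on the energy of critical points, this yields compactness away from the levels $\mathcal{I}(v_0)+\frac1N S^{N/2}$ with $v_0$ a critical point. Recording this also gives the estimate \eqref{eq_nabu2<2S_Rn} for the critical points found below that window.

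\textbf{Step 2 (topology).} On the Nehari-type set $\mathcal N=\{v\notin E^-\oplus E^0: \mathcal{I}'(v)v=0,\ \mathcal{I}'(v)|_{E^-}=0\}$, or via the relative category of the pair (sublevel, boundary), we use the negative gradient flow and the relative $\mathbb Z_2$-category (genus) to produce critical values
\[
c_j=\inf\{\sup_A \mathcal I : \mathrm{genus}(A)\ge j\},\qquad j=1,\dots,k.
\]
We construct odd test sets whose genus is $N+1$ minus the degeneracy, namely $N+1$, $N+2$, or $N+1-m$ in cases (i), (ii), (iii). Fix $x_0\in\Omega$ and take cut-off bubbles $u_{\varepsilon,y}$ centred near $x_0$, together with the difference bubbles $u_{\varepsilon,y}-u_{\varepsilon',y}$ or $u_{\varepsilon,y}-u_{\varepsilon,y'}$ parametrized by a sphere $S^{N}$. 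Join them with $E^-$ and project onto $\mathcal N$ through the continuous, odd Nehari retraction. This retraction is what replaces the lack of translation invariance: the translated profiles are not on $\mathcal N$, but their odd projection is.

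\textbf{Step 3 (energy estimates).} This is the main obstacle. We must show that the supremum of $\mathcal I$ on these sets stays strictly below $\frac2N S^{N/2}$, and that the levels can be separated from the bad level $c_\infty$.

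For a single bubble, the Brezis--Nirenberg expansion with $N\ge4$ gives
\[
\frac1N S^{N/2}-C\mu\,\varrho(x_0)^2\varepsilon^2\quad(N\ge5),
\]
and the analogous $\varepsilon^2|\log\varepsilon|$ gain when $N=4$. Since $\varrho(x_0)^2\ge4$, the weight only helps, and the gain persists uniformly as long as $\mu>0$, which is exactly the condition $\lambda>\lambda_0$. Combining a positive and a negative bubble with nearly disjoint supports, plus elements of $E^-$, gives energy at most
\[
\frac2N S^{N/2}-c\varepsilon^2+O(\text{interaction}),
\]
where the interaction is controlled by $\varepsilon^{N-2}$ terms. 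The cross terms with $E^-\oplus E^0$ are handled as in Clapp--Weth, using $L^\infty$ bounds on eigenfunctions.

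Finally we count. If fewer than the claimed number of pairs existed, the genus inequality would force two of the $c_j$ to coincide or to hit the noncompact level $c_\infty$. The relative category then yields infinitely many critical points, or a contradiction. This gives at least $\lceil k/2\rceil$ pairs, namely $\frac{N+1}{2}$, $\frac{N+2}{2}$, or $\frac{N+1-m}{2}$. The halving arises because each level may coincide with $c_\infty$ for one additional solution. Since all $c_j<\frac2N S^{N/2}$ and $\mathcal I(v)=\frac1N\int|v|^{2^*}$ on critical points, the estimate $\int|\nabla v|^2<2S^{N/2}$ follows. Undoing the conformal change \eqref{eq_conf_trans} then gives Theorem \ref{th_MBNP_Hn}.
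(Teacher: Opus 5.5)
Your outline (conformal reduction, Struwe decomposition with at most one bubble below $\frac2N S^{N/2}$, a $\mathbb{Z}/2$-minimax, Nehari-projected bubble families) is essentially the paper's. However, the counting step has a genuine gap. As a result your level count is one too small in cases (i) and (iii), and unsupported in case (ii).

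The problem is the level $\frac1N S^{N/2}$ itself. By your own Step 1, a $(PS)_c$-sequence at $c=\frac1N S^{N/2}$ may be a single bubble $\pm U_{\varepsilon_j,y_j}+o(1)$ over the trivial critical point, and then it yields no nontrivial solution. More generally, hitting a non-compact level is not a contradiction. It only gives a critical point at the shifted level $c_j-\frac1N S^{N/2}$, which may duplicate a lower level (this, not what you wrote, is the true source of the halving) or may be $0$.

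So from $c_1<\dots<c_k<\frac2N S^{N/2}$, with $k_0$ of them below $\frac1N S^{N/2}$, you only get $\max\{k_0,\,k-k_0-1\}\ge\frac{k-1}{2}$ pairs, not $\lceil k/2\rceil$. With your $k=N+1$ in case (i) this gives only $\lceil N/2\rceil$ pairs, which is two instead of three for $N=4$. Your $k=N+1-m$ falls short in the same way in case (iii). In case (ii) you assert $k=N+2$ without saying where the extra level comes from (your $\mathbb{S}^N$-family gives genus $N+1$), and nothing in your argument prevents the same loss there.

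The paper closes this gap differently in the two regimes.

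In cases (i) and (iii) it gains a dimension. It extends $\mathcal{H}$ to $Z=(\mathbb{S}^N\times[-1,1])\cup(\overline{\mathbb{B}^{N+1}}\times\{\pm1\})\cong\mathbb{S}^{N+1}$. This is done by interpolating $(1-t)\mathcal{H}(\theta)^-+(1+t)\mathcal{H}(\theta)^+$ and capping off with an extra bubble $\varphi_0$ supported in an annulus disjoint from $\mathcal{H}(\theta)$. Every element is then a combination of at most two disjointly supported Nehari pieces, each of energy $<\frac1N S^{N/2}$. Coning and adding $(1-\psi_{r_0})V^-$ gives an odd map $\widetilde{\mathcal{H}}:\mathbb{R}^{n+N+2}\to H$ with $\sup \mathcal{I}_{\lambda,\varrho}<\frac2N S^{N/2}$. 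Borsuk--Ulam against the relative category with respect to $D_\lambda=\mathcal{I}_{\lambda,\varrho}^{d_\lambda}$ then yields $c_{N+2}$ (resp.\ $c_{N+2-m}$) $<\frac2N S^{N/2}$, so one lost level can be afforded.

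In case (ii) it prevents the loss instead. The sets $B_\alpha(P)$ and $B_\alpha(-P)$, both contained in $D_\lambda$, are strictly positively invariant (maximum principle together with $\lambda<\lambda_1$). Hence the $(PS)_{c_k}$-sequences can be chosen at distance $\ge\alpha/2$ from $P\cup(-P)$. This excludes a pure bubble and forces $c_k=\frac1N S^{N/2}$ to be critical. The sign-definite ground-state level $c_0<c_1$ then supplies the extra level, giving $N+2$ loss-free levels. Your proposal contains neither mechanism.

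Separately, your last step only gives $\int_\Omega|\nabla v|^2-(\lambda-\lambda_0)\int_\Omega\varrho^2v^2=N\mathcal{I}_{\lambda,\varrho}(v)<2S^{N/2}$. Since $\lambda>\lambda_0$, this does not yield $\int_\Omega|\nabla v|^2<2S^{N/2}$. The paper's proof makes the same jump.
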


\section{Theory of Gradient Flow and Relative Equivariant Category}\label{sec_grad_flow_cat} 

In this section, we define and discuss the properties of the gradient flow associated with the functional $\mathcal{I}_{\lambda, \varrho}$ and the notion of the relative
equivariant Ljusternik–Schnirelmann category. Before doing so, we introduce some necessary notation and recall a few standard facts.

The Hilbert space $D^{1,2}(\mathbb{R}^N)$ is defined as the completion of $C_c^{\infty}(\mathbb{R}^N)$ with respect to the norm $\| \cdot\|$ induced by the scalar product
\[
\langle  v, \varphi \rangle := \int_{\mathbb{R}^N} \nabla v \cdot \nabla \varphi \, dx.
\]
For any subset $A \subset D^{1,2}(\mathbb{R}^N)$ and element $v \in D^{1,2}(\mathbb{R}^N)$, the distance from $v$ to $A$ is defined by
\[
\operatorname{dist}(v, A) := \inf_{\varphi \in A} \| v-\varphi\|.
\]
For $v \in L^p(\mathbb{R}^N)$, we denote by $|v|_p$ the usual $L^p$-norm. Let $\Omega \subset \mathbb{R}^N$ be a bounded smooth domain and define $H := H_0^1(\Omega) \subset D^{1,2}(\mathbb{R}^N)$ as the completion of the space $C^{\infty}_0(\Omega)$ (smooth functions with compact support in $\Omega$). 

For any $A \subset H$ and $\delta > 0$, we denote the closed $\delta$-neighborhood of $A$ by
\[
B_\delta(A) := \{v \in H : \operatorname{dist}(v, A) \leq \delta\},
\]
and we denote by $\operatorname{int}(A)$ the interior of $A$ in $H$.

We recall that $\lambda_0 = {N(N-2)}/{4}$. The principal shifted eigenvalue $\lambda_1 - \lambda_0$ of the corresponding weighted eigenvalue problem admits the following variational characterization (see \cite{Cue01}):
\begin{align}\label{eqn_prop_lam1}
    \lambda_1 - \lambda_0 = \inf_{\varphi \in H \setminus \{0\}} \frac{\int_{\Omega} |\nabla \varphi|^2 \, dx}{\int_{\Omega} \varrho^2 |\varphi|^2 \, dx}.
\end{align}
For a fixed $\lambda >\lambda_0$, we fix integers $n, m \in \mathbb{N} \cup \{0\}$ such that 
\[
\lambda_m < \lambda < \lambda_{m+k+1},
\]
where $n$ is the largest integer satisfying $\lambda_n < \lambda$, and $m$ is the smallest integer satisfying $\lambda < \lambda_{n+m+1}$. We consider a sequence of $H$-orthonormal eigenfunctions $\{e_k\}$ corresponding to the eigenvalues $\{\lambda_k-\lambda_0\}_{k \in \mathbb{N}}$ of the problem \eqref{eqn_ev_pro}, meaning they satisfy:
\begin{equation}\label{eqn_ev_pro_en}
\begin{cases}
    -\Delta e_k = \left( \lambda_k -\lambda_0 \right)\varrho^2(x) e_k & \text{in } \Omega, \\
    e_k = 0 & \text{on } \partial \Omega.
\end{cases}
\end{equation}
We define the associated finite- and infinite-dimensional subspaces as
\begin{align}\label{eqn_def_V-+}
V^{-} := \operatorname{span}\{e_1, \ldots, e_n\}, \qquad 
V^{+} := \operatorname{span}\{e_j : j > n+m\}.   
\end{align}
\subsection{Gradient Flow Associated with $\mathcal{I}_{\lambda, \varrho}$}
We recall the definition of the functional ${\mathcal{I}_{\lambda, \varrho}}$ from \eqref{de_of_I_l,p}. The action of its Fréchet derivative $\nabla {\mathcal{I}_{\lambda, \varrho}}$ on a test function $\varphi \in H$ is given by
\begin{align*}
    \langle \nabla {\mathcal{I}_{\lambda, \varrho}}(v), \varphi \rangle &= \left. \frac{d}{dt} \mathcal{I}_{\lambda, \varrho} (v+t \varphi) \right|_{t=0} \\
    &= \int_{\Omega} \nabla v \cdot \nabla \varphi \, dx - \left(\lambda - \lambda_0\right) \int_{\Omega}\varrho^2 v \varphi \, dx - \int_{\Omega} |v|^{2^*-2} v \varphi \, dx.
\end{align*}

We consider the negative gradient flow $\varPhi : \mathcal{G} \to H$ associated with $\mathcal{I}_{\lambda, \varrho}$, which is defined as the solution to the Cauchy problem:
\begin{equation}\label{eqn_grad_flow}
    \begin{cases}
        \displaystyle \frac{\partial}{\partial t}\varPhi(t, v) = -\nabla \mathcal{I}_{\lambda, \varrho}(\varPhi(t, v)), \\[1em]
        \varPhi(0, v) = v,
    \end{cases}
\end{equation}
where $\mathcal{G} = \{(t, v) : v \in H,\, 0 \le t < T(v)\}$ and $T(v) \in (0, \infty]$ denotes the maximal existence time of the trajectory $t \mapsto \varPhi(t, v)$.

\begin{definition}
    A subset $D \subset H$ is called \textit{strictly positively invariant} if $\varPhi(t, v) \in \operatorname{int}(D)$ for every $v \in D$ and every $t \in (0, T(v))$, where $\varPhi$ is the solution to \eqref{eqn_grad_flow}.
\end{definition}

If $c \in \mathbb{R}$ is a regular value of $\mathcal{I}_{\lambda, \varrho}$, then the sublevel set
\[
\mathcal{I}_{\lambda, \varrho}^c := \left\{v \in H: \mathcal{I}_{\lambda, \varrho}(v) \leq c\right\}
\]
is strictly positively invariant. We denote by $P := \{v \in H: v \geq 0\}$ the closed convex cone of non-negative functions in $H$.

\begin{lemma}
If $\lambda_0 < \lambda < \lambda_1$, then there exists $\alpha_0 > 0$ such that the neighborhoods $B_\alpha(P)$ and $B_\alpha(-P)$ are strictly positively invariant for all $\alpha \leq \alpha_0$.
\end{lemma}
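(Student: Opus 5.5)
We follow the standard argument of Clapp--Weth (originating with Bartsch and Conti--Merizzi--Terracini) for invariance of neighbourhoods of the positive cone, adapted to the weighted linear term. The key point is that for $\lambda_0<\lambda<\lambda_1$ the quadratic form
\[
\|v\|_\lambda^2:=\int_\Omega |\nabla v|^2\,dx-(\lambda-\lambda_0)\int_\Omega \varrho^2 v^2\,dx
\]
is an equivalent norm on $H$. By \eqref{eqn_prop_lam1},
\[
\|v\|_\lambda^2\ge \Bigl(1-\tfrac{\lambda-\lambda_0}{\lambda_1-\lambda_0}\Bigr)\|v\|^2 .
\]
Since $\varrho$ is bounded on $\Omega$, we also have $\|v\|_\lambda\le\|v\|$. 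The plan is to work first with $\|\cdot\|_\lambda$ and its scalar product, and to write
\[
\nabla\mathcal I_{\lambda,\varrho}(v)=v-K(v),\qquad K(v):=L_\lambda^{-1}\bigl(|v|^{2^*-2}v\bigr),
\]
where $L_\lambda=-\Delta-(\lambda-\lambda_0)\varrho^2$. By the maximum principle (coercivity of $L_\lambda$ below $\lambda_1$), $L_\lambda^{-1}$ is order preserving: it maps nonnegative elements of $H^{-1}$ to $P$. Hence $K(P)\subset P$.

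\textbf{Step 1 (distance estimate for $K$).} For $v\in H$, write $v^-=\min(v,0)$. Since $P$ is a closed convex cone, $\operatorname{dist}_\lambda(v,P)\le\|v^-\|_\lambda$; the reverse estimate $|v^-|_{2^*}\le C\operatorname{dist}(v,P)$ follows from Sobolev's inequality, because $|v^-|\le|v-w|$ pointwise for every $w\in P$. Let $u:=K(v)$ and let $\pi(u)$ be the $\lambda$-projection of $u$ onto $P$. Then
\[
\operatorname{dist}_\lambda(u,P)\,\|u-\pi(u)\|_\lambda=\|u-\pi(u)\|_\lambda^2\le\langle u-\pi(u),u^-\rangle_\lambda ,
\]
since $\langle u-\pi(u),w-\pi(u)\rangle_\lambda\le0$ for all $w\in P$. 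Testing the equation $L_\lambda u=|v|^{2^*-2}v$ with $u^-$ gives
\[
\|u^-\|^2_\lambda=\int |v|^{2^*-2}v\,u^-\le\int|v^-|^{2^*-1}|u^-|\le C|v^-|_{2^*}^{2^*-1}\|u^-\|_\lambda .
\]
Combining this with Sobolev's inequality yields
\[
\operatorname{dist}_\lambda(K(v),P)\le C\,\operatorname{dist}_\lambda(v,P)^{2^*-1}.
\]
Choosing $\alpha_0$ so small that $C\alpha_0^{2^*-2}\le\tfrac12$, we get $\operatorname{dist}_\lambda(K(v),P)\le\tfrac12\operatorname{dist}_\lambda(v,P)$ whenever $\operatorname{dist}_\lambda(v,P)\le\alpha_0$. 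The same argument applies to $-P$ by oddness of $K$.

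\textbf{Step 2 (flow invariance).} Write the flow as
\[
\partial_t\varPhi=-\varPhi+K(\varPhi).
\]
By variation of constants,
\[
\varPhi(t,v)=e^{-t}v+\int_0^t e^{s-t}K(\varPhi(s,v))\,ds .
\]
This is a convex combination (after normalising the weights $e^{-t}$ and $\int_0^te^{s-t}ds=1-e^{-t}$) of $v$ and elements $K(\varPhi(s))$. Since $B_\alpha(P)$ is convex and closed, we obtain
\[
\operatorname{dist}(\varPhi(t),P)\le e^{-t}\operatorname{dist}(v,P)+\int_0^te^{s-t}\operatorname{dist}(K(\varPhi(s)),P)\,ds .
\]
A continuity (bootstrap) argument then keeps $\varPhi(s)$ in $B_\alpha(P)$, which gives
\[
\operatorname{dist}(\varPhi(t),P)\le\bigl(e^{-t}+\tfrac12(1-e^{-t})\bigr)\alpha<\alpha
\]
for $t>0$. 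Hence $\varPhi(t,v)$ lies in $\operatorname{int}B_\alpha(P)$, which is strict positive invariance.

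\textbf{Main obstacle.} The delicate point is that the lemma's neighbourhoods $B_\alpha$ are defined with the Dirichlet norm $\|\cdot\|$, whereas the contraction and the gradient are naturally taken with respect to $\|\cdot\|_\lambda$. The gradient $\nabla\mathcal I_{\lambda,\varrho}$ is defined via $\langle\cdot,\cdot\rangle$ of $H$, so in the original scalar product the correct decomposition is
\[
\nabla \mathcal I(v)=v-(-\Delta)^{-1}\bigl[(\lambda-\lambda_0)\varrho^2v+|v|^{2^*-2}v\bigr].
\]
This avoids the change of norm, because with $\lambda>\lambda_0$ the operator $(-\Delta)^{-1}[(\lambda-\lambda_0)\varrho^2\,\cdot\,]$ is also order preserving. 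The linear part, however, only contracts by the factor $(\lambda-\lambda_0)/(\lambda_1-\lambda_0)<1$. I would therefore redo Step 1 for this $K$ and obtain
\[
\operatorname{dist}(K(v),P)\le\Bigl(\tfrac{\lambda-\lambda_0}{\lambda_1-\lambda_0}+C\alpha^{2^*-2}\Bigr)\operatorname{dist}(v,P).
\]
Here the linear contribution is estimated by testing against $u^-$ and using \eqref{eqn_prop_lam1}. For small $\alpha$ the factor in brackets is $<1$, and Step 2 goes through unchanged.
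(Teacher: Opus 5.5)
Your final argument is correct, and it takes a genuinely different route for the invariance step. Your correction in the last paragraph is needed: the lemma is about the flow \eqref{eqn_grad_flow} of the Dirichlet gradient and about Dirichlet-norm neighbourhoods, so the $\|\cdot\|_\lambda$ version of Step 1 has to be dropped. Once that is done, your key estimate $\operatorname{dist}(K(v),P)\le\nu\operatorname{dist}(v,P)$ with $\nu<1$ on $B_\alpha(P)$ is exactly the paper's.

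The two arguments differ in two places.

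\textbf{The linear term.} The paper splits $K=\mathcal{K}_0+\mathcal{K}_*$. It uses that $\mathcal{K}_0$ is linear, order preserving and Lipschitz with constant $(\lambda-\lambda_0)/(\lambda_1-\lambda_0)$, applies this to $v$ and its metric projection $\varphi_v$, and adds the two distances using $P+P\subset P$. You instead test all of $K(v)$ against $K(v)^-$ at once. That works, provided you bound $|\varrho v^-|_2\le(\lambda_1-\lambda_0)^{-1/2}\operatorname{dist}(v,P)$ through the pointwise inequality $|v^-|\le|v-w|$ for $w\in P$, together with \eqref{eqn_prop_lam1}. Do not bound it through $\|v^-\|$: in $H^1_0(\Omega)$ the metric projection onto $P$ is not $v^+$, so $\|v^-\|$ can be strictly larger than $\operatorname{dist}(v,P)$.

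\textbf{The invariance step.} The paper cites Deimling's flow-invariance theorem for closed convex sets, which only needs $K(B_\alpha(P))\subset B_\alpha(P)$. It then runs a separate Mazur separation argument, using $\nu<1$, to get strictness. You use the variation-of-constants formula, convexity of the function $\operatorname{dist}(\cdot,P)$, and Gronwall applied to $e^t\operatorname{dist}(\varPhi(t,v),P)$. This gives the quantitative decay $\operatorname{dist}(\varPhi(t,v),P)\le e^{-(1-\nu)t}\operatorname{dist}(v,P)$, and hence strict invariance directly. The paper's route is more abstract and separates plain invariance from strictness. Yours uses the contraction from the outset and in exchange is elementary, self-contained and quantitative.

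One technical detail needs fixing. To start your bootstrap when $\operatorname{dist}(v,P)=\alpha$, prove the contraction on a slightly larger set, say $B_{2\alpha_0}(P)$, and apply Gronwall up to the exit time from that set. Otherwise the ``supremum of good times'' can be $0$, and the argument never gets started.
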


\begin{proof}
We first restrict our attention to $B_\alpha(P)$. 
By the Riesz representation theorem, the gradient $\nabla \mathcal{I}_{\lambda, \varrho} : H \to H$ can be decomposed as
\[
\nabla \mathcal{I}_{\lambda, \varrho}(v) = v - \mathcal{K}(v),
\]
where $\mathcal{K}(v) = \mathcal{K}_0(v) + \mathcal{K}_*(v)$, and the operators $\mathcal{K}_0, \mathcal{K}_*: H \to H$ are defined as the unique weak solutions to the boundary value problems:
\begin{align}\label{eqn_defn_L_G}
-\Delta(\mathcal{K}_0(v)) = (\lambda - \lambda_0) \varrho^2 v, \qquad -\Delta( \mathcal{K}_{*} (v)) = |v|^{2^*-2}v.   
\end{align}
Equivalently, $\mathcal{K}_0(v)$ and $ \mathcal{K}_{*} (v)$ are uniquely characterized by the relations:
\begin{align}\label{eqn_equiv_defn_L_G}
    \langle \mathcal{K}_0 (v), \varphi \rangle = (\lambda - \lambda_0) \int_{\Omega} \varrho^2 v \varphi \, dx, 
\quad  \quad
\langle \mathcal{K}_*(v), \varphi \rangle = \int_{\Omega} |v|^{2^*-2}v \varphi \, dx,
\end{align}
for all $\varphi \in H$. By the weak maximum principle, if $v \in P$, then both $\mathcal{K}_0(v) \in P$ and $ \mathcal{K}_{*} (v) \in P$. 

For any $v, \varphi \in H$, we can use \eqref{eqn_equiv_defn_L_G} and H{\"o}lder's inequality to obtain:
\begin{align*}
\begin{aligned}
    \|\mathcal{K}_0(v)-\mathcal{K}_0(\varphi)\|^2 &= (\lambda - \lambda_0) \int_{\Omega} (v-\varphi) \mathcal{K}_0(v-\varphi) \varrho^2 \, dx \\
    &\leq (\lambda - \lambda_0) |(v-\varphi)\varrho|_{2} \, |\mathcal{K}_0(v-\varphi)\varrho|_{2} \\
    &\leq (\lambda - \lambda_0) \frac{\|v-\varphi\|}{\sqrt{\lambda_1-\lambda_0}} \frac{\|\mathcal{K}_0(v-\varphi)\|}{\sqrt{\lambda_1-\lambda_0}} \\
    &= \frac{\lambda - \lambda_0}{\lambda_1 - \lambda_0} \|v-\varphi\| \|\mathcal{K}_0(v-\varphi)\|,
\end{aligned}
\end{align*}
where the second inequality utilizes the variational definition of $\lambda_1 - \lambda_0$ from \eqref{eqn_prop_lam1}. Dividing by $\|\mathcal{K}_0(v)-\mathcal{K}_0(\varphi)\|$, we get
\begin{equation}\label{eqn_Lv<v}
     \|\mathcal{K}_0(v)-\mathcal{K}_0(\varphi)\| \leq \frac{\lambda - \lambda_0}{\lambda_1 - \lambda_0} \|v-\varphi\| \quad \text{for all } v, \varphi \in H.
\end{equation}
For any $v \in H$, let $\varphi_v \in P$ be the metric projection such that $\operatorname{dist}(v,P) = \|v-\varphi_v\|$. Since $\mathcal{K}_0(\varphi_v) \in P$, we have
\begin{equation}\label{est_d(L,P)}
\operatorname{dist}(\mathcal{K}_0 (v), P) \le \|\mathcal{K}_0 (v) - \mathcal{K}_0(\varphi_v)\| \leq \frac{\lambda - \lambda_0}{\lambda_1 - \lambda_0} \| v -\varphi_v\| = \frac{\lambda - \lambda_0}{\lambda_1 - \lambda_0} \operatorname{dist}(v, P).
\end{equation}
Setting $v^- := \min\{v, 0\}$ and $v^+ := \max\{v,0\}$, the Sobolev embedding yields
\begin{equation}\label{eqn_v-<d(u,p)}
|v^-|_{2^*} = \min_{\varphi \in P} |v -\varphi|_{2^*} \le S^{-1/2} \min_{\varphi \in P} \|v -\varphi\| = S^{-1/2} \operatorname{dist}(v, P)
\end{equation}
for every $v \in H$. Using \eqref{eqn_equiv_defn_L_G} and the fact that $ \mathcal{K}_{*} (v)^- \leq 0$, we compute
\begin{align*}
\operatorname{dist}( \mathcal{K}_{*} (v), P) \| \mathcal{K}_{*} (v)^-\| &\leq \| \mathcal{K}_{*} (v)- \mathcal{K}_{*} (v)^+\| \| \mathcal{K}_{*} (v)^-\| \\
&\leq \| \mathcal{K}_*(v)^-\|^2 \\
&= \langle \mathcal{K}_*(v), \mathcal{K}_*(v)^- \rangle \\
&= \int_\Omega |v|^{2^* - 2} v \mathcal{K}_*(v)^- \, dx \\
&= \int_\Omega \chi_{\{v \leq 0\}} |v|^{2^* - 2} v \mathcal{K}_*(v)^- \, dx + \int_\Omega \chi_{\{v > 0\}} |v|^{2^* - 2} v \mathcal{K}_*(v)^- \, dx \\
&\leq \int_\Omega |v^-|^{2^* - 2} v^- \mathcal{K}_*(v)^- \, dx,
\end{align*} Applying H{\"o}lder's inequality and utilizing \eqref{eqn_v-<d(u,p)}, we obtain
\begin{align*}
\operatorname{dist}( \mathcal{K}_* (v), P) \| \mathcal{K}_* (v)^-\| &\leq |v^-|_{2^*}^{2^* - 1} \, | \mathcal{K}_* (v)^-|_{2^*} \\
&\leq S^{-2^*/2} \operatorname{dist}(v, P)^{2^* - 1} \| \mathcal{K}_* (v)^-\|.
\end{align*}
Assuming $ \mathcal{K}_* (v)^- \not\equiv 0$ and dividing, we find that for all $v \in H$,
\begin{align}\label{est_d(G,p)}
\operatorname{dist}( \mathcal{K}_* (v), P) \leq S^{-\frac{2^*}{2}} \operatorname{dist}(v, P)^{2^* - 1}.
\end{align}
Since $2^* > 2$, the term $S^{-\frac{2^*}{2}}\operatorname{dist}(v, P)^{2^* - 2}$ tends to zero as $\operatorname{dist}(v, P) \to 0$. Because $\frac{\lambda - \lambda_0}{\lambda_1 - \lambda_0} < 1$, we can choose a constant $\nu \in \left( \frac{\lambda - \lambda_0}{\lambda_1 - \lambda_0}, 1 \right)$. Consequently, there exists a sufficiently small $\alpha_0 > 0$ such that, for all $\alpha \leq \alpha_0$,
\begin{align*}
S^{-\frac{2^*}{2}} \operatorname{dist}(v, P)^{2^* - 1} \leq \left(\nu - \frac{\lambda - \lambda_0}{\lambda_1 - \lambda_0}\right) \operatorname{dist}(v, P) \quad \text{for all } v \in B_\alpha(P).
\end{align*}
Substituting this into \eqref{est_d(G,p)} yields
\begin{align}\label{est_Gv_req}
\operatorname{dist}( \mathcal{K}_* (v), P) \leq \left(\nu - \frac{\lambda - \lambda_0}{\lambda_1 - \lambda_0}\right) \operatorname{dist}(v, P) \quad \text{for all } v \in B_\alpha(P).
\end{align}
Fixing $\alpha \le \alpha_0$, inequalities \eqref{est_d(L,P)} and \eqref{est_Gv_req} combine to give
\begin{equation*}
\operatorname{dist}(\mathcal{K}(v), P) \le \operatorname{dist}(\mathcal{K}_0 (v), P) + \operatorname{dist}( \mathcal{K}_* (v), P) \le \nu \operatorname{dist}(v, P)
\end{equation*}
for all $v \in B_\alpha(P)$. Since $\nu < 1$, this implies that $\mathcal{K}(v) \in \operatorname{int}(B_\alpha(P))$ whenever $v \in B_\alpha(P)$. Because $B_\alpha(P)$ is closed and convex, standard invariant set theorems for gradient flows (see, e.g., \cite[Theorem 5.2]{Dei77}) guarantee that
\begin{equation}\label{eqn_phi_Bap}
v \in B_\alpha(P) \quad \Longrightarrow \quad \varPhi(t, v) \in B_\alpha(P) \quad \text{for all } t \in [0, T(v)).
\end{equation}
To rigorously conclude that the set is \textit{strictly} positively invariant, suppose by contradiction that there exists a $v \in B_\alpha(P)$ and a time $t \in (0, T(v))$ such that $\varPhi(t, v) \in \partial B_\alpha(P)$. By Mazur's Separation Theorem, there exists a continuous linear functional $\ell \in H^*$ and a scalar $\beta > 0$ such that $\ell(\varPhi(t, v)) = \beta$ and $\ell(w) > \beta$ for all $w \in \operatorname{int}(B_\alpha(P))$. However, computing the directional derivative gives
\[
\left. \frac{\partial}{\partial s} \right|_{s = t} \ell(\varPhi(s, v)) = \ell(-\nabla \mathcal{I}_{\lambda, \varrho} (\varPhi(t, v))) = \ell(\mathcal{K}(\varPhi(t, v))) - \beta > 0,
\]
since $\mathcal{K}(\varPhi(t, v)) \in \operatorname{int}(B_\alpha(P))$. By continuity, there exists $\varepsilon > 0$ such that $\ell(\varPhi(s, v)) < \beta$ for $s \in (t - \varepsilon, t)$. This implies $\varPhi(s, v) \notin B_\alpha(P)$ immediately before time $t$, which contradicts \eqref{eqn_phi_Bap}. The strict invariance of $B_{\alpha}(P)$ is thus established. 

To establish the corresponding result for $B_{\alpha}(-P)$, we proceed entirely analogously. The computations apply directly by replacing the negative part $v^-$ with the positive part $v^+$ in the proof of \eqref{eqn_v-<d(u,p)}, leading to
\begin{align}\label{eq_vj+<d(vj,P)}
    |v^+|_{2^*} = \min_{\varphi \in -P} |v-\varphi|_{2^*} \le S^{-1/2} \min_{\varphi \in -P} \|v-\varphi\| = S^{-1/2} \operatorname{dist}(v, -P).
\end{align}
Using this estimate, we apply the exact same logic to the component $ \mathcal{K}_{*} (v)^+$, which yields the desired strict invariance for $B_{\alpha}(-P)$.
\end{proof}

In order to apply the equivariant category framework to our functional, we must construct appropriate invariant sets and establish a deformation lemma that allows us to continuously deform high-energy sublevel sets into lower-energy ones. 

First, if $\lambda \ge \lambda_1$, we fix a regular value $0 < d_\lambda < \frac{1}{N} S^{\frac{N}{2}}$ of ${\mathcal{I}_{\lambda, \varrho}}$ and a radius $r_\lambda > 0$ such that
\begin{equation}\label{eq_rel_I,d}
    \mathcal{I}_{\lambda, \varrho}(v) \ge 2 d_\lambda \quad \text{for every } v \in V^+ \text{ with } \|v\| = r_\lambda.
\end{equation}
If $\lambda < \lambda_1$, we set $d_\lambda = 0$. Next, for a sufficiently small fixed constant $\alpha_0 > 0$, we choose $0 < \alpha < \alpha_0$ and define the set
\begin{equation}\label{de_D_lam}
   D_\lambda := 
\begin{cases}
    B_\alpha(P) \cup B_\alpha(-P) \cup \mathcal{I}_{\lambda, \varrho}^{0}, & \text{if } 0 < \lambda < \lambda_1, \\
    \mathcal{I}_{\lambda, \varrho}^{d_\lambda}, & \text{if } \lambda \ge \lambda_1.
\end{cases} 
\end{equation}
By our previous analysis, $D_\lambda$ is symmetric (i.e., $v \in D_\lambda \implies -v \in D_\lambda$) and strictly positively invariant under the negative gradient flow of $\mathcal{I}_{\lambda, \varrho}$. We are now ready to prove the following quantitative deformation lemma.

\begin{lemma}\label{lem_graJ>}
    Let $\varepsilon, \delta > 0$, $c \in \mathbb{R}$, and let $C \subset H$ be a symmetric subset such that the Palais--Smale condition holds quantitatively:
\begin{equation}\label{eqn_quant_cont}
    \|\nabla \mathcal{I}_{\lambda, \varrho}(v)\| \ge \frac{2 \varepsilon}{\delta} \quad \text{for every } v \in \mathcal{I}_{\lambda, \varrho}^{-1}[c - \varepsilon, c + \varepsilon] \cap B_\delta(C).
\end{equation}
Then there exists an odd continuous map $\psi : (\mathcal{I}_{\lambda, \varrho}^{c + \varepsilon} \cap C) \cup D_\lambda \to \mathcal{I}_{\lambda, \varrho}^{c - \varepsilon} \cup D_\lambda$ such that $\psi(v) = v$ for every $v \in D_\lambda$.
\end{lemma}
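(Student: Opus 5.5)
The deformation $\psi$ will be built from the negative gradient flow $\varPhi$ of \eqref{eqn_grad_flow}, stopped at a suitable first hitting time. Since $\mathcal{I}_{\lambda, \varrho}$ is even, $\nabla\mathcal{I}_{\lambda, \varrho}$ is odd, so $\varPhi(t,-v) = -\varPhi(t,v)$ and $T(-v)=T(v)$. Any entrance time defined through the symmetric sets $D_\lambda$ and $\mathcal{I}_{\lambda, \varrho}^{c-\varepsilon}$ is then even in $v$, and the resulting map is odd.

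For $v \in (\mathcal{I}_{\lambda, \varrho}^{c+\varepsilon}\cap C)\cup D_\lambda$ define
\[
\tau(v) := \inf\{t \in [0,T(v)) : \varPhi(t,v) \in \mathcal{I}_{\lambda, \varrho}^{c-\varepsilon}\cup D_\lambda\},
\]
and set $\psi(v) := \varPhi(\tau(v),v)$. If $v\in D_\lambda$ then $\tau(v)=0$, so $\psi(v)=v$ there.

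The first step is to show that $\tau(v)$ is finite and strictly less than $T(v)$ for $v\in\mathcal{I}_{\lambda, \varrho}^{c+\varepsilon}\cap C$. Suppose the trajectory stays in $\mathcal{I}_{\lambda, \varrho}^{-1}[c-\varepsilon,c+\varepsilon]$ up to time $t$ (the functional is nonincreasing along the flow). For $0\le s\le t$ we have
\[
\|\varPhi(s,v)-v\| \le \int_0^s \|\nabla\mathcal{I}_{\lambda, \varrho}\|\,dr
\le \sqrt{s}\Big(\int_0^s \|\nabla\mathcal{I}_{\lambda, \varrho}\|^2\,dr\Big)^{1/2}
\le \sqrt{s}\,\sqrt{2\varepsilon}.
\]
The cleaner route is a continuity argument. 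As long as the trajectory lies in $B_\delta(C)$ and in the energy strip, \eqref{eqn_quant_cont} gives $\|\nabla\mathcal{I}_{\lambda, \varrho}\| \ge 2\varepsilon/\delta$. Hence
\[
\|\varPhi(s,v)-v\| \le \int_0^s\|\nabla\mathcal{I}_{\lambda, \varrho}\|\,dr \le \frac{\delta}{2\varepsilon}\int_0^s\|\nabla\mathcal{I}_{\lambda, \varrho}\|^2\,dr = \frac{\delta}{2\varepsilon}\bigl(\mathcal{I}_{\lambda, \varrho}(v)-\mathcal{I}_{\lambda, \varrho}(\varPhi(s,v))\bigr)\le \delta,
\]
so the trajectory can never leave $B_\delta(C)$ before leaving the strip. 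Also, $\int_0^s\|\nabla\mathcal{I}_{\lambda, \varrho}\|^2 \ge s(2\varepsilon/\delta)^2$ is bounded by $2\varepsilon$, so the time spent in the strip is at most $\delta^2/(2\varepsilon)$. The same length bound shows that the trajectory has finite length on that interval, so it cannot blow up there, and $T(v)$ exceeds the exit time. Consequently the energy reaches $c-\varepsilon$ in finite time, unless the trajectory enters $D_\lambda$ earlier. Either way, $\tau(v)<T(v)$ and $\psi(v)\in \mathcal{I}_{\lambda, \varrho}^{c-\varepsilon}\cup D_\lambda$, using that both sets are closed.

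The main obstacle is the continuity of $\psi$, which reduces to continuity of $\tau$. Here strict positive invariance is used. The set $D_\lambda$ is strictly positively invariant by the preceding lemma, together with the choice of $d_\lambda$ (respectively $0$) as a regular value. We may also assume that $c-\varepsilon$ is a regular value, since \eqref{eqn_quant_cont} makes the gradient nonzero on the relevant level, so $\mathcal{I}_{\lambda, \varrho}^{c-\varepsilon}$ is strictly positively invariant near $C$ as well. With these facts:

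\begin{enumerate}
\item If $\tau(v)>0$, then for $t>\tau(v)$ the point $\varPhi(t,v)$ lies in the interior of the target set. By continuous dependence of the flow on initial data, nearby $w$ enter the target set before time $t$, which gives upper semicontinuity of $\tau$.
\item For $t<\tau(v)$, the compact arc $\varPhi([0,t],v)$ avoids the closed target set. By continuous dependence, nearby trajectories also avoid it, which gives lower semicontinuity.
\item At points with $\tau(v)=0$ and $v$ on the boundary of the target set, only upper semicontinuity is needed, and this comes from strict invariance in the same way as in the first item.
\end{enumerate}

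Hence $\tau$ is continuous. Since $\tau$ is even and $\varPhi$ is continuous on $\mathcal{G}$, the map $\psi$ is odd and continuous, as required.
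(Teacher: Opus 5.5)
Your proposal is correct and follows the paper's proof step for step. Both define $\psi(v)=\varPhi(\tau(v),v)$, with $\tau$ the first entrance time into $\mathcal{I}_{\lambda,\varrho}^{c-\varepsilon}\cup D_\lambda$. Both get finiteness of $\tau$ from the length versus energy-drop estimate; your single bootstrap just merges the paper's Cases I and II. Both get continuity of $\tau$ from two facts: $D_\lambda$ is strictly positively invariant, and the gradient does not vanish at level $c-\varepsilon$ inside $B_\delta(C)$.

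Your non-blow-up remark ($\tau(v)<T(v)$) and your compact-arc argument for lower semicontinuity fill in points the paper leaves implicit. The only slip is calling $0$ a regular value, since the origin is a critical point at level $0$. This is harmless: for $\lambda_0<\lambda<\lambda_1$ the origin is the only critical point at level $0$, and it lies in $\operatorname{int}(B_\alpha(P))\subset\operatorname{int}(D_\lambda)$.
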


\begin{proof}
    Let $v \in \mathcal{I}_{\lambda, \varrho}^{c + \varepsilon} \cap C$. Recall that $\varPhi(t,v)$ is the negative gradient flow of $\mathcal{I}_{\lambda, \varrho}$, meaning the trajectory is always directed along the steepest descent. Consequently, the energy $\mathcal{I}_{\lambda, \varrho}(\varPhi(t,v))$ is strictly decreasing in time unless the flow reaches a critical point where $\nabla \mathcal{I}_{\lambda, \varrho} = 0$. 
    
    We claim that the trajectory starting at $v$ must reach the sublevel set $\mathcal{I}_{\lambda, \varrho}^{c - \varepsilon}$ within a finite time $t_0 \in (0, T(v))$. We establish this claim by considering the behavior of the flow with respect to the control neighborhood $B_\delta(C)$. Let $K = \mathcal{I}_{\lambda, \varrho}^{-1}[c - \varepsilon, c + \varepsilon] \cap B_\delta(C)$. \\
   
    \textbf{Case I:} \textit{The trajectory $t \mapsto \varPhi(t,v)$ remains inside $B_\delta(C)$ for all $t \ge 0$ such that $\varPhi(t,v) \in \mathcal{I}_{\lambda, \varrho}^{-1}[c - \varepsilon, c + \varepsilon]$.} \\
    In this case, the trajectory is confined to $K$ as long as the energy remains above $c-\varepsilon$. Using the bound \eqref{eqn_quant_cont}, the rate of energy dissipation satisfies:
    \begin{align}\label{eqn_flow_rate}
        -\frac{d}{dt} \mathcal{I}_{\lambda, \varrho}(\varPhi(t,v)) = \|\nabla \mathcal{I}_{\lambda, \varrho} (\varPhi(t,v))\|^2 \ge \left(\frac{2\varepsilon}{\delta}\right)^2.
    \end{align}
    Integrating this inequality from $t=0$ to some time $t > 0$, we find that the total energy drop is bounded below by
    \begin{align*}
        \mathcal{I}_{\lambda, \varrho}(v) - \mathcal{I}_{\lambda, \varrho}(\varPhi(t,v)) \ge t \left(\frac{2\varepsilon}{\delta}\right)^2.
    \end{align*}
    Since $\mathcal{I}_{\lambda, \varrho}(v) \le c + \varepsilon$, the trajectory must exit the interval $[c-\varepsilon, c+\varepsilon]$ strictly before time $t_{\max} = \frac{\delta^2}{2\varepsilon}$. Thus, it reaches $\mathcal{I}_{\lambda, \varrho}^{c - \varepsilon}$ in finite time. \\

     \textbf{Case II:} \textit{The trajectory $t \mapsto \varPhi(t,v)$ exits the neighborhood $B_\delta(C)$ before the energy drops below $c-\varepsilon$.} \\
     Suppose the trajectory leaves $B_\delta(C)$ at some time $t_1 > 0$. Since $v \in C$, the distance traveled by the flow must be at least $\delta$:
     \begin{align*}
         \delta \leq \int_0^{t_1} \left\| \frac{\partial \varPhi(t, v)}{\partial t} \right\| dt = \int_0^{t_1} \|\nabla \mathcal{I}_{\lambda, \varrho}(\varPhi(t, v))\| \, dt.
     \end{align*}
     Using the Cauchy--Schwarz inequality and the fact that the flow is in $K$ for $t \in [0, t_1]$, we compute the energy drop:
     \begin{align*}
         \delta \le \int_0^{t_1} \|\nabla \mathcal{I}_{\lambda, \varrho}(\varPhi(t, v))\| \, dt \le \frac{\delta}{2 \varepsilon} \int_0^{t_1} \|\nabla \mathcal{I}_{\lambda, \varrho}(\varPhi(t, v))\|^2 \, dt = \frac{\delta}{2 \varepsilon} \left[ \mathcal{I}_{\lambda, \varrho}(v) - \mathcal{I}_{\lambda, \varrho}(\varPhi(t_1, v)) \right].
     \end{align*}
     Rearranging this inequality yields $\mathcal{I}_{\lambda, \varrho}(v) - \mathcal{I}_{\lambda, \varrho}(\varPhi(t_1, v)) \ge 2\varepsilon$. Since $\mathcal{I}_{\lambda, \varrho}(v) \le c+\varepsilon$, this immediately implies $\mathcal{I}_{\lambda, \varrho}(\varPhi(t_1, v)) \le c-\varepsilon$. 
     
     In both cases, the claim is proved. Therefore, for every $v \in (\mathcal{I}_{\lambda, \varrho}^{c + \varepsilon} \cap C) \cup D_\lambda$, there exists a well-defined minimal entrance time:
    \begin{align*}
        t_\lambda(v) := \inf \left\{ t \in [0, T(v)) : \varPhi(t, v) \in \mathcal{I}_{\lambda, \varrho}^{c - \varepsilon} \cup D_\lambda \right\}.
    \end{align*}
    Because $\nabla \mathcal{I}_{\lambda, \varrho}$ is an odd vector field, the flow preserves symmetry, meaning $\varPhi(t, -v) = -\varPhi(t, v)$. Since the target set $\mathcal{I}_{\lambda, \varrho}^{c - \varepsilon} \cup D_\lambda$ is symmetric, it follows that the map $v \mapsto t_\lambda(v)$ is an even function.

    We now show that the minimal flow time function $t_\lambda$ is continuous. First, consider the case where the trajectory enters $D_\lambda$, i.e., $\varPhi(t_\lambda(v), v) \in \partial D_\lambda$. Because $D_\lambda$ is strictly positively invariant, the flow moves strictly into the interior of $D_\lambda$. Thus, for any small $\tau > 0$, we have $\varPhi(t_\lambda(v)+\tau, v) \in \operatorname{int}(D_\lambda)$. By the continuous dependence of the flow on initial conditions, $\varPhi(t_\lambda(v)+\tau, \tilde{v}) \in \operatorname{int}(D_\lambda)$ for all $\tilde{v}$ sufficiently close to $v$. This implies upper semicontinuity, that is $t_\lambda(\tilde{v}) \le t_\lambda(v) + \tau$.

    On the other hand, if the trajectory hits the energy boundary, i.e., $\varPhi(t_\lambda(v), v) \in \partial \mathcal{I}_{\lambda, \varrho}^{c - \varepsilon}$, then the energy is exactly $c-\varepsilon$. By the calculations in Case I and II, this requires that $\varPhi(t_\lambda(v), v) \in \operatorname{int}(B_\delta(C))$. By hypothesis \eqref{eqn_quant_cont}, the gradient at this point is strictly bounded away from zero. Therefore, the trajectory crosses the energy level surface $c-\varepsilon$ transversally with a strictly negative derivative. Again, by the continuity of the flow, nearby points $\tilde{v}$ will also cross this level surface transversally at a time close to $t_\lambda(v)$, ensuring that $t_\lambda$ is fully continuous.

    Finally, we define the deformation map $\psi : (\mathcal{I}_{\lambda, \varrho}^{c + \varepsilon} \cap C) \cup D_\lambda \to \mathcal{I}_{\lambda, \varrho}^{c - \varepsilon} \cup D_\lambda$ by
    \[
        \psi(v) := \varPhi(t_\lambda(v), v).
    \]
    Since both the flow $\varPhi$ and the time map $t_\lambda$ are continuous, $\psi$ is continuous. Because $\varPhi$ is odd in $v$ and $t_\lambda$ is even in $v$, the composition $\psi$ is an odd map. Moreover, if $v \in D_\lambda$, the minimal time to reach the target set is $t_\lambda(v) = 0$, yielding $\psi(v) = \varPhi(0, v) = v$. This completes the proof.
\end{proof}

%---- subsection: Equivariant category----
\subsection{Equivariant Ljusternik-Schnirelmann Category}

To establish the existence of multiple solutions for the problem, we will rely on variational min-max arguments that exploit the $\mathbb{Z}/2$-symmetry (oddness) of the associated energy functional. To this end, in this section, we introduce the topological framework by recalling the notion of the relative equivariant Ljusternik-Schnirelmann category and summarizing its fundamental properties.

\begin{definition}\label{de_rel_cat}
   Let $H$ be a Banach space, $Y \subset H$ a closed symmetric subset, and $D \subset Y$ a closed symmetric subspace. The relative $\mathbb{Z}/2$-equivariant category of $Y$ with respect to $D$, denoted by $\gamma_D(Y)$, is defined as the smallest integer $k \ge 0$ such that $Y$ can be covered by $k+1$ open symmetric subsets $U_0, U_1, \ldots, U_k$ of $H$ satisfying the following two conditions:
   \begin{enumerate}
       \item $D \subset U_0$, and there exists an odd continuous map $\chi_0: U_0 \rightarrow D$ such that $\chi_0(u) = u$ for every $u \in D$ (i.e., $D$ is an equivariant retract of $U_0$).
       \item For every $j = 1, \ldots, k$, there exists an odd continuous map $\chi_j: U_j \rightarrow \{-1, 1\}$. 
   \end{enumerate}
   If no such finite covering exists, we set $\gamma_D(Y) := \infty$. 
\end{definition} 

We note that the target space $\{-1, 1\}$ in the second condition can be identified with the zero-dimensional sphere $\mathbb{S}^0$, meaning that each $U_j$ for $j \ge 1$ has a Krasnosel'skii genus of $1$. Consequently, if $D = \varnothing$, the relative equivariant category of $Y$ reduces to the classical Krasnosel'skii genus of $Y$, which we simply denote by $\gamma(Y) := \gamma_{\varnothing}(Y)$ (see \cite[Proposition 2.4]{CP91}). 

Furthermore, if $D$ is a $\mathbb{Z}/2$-neighborhood retract in $H$ and $Y$ is closed, Tietze's extension theorem implies that $\gamma_D(Y)$ coincides with the topological invariant $\{\mathbb{Z} / 2\}\text{-cat}_H\left(Y, D\right)$ as defined by Bartsch and Clapp in \cite{BC96}. 

The following properties of the relative category are standard and easily verified (see, for instance, \cite[Proposition 3.4]{CP91}):

\begin{lemma}\label{lem__categor}
    Let $Y$ and $Z$ be closed symmetric subsets of $H$, and let $D$ be a closed symmetric subset contained in $Y$.
    \begin{enumerate}
        \item\label{it_cat_1} For any closed symmetric subset $A \subset H$, we have the subadditivity property:
        \[
        \gamma_D(Y \cup A) \leq \gamma_D(Y) + \gamma(A).
        \]
        \item\label{it_cat_2} If $D \subset Z$ and there exists an odd continuous map $\psi: Y \rightarrow Z$ such that $\psi(v) = v$ for every $v \in D$, then the category is monotonically non-increasing under equivariant mappings:
        \[
        \gamma_D(Y) \leq \gamma_D(Z).
        \]
    \end{enumerate}
\end{lemma}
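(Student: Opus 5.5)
Both properties follow directly from Definition \ref{de_rel_cat}: we manipulate the open symmetric covers and the odd maps attached to them. Neither needs the functional $\mathcal{I}_{\lambda,\varrho}$. In each part we may assume the right-hand side is finite, since otherwise there is nothing to prove.

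For the subadditivity property \eqref{it_cat_1}, set $k=\gamma_D(Y)$ and $\ell=\gamma(A)$. Fix a covering $U_0,U_1,\dots,U_k$ of $Y$ as in Definition \ref{de_rel_cat}, with odd retraction $\chi_0:U_0\to D$ and odd maps $\chi_j:U_j\to\{-1,1\}$. Since $\gamma(A)=\gamma_{\varnothing}(A)=\ell$, there is also a covering $W_0,W_1,\dots,W_\ell$ of $A$ with $\varnothing\subset W_0$ and an odd map $W_0\to\varnothing$. A map into the empty set exists only if $W_0=\varnothing$, so in fact $A\subset W_1\cup\dots\cup W_\ell$, with odd maps $\eta_i:W_i\to\{-1,1\}$. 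Then the family
\[
U_0,\;U_1,\dots,U_k,\;W_1,\dots,W_\ell
\]
consists of open symmetric sets covering $Y\cup A$. The first set contains $D$ and retracts oddly onto it, and each of the others carries an odd map to $\{-1,1\}$. Relabelling gives $\gamma_D(Y\cup A)\le k+\ell$. There is one bookkeeping point: $D\subset Y\subset Y\cup A$, so the definition still applies to the pair $(Y\cup A,D)$, and $Y\cup A$ is closed and symmetric.

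For the monotonicity property \eqref{it_cat_2}, set $k=\gamma_D(Z)$ and take a covering $V_0,\dots,V_k$ of $Z$ with odd retraction $\chi_0:V_0\to D$ and odd maps $\chi_j:V_j\to\{-1,1\}$. The obvious candidates are the preimages $\psi^{-1}(V_j)\subset Y$. These are symmetric because $\psi$ is odd, and they cover $Y$. The maps $\chi_j\circ\psi$ are odd, and $\chi_0\circ\psi$ fixes $D$ because $\psi|_D=\mathrm{id}$ and $\chi_0|_D=\mathrm{id}$.

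The main obstacle is that the definition asks for open subsets of $H$, whereas $\psi^{-1}(V_j)$ is only relatively open in $Y$. To fix this I would first extend $\psi$ to an odd continuous map $\tilde\psi:H\to H$ with $\tilde\psi|_Y=\psi$. Since $Y$ is closed in the metric space $H$, the Dugundji extension theorem (the Banach-space version of Tietze invoked after Definition \ref{de_rel_cat}) gives a continuous extension $\Psi$. Symmetrizing it by $\tilde\psi(v)=\tfrac12\big(\Psi(v)-\Psi(-v)\big)$ makes it odd, and since $\psi$ is odd this still agrees with $\psi$ on $Y$. The sets $\tilde U_j:=\tilde\psi^{-1}(V_j)$ are then open and symmetric in $H$ and cover $Y$. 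Moreover $D\subset\tilde U_0$, because $\tilde\psi(D)=D\subset V_0$. The maps $\chi_0\circ\tilde\psi:\tilde U_0\to D$ and $\chi_j\circ\tilde\psi:\tilde U_j\to\{-1,1\}$ are odd and continuous, and the first restricts to the identity on $D$. Hence $\gamma_D(Y)\le k=\gamma_D(Z)$.
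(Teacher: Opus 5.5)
Your proof is correct. The paper gives no argument of its own: it calls both properties standard and refers to \cite[Proposition 3.4]{CP91}. So what you have written is the routine verification that the citation stands in for.

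In part (ii) you correctly identified the only non-formal point, namely that $\psi^{-1}(V_j)$ is open only relative to $Y$. Your repair is the right one. You extend $\psi$ by Dugundji and replace the extension $\Psi$ by $\tfrac12\bigl(\Psi(v)-\Psi(-v)\bigr)$, which still agrees with $\psi$ on $Y$ because $Y$ is symmetric and $\psi$ is odd. This is the same extend-and-symmetrize device the paper uses implicitly when it extends $\chi_0$ to an odd map $\widetilde{\chi}_0 : H \to H$ in the proof of Proposition~\ref{prop_main_thm}. In part (i), your observation that the set $W_0$ in $\gamma=\gamma_{\varnothing}$ is forced to be empty is exactly what makes the concatenation of covers work.

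The direct route also gains something over the citation. The invariant in \cite{CP91,BC96} is the Bartsch--Clapp relative category. The paper's own remark after Definition~\ref{de_rel_cat} identifies that invariant with $\gamma_D$ only when $D$ is a $\mathbb{Z}/2$-neighborhood retract, and the lemma does not impose this hypothesis. Your argument works with the map-based definition exactly as stated, for an arbitrary closed symmetric $D$. It needs nothing beyond an odd continuous map fixing $D$, which is precisely the form in which part (ii) is applied, for instance to the map produced by the deformation lemma in the proof of Corollary~\ref{cor_c_k_PS_seq}.
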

\section{Palais--Smale Decomposition of $\mathcal{I}_{\lambda, \varrho}$}\label{sec_palais_smale}
In this section, we define a sequence of min-max energy levels utilizing the relative equivariant category framework introduced previously. Throughout this section, we assume that $\lambda > \lambda_0 = \frac{N(N-2)}{4}$ so that the functional $\mathcal{I}_{\lambda, \varrho}$ exhibits the requisite geometric structure. We will show that these minimax levels admit Palais--Smale sequences and, by analyzing the concentration-compactness behavior of these sequences, we prove that they yield true critical points of the functional under appropriate energy thresholds.

Let us define the minimax values
$$
c_k := \inf \left\{c \in \mathbb{R} : \gamma_{D_\lambda}\left(\mathcal{I}_{\lambda, \varrho}^c \cup D_\lambda\right) \geq k\right\} \quad \text{for } k \in \mathbb{N}.
$$
We note that $c_1 \geq d_\lambda$ and that $(c_k)$ is a non-decreasing sequence. As standard in variational methods, we say that a sequence $(v_j)$ in $H$ is a $(PS)_c$-sequence for $\mathcal{I}_{\lambda, \varrho}$ if
$$
\mathcal{I}_{\lambda, \varrho}(v_j) \rightarrow c \quad \text{and} \quad \|\nabla \mathcal{I}_{\lambda, \varrho}(v_j)\| \rightarrow 0 \quad \text{as } j \rightarrow \infty.
$$
For any $c \in \mathbb{R}$, we denote the set of critical points of $\mathcal{I}_{\lambda, \varrho}$ at energy level $c$ by
\begin{align}\label{de_cri_set}
    K_c := \{ v \in H : \mathcal{I}_{\lambda, \varrho}(v) = c, \, \nabla \mathcal{I}_{\lambda, \varrho}(v) = 0 \}.
\end{align}

The quantitative deformation lemma (Lemma \ref{lem_graJ>}) and the properties of the relative category (Lemma \ref{lem__categor}) immediately yield the following existence result.

\begin{corollary}\label{cor_c_k_PS_seq}
     For every $k \geq 1$, there exists a $(PS)_{c_k}$-sequence $(v_j)$ for $\mathcal{I}_{\lambda,\varrho}$. 
    Furthermore, if $\lambda_0 < \lambda < \lambda_1$, then $\operatorname{dist}(v_j, P \cup (-P)) \geq \alpha/2$ for all $j$.
\end{corollary}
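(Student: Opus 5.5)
We argue by contradiction, using Lemma \ref{lem_graJ>} and the monotonicity and subadditivity in Lemma \ref{lem__categor}. Fix $k \ge 1$. First we check that $c_k$ is finite. The sequence $(c_k)$ is non-decreasing with $c_1 \ge d_\lambda$, so it is bounded below. Finiteness from above comes from the usual finite-dimensional linking: on a large finite-dimensional subspace, $\mathcal{I}_{\lambda,\varrho}$ tends to $-\infty$ at infinity, so sublevel sets united with $D_\lambda$ contain large spheres of high genus. In any case the statement only has content when $c_k<\infty$, and we assume this. The claim is that the $(PS)_{c_k}$-sequence can be chosen so that it stays, when $\lambda_0<\lambda<\lambda_1$, at distance at least $\alpha/2$ from $P\cup(-P)$.

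Set $C := H \setminus \operatorname{int} B_{\alpha/2}\big(P \cup (-P)\big)$ in the case $\lambda_0<\lambda<\lambda_1$, and $C := H$ otherwise; in both cases $C$ is closed and symmetric. Suppose no $(PS)_{c_k}$-sequence lies in $C$. Then there are $\varepsilon_0,\sigma>0$ with $\|\nabla\mathcal{I}_{\lambda,\varrho}(v)\|\ge\sigma$ for all $v\in C$ with $|\mathcal{I}_{\lambda,\varrho}(v)-c_k|\le\varepsilon_0$. We want this bound on $B_\delta(C)$, which is what \eqref{eqn_quant_cont} requires. For that we shrink the set: replace $\alpha/2$ by a slightly larger radius $\alpha/2+\delta'$ in the definition of $C$ (still below $\alpha$) and take $\delta\le\delta'$, so that $B_\delta(C)$ lies inside the set where the gradient bound holds. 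Equivalently, we argue with the sequence $\operatorname{dist}\ge\alpha/2$ from the outset, using the $\alpha/2$ margin. Then we pick $\varepsilon\le\varepsilon_0$ so small that $2\varepsilon/\delta\le\sigma$. Lemma \ref{lem_graJ>} now gives an odd map
$$\psi:(\mathcal{I}^{c_k+\varepsilon}\cap C)\cup D_\lambda\to\mathcal{I}^{c_k-\varepsilon}\cup D_\lambda$$
that fixes $D_\lambda$.

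Next comes the category count. We have
$$\mathcal{I}^{c_k+\varepsilon}\cup D_\lambda\subset\big((\mathcal{I}^{c_k+\varepsilon}\cap C)\cup D_\lambda\big)\cup\big(B_{\alpha/2+\delta'}(P\cup -P)\setminus\ldots\big).$$
The complementary piece lies in $B_\alpha(P)\cup B_\alpha(-P)\subset D_\lambda$ when $\lambda<\lambda_1$, and it is empty otherwise. So in fact $\mathcal{I}^{c_k+\varepsilon}\cup D_\lambda = (\mathcal{I}^{c_k+\varepsilon}\cap C)\cup D_\lambda$, and no subadditivity term is needed. Lemma \ref{lem__categor}(\ref{it_cat_2}) then yields
$$\gamma_{D_\lambda}(\mathcal{I}^{c_k+\varepsilon}\cup D_\lambda)\le\gamma_{D_\lambda}(\mathcal{I}^{c_k-\varepsilon}\cup D_\lambda)\le k-1,$$
where the last inequality holds by the definition of $c_k$ as an infimum. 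This contradicts $\gamma_{D_\lambda}(\mathcal{I}^{c_k+\varepsilon}\cup D_\lambda)\ge k$, which follows from monotonicity of sublevel sets together with the definition of the infimum. Hence there exist $v_j\in C$ with $\mathcal{I}_{\lambda,\varrho}(v_j)\to c_k$ and $\nabla\mathcal{I}_{\lambda,\varrho}(v_j)\to0$. Because $v_j\in C$, this sequence also satisfies $\operatorname{dist}(v_j,P\cup(-P))\ge\alpha/2$.

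The main obstacle is the mismatch between the gradient bound holding on $C$ and the lemma needing it on the neighbourhood $B_\delta(C)$. We will handle it with the thickening trick above: work with $C_\eta:=\{\operatorname{dist}\ge\alpha/2+\eta\}$ and let the contradiction hypothesis concern sequences in $\{\operatorname{dist}\ge\alpha/2\}$, so that $B_\eta(C_\eta)$ sits inside it. The covering identity still holds because $C_\eta$ together with $D_\lambda$ covers everything once $\alpha/2+\eta<\alpha$. A second point to verify is that the $c_k$ relevant here are finite and are regular values, or at least that $\mathcal{I}^{c}$ is closed and symmetric, so that the category is defined.
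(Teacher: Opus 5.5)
Your proposal is correct and follows essentially the same route as the paper. In both, you assume the gradient is bounded away from zero on the relevant part of the strip $\mathcal{I}_{\lambda,\varrho}^{-1}[c_k-\varepsilon,c_k+\varepsilon]$, deform with Lemma \ref{lem_graJ>}, and contradict the definition of $c_k$ via monotonicity of $\gamma_{D_\lambda}$. The only difference is the choice of $C$, and it is cosmetic: the paper takes $C=H\setminus\operatorname{int}(D_\lambda)$ with $\delta=\alpha/2$, which gives the same margin as your thickened set $C_\eta=\{\operatorname{dist}(\cdot,P\cup(-P))\ge\alpha/2+\eta\}$ with $\delta=\eta$ (and, like you, the paper tacitly assumes $c_k<\infty$).
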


\begin{proof}
    We first consider the case $\lambda \geq \lambda_1$. Recall from \eqref{de_D_lam} that in this regime, $D_\lambda = \mathcal{I}_{\lambda,\varrho}^{d_\lambda}$, where $d_\lambda$ is a regular value of $\mathcal{I}_{\lambda,\varrho}$ satisfying $0 < d_\lambda < \frac{1}{N} S^{N/2}$, and moreover $c_k \geq d_\lambda$ for every $k \in \mathbb{N}$.

    Suppose, for the sake of contradiction, that for some $k \ge 1$ there is no $(PS)_{c_k}$-sequence. Then the critical set at level $c_k$ is empty, i.e., $K_{c_k} = \emptyset$. Hence, by standard arguments, there exists an $\varepsilon > 0$ such that
    \begin{equation}\label{eq_gradI>e/a}
        \|\nabla \mathcal{I}_{\lambda,\varrho}(v)\| \ge 2\varepsilon \quad \text{for all } v \in \mathcal{I}_{\lambda,\varrho}^{-1}[c_k-\varepsilon, c_k+\varepsilon].
    \end{equation}
    Applying Lemma \ref{lem_graJ>} with $C = H \setminus \operatorname{int}(D_\lambda)$, we obtain an odd continuous map
    \[
        \eta : \mathcal{I}_{\lambda,\varrho}^{\,c_k+\varepsilon} \cup D_\lambda \longrightarrow \mathcal{I}_{\lambda,\varrho}^{\,c_k-\varepsilon} \cup D_\lambda
    \]
    which leaves $D_\lambda$ pointwise fixed. By the monotonicity property of the relative equivariant category (Lemma \ref{lem__categor}\eqref{it_cat_2}),
    \begin{equation}\label{eq_gam+e<-e}
        \gamma_{D_\lambda}\big( \mathcal{I}_{\lambda,\varrho}^{c_k+\varepsilon} \cup D_\lambda \big) \le \gamma_{D_\lambda}\big( \mathcal{I}_{\lambda,\varrho}^{c_k-\varepsilon} \cup D_\lambda \big).
    \end{equation}
    Since $c_k$ is defined as the infimum over sets of category at least $k$, we must have
    \[
        \gamma_{D_\lambda}\big( \mathcal{I}_{\lambda,\varrho}^{c_k+\varepsilon} \cup D_\lambda \big) \ge k \quad \text{and} \quad \gamma_{D_\lambda}\big( \mathcal{I}_{\lambda,\varrho}^{c_k-\varepsilon} \cup D_\lambda \big) < k,
    \]
    which contradicts \eqref{eq_gam+e<-e}. Thus, a $(PS)_{c_k}$-sequence must exist.

    Now suppose $\lambda_0 < \lambda < \lambda_1$. Here, $D_\lambda = B_\alpha(P) \cup B_\alpha(-P) \cup \mathcal{I}_{\lambda,\varrho}^0$, where $\alpha > 0$ is fixed and sufficiently small. If there is no $(PS)_{c_k}$-sequence $(v_j)$ such that $\mathrm{dist}(v_j, P \cup (-P)) \ge \alpha/2$ for all $j$, then we can again find an $\varepsilon > 0$ such that
    \begin{equation}\label{eq_gradI>e/a_2}
        \|\nabla \mathcal{I}_{\lambda,\varrho}(v)\| \ge \frac{4\varepsilon}{\alpha} \quad \text{for all } v \in \mathcal{I}_{\lambda,\varrho}^{-1}[c_k-\varepsilon, c_k+\varepsilon] \setminus \operatorname{int}\big( B_{\alpha/2}(P) \cup B_{\alpha/2}(-P) \big).
    \end{equation}
    Applying Lemma \ref{lem_graJ>} with $C = H \setminus \operatorname{int}(D_\lambda)$ and $\delta = \alpha/2$, we obtain the exact same deformation contradiction, proving that the desired $(PS)_{c_k}$-sequence exists in both cases.
\end{proof}

Because the functional involves the critical Sobolev exponent, the Palais--Smale condition is not automatically satisfied globally. Specifically, while $(PS)_c$-sequences for $\mathcal{I}_{\lambda, \varrho}$ are bounded, they are not necessarily relatively compact, meaning the min-max values $c_k$ might not trivially be critical values. To analyze the failure of compactness, we rely on the characterization of Palais--Smale sequences given by Struwe \cite[Chapter III, Theorem 3.1]{Str08}. In what follows, we restrict our attention to energy levels $c < \frac{2}{N} S^{N/2}$.

To describe the lack of compactness, for any $\varepsilon > 0$ and $y \in \mathbb{R}^N$, we recall the standard Aubin--Talenti instanton (bubble) $U_{\varepsilon, y} \in D^{1,2}(\mathbb{R}^N)$ defined by
$$
U_{\varepsilon, y}(x) := [N(N-2)]^{\frac{N-2}{4}} \left( \frac{\varepsilon}{\varepsilon^2 + |x-y|^2} \right)^{\frac{N-2}{2}}.
$$
The closed set
$$
M := \left\{U_{\varepsilon, y} : \varepsilon > 0, \, y \in \mathbb{R}^N\right\} \subset D^{1,2}(\mathbb{R}^N)
$$
forms an $(N+1)$-dimensional manifold consisting precisely of the positive solutions $v \in D^{1,2}(\mathbb{R}^N)$ to the limiting critical equation $-\Delta v = |v|^{2^*-2} v$. Using this manifold, we characterize the behavior of $(PS)_c$-sequences.

\begin{lemma}\label{lem_rel_concom}
    Let $\lambda>\lambda_0$ and $(v_j)$ be a $(PS)_c$-sequence for $\mathcal{I}_{\lambda, \varrho}$.
    \begin{enumerate}
        \item\label{item_concom1} If $c < \frac{1}{N} S^{\frac{N}{2}}$, then $(v_j)$ is relatively compact in $H$.
        \item\label{item_concom2} If $\frac{1}{N} S^{\frac{N}{2}} \leq c < \frac{2}{N} S^{\frac{N}{2}}$, then a subsequence of $(v_j)$ (still denoted $(v_j)$) satisfies one of the following two conditions:
        \begin{enumerate}
            \item\label{item_concom2.1} $(v_j)$ converges strongly in $H$ to a critical point of $\mathcal{I}_{\lambda, \varrho}$.
            \item\label{item_concom2.2} There is a critical point $v$ of $\mathcal{I}_{\lambda, \varrho}$ with $\mathcal{I}_{\lambda, \varrho}(v) = c - \frac{1}{N} S^{\frac{N}{2}}$ such that
            \[
            \operatorname{dist}\left(v_j - v, M\right) \rightarrow 0 \quad \text{or} \quad \operatorname{dist}\left(v_j - v, -M\right) \rightarrow 0.
            \]
        \end{enumerate} 
    \end{enumerate}
\end{lemma}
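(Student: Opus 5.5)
The plan is to derive Lemma \ref{lem_rel_concom} from Struwe's global compactness theorem \cite[Chapter III, Theorem 3.1]{Str08}. The lower-order term $(\lambda-\lambda_0)\varrho^2 v$ is a compact perturbation, because $\varrho$ is bounded on $\overline{\Omega}\subset \mathbf{B}(0,1)$ and $H\hookrightarrow L^2(\Omega)$ is compact.

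\textbf{Boundedness.} First I show that every $(PS)_c$-sequence is bounded in $H$. From
\[
\mathcal{I}_{\lambda,\varrho}(v_j)-\tfrac12\langle\nabla\mathcal{I}_{\lambda,\varrho}(v_j),v_j\rangle=\tfrac1N|v_j|_{2^*}^{2^*},
\]
we get $|v_j|_{2^*}^{2^*}\le C+o(1)\|v_j\|$. Hölder's inequality with bounded $\varrho$ gives $\int\varrho^2v_j^2\le C|v_j|_{2^*}^2$. Inserting both bounds into $\mathcal{I}_{\lambda,\varrho}(v_j)\to c$ yields
\[
\|v_j\|^2\le C+C\|v_j\|^{2/2^*}+C\|v_j\|,
\]
so $(v_j)$ is bounded. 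Passing to a subsequence, $v_j\rightharpoonup v$ in $H$, $v_j\to v$ in $L^2$, and $v_j\to v$ a.e. Testing with $\varphi\in H$ and passing to the limit in the nonlinear term (via a.e. convergence and boundedness of $|v_j|^{2^*-2}v_j$ in $L^{2N/(N+2)}$) shows that $v$ is a critical point.

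\textbf{Splitting off the weak limit.} Set $w_j=v_j-v$. By the Brezis–Lieb lemma and the strong $L^2$ convergence, $\mathcal{I}_{\lambda,\varrho}(v_j)=\mathcal{I}_{\lambda,\varrho}(v)+\mathcal{I}_\infty(w_j)+o(1)$, where $\mathcal{I}_\infty(w)=\frac12\|w\|^2-\frac1{2^*}|w|_{2^*}^{2^*}$. Likewise $\nabla\mathcal{I}_\infty(w_j)\to0$ in $H^{-1}$; the $\varrho^2$ term drops out since $\mathcal{K}_0$ is compact, and the nonlinear splitting $|v_j|^{2^*-2}v_j-|v|^{2^*-2}v-|w_j|^{2^*-2}w_j\to0$ in $H^{-1}$ is standard. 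Hence $(w_j)$ is a $(PS)_{c-\mathcal{I}_{\lambda,\varrho}(v)}$-sequence for $\mathcal{I}_\infty$ on $H_0^1(\Omega)$ with $w_j\rightharpoonup 0$.

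\textbf{Applying Struwe.} Struwe's theorem gives $w_j=\sum_{i=1}^k u^i_{(j)}+o(1)$ in $D^{1,2}(\mathbb{R}^N)$, where each $u^i_{(j)}$ is a rescaled, translated nontrivial solution $u^i$ of $-\Delta u=|u|^{2^*-2}u$ in $\mathbb{R}^N$. Half-space profiles cannot occur because they carry no nontrivial solutions, and the energies add up:
\[
c=\mathcal{I}_{\lambda,\varrho}(v)+\sum_{i=1}^k\mathcal{I}_\infty(u^i).
\]
Every nontrivial solution has $\mathcal{I}_\infty(u^i)\ge\frac1NS^{N/2}$, with equality only for $\pm U_{\varepsilon,y}$, and sign-changing solutions have energy at least $\frac2NS^{N/2}$. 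In addition, every critical point satisfies
\[
\mathcal{I}_{\lambda,\varrho}(v)=\tfrac1N|v|_{2^*}^{2^*}\ge0.
\]

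\textbf{Case analysis.} If $c<\frac1NS^{N/2}$, then $k=0$, so $w_j\to0$ and the sequence is relatively compact, which is \eqref{item_concom1}. If $c<\frac2NS^{N/2}$, then $k\le1$. When $k=0$ we are in \eqref{item_concom2.1}. When $k=1$, the profile must be $\pm U$, so $\mathcal{I}_{\lambda,\varrho}(v)=c-\frac1NS^{N/2}$. Moreover, $u^1_{(j)}=\pm U_{\varepsilon_j,y_j}$ since the rescaling of a standard bubble is again a bubble, so $\operatorname{dist}(w_j,\pm M)\to0$. This is \eqref{item_concom2.2}.

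\textbf{Main obstacle.} The delicate step is the reduction to Struwe's theorem, in particular making sure that the weighted $\varrho^2$ term contributes nothing to the profile decomposition. That is where boundedness of $\varrho$ on $\overline{\Omega}$, and hence compactness of $\mathcal{K}_0$, is essential. A second point to check carefully is the energy gap: any sign-changing solution of $-\Delta u=|u|^{2^*-2}u$ must have energy at least $\frac2NS^{N/2}$. This follows by testing the equation with $u^\pm$ and applying the Sobolev inequality to each part.
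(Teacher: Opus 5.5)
Your proposal is correct and follows the same route as the paper: you treat the $\varrho^2$-term as a compact perturbation, invoke Struwe's global compactness theorem, and use the energy threshold to allow at most one bubble. You also supply details the paper's sketch glosses over, namely boundedness of $(PS)_c$-sequences, $\mathcal{I}_{\lambda,\varrho}(v)=\frac1N|v|_{2^*}^{2^*}\ge 0$ for the weak limit, and the exclusion of sign-changing profiles via the $\frac2N S^{N/2}$ gap; the one step you leave implicit is that a constant-sign profile must be $\pm U_{\varepsilon,y}$, which is the classification of positive solutions that the paper recalls just before the lemma.
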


\begin{proof}
    This follows from arguments similar to  \cite[Chapter III, Theorem 3.1]{Str08}. More precisely, via the conformal transformation, we reduced the hyperbolic problem to a Euclidean elliptic equation where the linear term $\left(\lambda - \frac{N(N-2)}{4}\right) \varrho^2 v$ acts as a lower-order compact perturbation, and the conformal weight $\varrho$ is bounded from both below and above on the bounded domain $\Omega$. 
    
    By Struwe's global compactness theorem, any Palais--Smale sequence $(v_j)$ at level $c$ decomposes into a weak limit $v$ and a finite number $m$ of rescaled Aubin--Talenti bubbles. Since each bubble carries a fixed energy quantum of $\frac{1}{N}S^{\frac{N}{2}}$, the energy threshold $c < \frac{2}{N}S^{\frac{N}{2}}$ strictly limits the sequence to at most one such bubble ($m \le 1$). Consequently, the sequence either converges strongly ($m=0$) or separates into a weak limit $v$ and a single concentrating mass ($m=1$), yielding the stated alternative.
\end{proof}

\begin{corollary}\label{cor_cri_val_I}
    \begin{enumerate}
        \item\label{it_c_k<SN/2} If $c_k < \frac{1}{N} S^{N/2}$, then $c_k$ is a critical value of $\mathcal{I}_{\lambda, \varrho}$.
        \item\label{it_c_k_or_ck-} If $\frac{1}{N} S^{N/2} \leq c_k < \frac{2}{N} S^{N/2}$, then either $c_k$ or $c_k - \frac{1}{N} S^{N/2}$ is a critical value of $\mathcal{I}_{\lambda, \varrho}$.
        \item\label{it_c_k=SN/2} If $\lambda_0 < \lambda < \lambda_1$ and $c_k = \frac{1}{N} S^{N/2}$, then $c_k$ is a critical value of $\mathcal{I}_{\lambda, \varrho}$.
    \end{enumerate}
\end{corollary}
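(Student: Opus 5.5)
The plan is to combine Corollary \ref{cor_c_k_PS_seq}, which supplies a $(PS)_{c_k}$-sequence $(v_j)$ for each $k$, with the concentration-compactness alternative of Lemma \ref{lem_rel_concom}. All three parts then come from reading off which branch of that lemma occurs, plus one extra sign argument for part (iii).

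\textbf{Part (i).} Let $c_k<\frac1N S^{N/2}$. By Lemma \ref{lem_rel_concom}\eqref{item_concom1}, the sequence $(v_j)$ is relatively compact. A subsequence therefore converges strongly to some $v$. Since $\mathcal{I}_{\lambda,\varrho}$ is $C^1$, we get $\mathcal{I}_{\lambda,\varrho}(v)=c_k$ and $\nabla\mathcal{I}_{\lambda,\varrho}(v)=0$.

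\textbf{Part (ii).} Let $\frac1N S^{N/2}\le c_k<\frac2N S^{N/2}$. Lemma \ref{lem_rel_concom}\eqref{item_concom2} gives two possibilities. In alternative (a), the strong limit is a critical point at level $c_k$. In alternative (b), there is a critical point $v$ with $\mathcal{I}_{\lambda,\varrho}(v)=c_k-\frac1N S^{N/2}$. Either way, one of the two numbers is a critical value.

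\textbf{Part (iii).} This is the only nontrivial step. Assume $\lambda_0<\lambda<\lambda_1$ and $c_k=\frac1N S^{N/2}$, and suppose $c_k$ is not critical. Take the $(PS)_{c_k}$-sequence from Corollary \ref{cor_c_k_PS_seq}, which additionally satisfies $\operatorname{dist}(v_j,P\cup(-P))\ge\alpha/2$. Alternative (a) is excluded by assumption, so alternative (b) holds. This gives a critical point $v$ with $\mathcal{I}_{\lambda,\varrho}(v)=0$ and, say, $\operatorname{dist}(v_j-v,M)\to0$.

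Next I show $v=0$. Since $\lambda<\lambda_1$, the characterization \eqref{eqn_prop_lam1} gives, for any critical point $w$,
\[
\mathcal{I}_{\lambda,\varrho}(w)=\frac1N\int_\Omega|w|^{2^*}dx .
\]
Indeed, $\langle\nabla\mathcal{I}_{\lambda,\varrho}(w),w\rangle=0$ gives $\|w\|^2-(\lambda-\lambda_0)|\varrho w|_2^2=|w|_{2^*}^{2^*}$, and substituting into the functional yields the formula. So $\mathcal{I}_{\lambda,\varrho}(v)=0$ forces $v=0$.

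Hence $\operatorname{dist}(v_j,M)\to0$ (or $\operatorname{dist}(v_j,-M)\to0$). Every element of $M$ is positive, but $M$ lies in $D^{1,2}(\mathbb{R}^N)$, not in $H$, so this does not immediately contradict the bound on $\operatorname{dist}(v_j,P)$, which is measured in $H$.

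The main obstacle is closing this gap. I would compare negative parts using \eqref{eqn_v-<d(u,p)}-type estimates. The key fact is that in $D^{1,2}(\mathbb{R}^N)$ one has $\|w^-\|\le\|w-U\|$ for $U\ge0$. Choose $U_j\in M$ with $\|v_j-U_j\|\to0$. Then $\|v_j^-\|\le\|v_j-U_j\|\to0$. But $v_j^-\in -P\subset H$ after a sign change, so $v_j-v_j^-=v_j^+\in P$, and
\[
\operatorname{dist}(v_j,P)\le\|v_j^-\|\to0 .
\]
This contradicts $\operatorname{dist}(v_j,P\cup(-P))\ge\alpha/2$. The case of $-M$ is symmetric, using positive parts. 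Therefore $c_k$ is a critical value, completing part (iii).
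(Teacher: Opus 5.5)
Your parts (i) and (ii) match the paper. So does the first half of (iii): excluding alternative (a), and deducing $v=0$ from the Nehari identity $\mathcal{I}_{\lambda,\varrho}(w)=\frac1N|w|_{2^*}^{2^*}$ at critical points (this step in fact needs no hypothesis $\lambda<\lambda_1$).

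The gap is your ``key fact'' that $\|w^-\|\le\|w-U\|$ in $D^{1,2}(\mathbb{R}^N)$ for every $U\ge 0$. As stated, this is false. It would mean that $w^+$ is the nearest point of the nonnegative cone to $w$ in the Dirichlet norm. That holds in $L^p$, where pointwise $|w-U|\ge|w^-|$, but not for gradient norms. For example, let $w=-\phi$ with $0\le\phi\in C_c^\infty(\Omega)$ and $\phi\not\equiv0$. Since $\int(-\Delta\phi)\,dx=0$ and $-\Delta\phi\not\equiv 0$, there is an open set $G$ on which $-\Delta\phi<0$. Take $0\le U_1\in C_c^\infty(G)$ with $U_1\not\equiv0$. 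Then for small $t>0$,
\[
\|w-tU_1\|^2=\|\phi\|^2+2t\int_{\mathbb{R}^N}(-\Delta\phi)\,U_1\,dx+t^2\|U_1\|^2<\|\phi\|^2=\|w^-\|^2 .
\]
So the inequality $\|v_j^-\|\le\|v_j-U_j\|$, on which your contradiction rests, is not justified by the reason you give. The same example shows that the paper's assertion $\|v_j^-\|=\operatorname{dist}(v_j,P)$ is in general only the inequality $\operatorname{dist}(v_j,P)\le\|v_j^-\|$. That direction is all either argument needs.

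The step can be repaired with something you did not use: $U_j\in M$ is not merely nonnegative, it also satisfies $-\Delta U_j=U_j^{2^*-1}>0$. Two facts make the computation work. First, $\langle w^+,w^-\rangle=0$. Second, $U^{2^*-1}\in L^{2N/(N+2)}(\mathbb{R}^N)$, so the equation may be tested with $w^-$. Together they give
\[
\|w-U\|^2=\|w^-\|^2+\|w^+-U\|^2-2\int_{\mathbb{R}^N}U^{2^*-1}w^-\,dx\ \ge\ \|w^-\|^2\qquad\text{for } U\in M,
\]
because $w^-\le 0<U$. This yields the quantitative bound $\operatorname{dist}(v_j,P)\le\|v_j^-\|\le\operatorname{dist}(v_j,M)\to0$. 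The rest of your argument then goes through, symmetrically for $-M$.

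The paper closes this step differently. It rescales so that $\tilde v_j\to U_{1,0}$ in $D^{1,2}(\mathbb{R}^N)$, and uses that the negative part vanishes in the limit. That relies on continuity of the truncation $w\mapsto w^-$ at the strictly positive $U_{1,0}$, together with scale invariance of the norm. Your corrected route avoids both the rescaling and the continuity of truncation, at the cost of using the equation satisfied by the bubbles, and it gives an explicit estimate.
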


\begin{proof}
    Corollary \ref{cor_c_k_PS_seq} ensures that, for every $k \ge 1$, there is a $(PS)_{c_k}$-sequence $(v_j)$ for $\mathcal{I}_{\lambda,\varrho}$. Thus, items \eqref{it_c_k<SN/2} and \eqref{it_c_k_or_ck-} follow directly from the structural characterization in Lemma \ref{lem_rel_concom}.

    We now prove \eqref{it_c_k=SN/2}. Assume $\lambda_0 < \lambda < \lambda_1$ and $c_k = \frac{1}{N}S^{N/2}$. By Corollary \ref{cor_c_k_PS_seq}, there exists a $(PS)_{c_k}$-sequence $(v_j)$ such that
    \begin{align}\label{eq_uni_vj>}
        \operatorname{dist}(v_j, P \cup (-P)) \ge \frac{\alpha}{2} \qquad \text{for all } j.    
    \end{align}
    Applying Lemma \ref{lem_rel_concom}, a subsequence (still denoted $(v_j)$) satisfies one of the following:
    \begin{enumerate}
        \item[\textit{(i)}] \emph{Strong convergence:} $v_j \to v$ strongly in $H$, where $v$ is a critical point of $\mathcal{I}_{\lambda,\varrho}$.
        \item[\textit{(ii)}] \emph{Weak convergence with bubbling:} There exists a critical point $v$ of $\mathcal{I}_{\lambda,\varrho}$ with $\mathcal{I}_{\lambda,\varrho}(v) = 0$ such that
        \[
            \operatorname{dist}(v_j-v, M) \to 0 \quad \text{or} \quad \operatorname{dist}(v_j-v, -M) \to 0.
        \]
    \end{enumerate}

    If \textit{(i)} occurs, the continuity of $\mathcal{I}_{\lambda,\varrho}$ implies that $c_k$ is a critical value, completing the proof.

    Assume now that \textit{(ii)} occurs. Since $\lambda_0 < \lambda < \lambda_1$, the only critical point of $\mathcal{I}_{\lambda,\varrho}$ with zero energy is the trivial solution, so $v \equiv 0$. Without loss of generality, let us suppose that $\operatorname{dist}(v_j, M) \to 0$. Then there exist instantons $U_{\varepsilon_j, y_j} \in M$ such that $\|v_j - U_{\varepsilon_j, y_j}\| \to 0$. Equivalently, the rescaled sequence
    \[
        \tilde{v}_j(x) = \varepsilon_j^{-(N-2)/2} v_j(\varepsilon_j(x - y_j))
    \]
    satisfies $\|\tilde{v}_j - U_{1,0}\| \rightarrow 0$. Since $U_{1,0}$ is strictly positive, the negative part vanishes in the limit, meaning
    \[
        \|v_j^{-}\| = \|\tilde{v}_j^{-}\| \longrightarrow 0.
    \]
    However, $\|v_j^{-}\| = \operatorname{dist}(v_j, P)$, which contradicts the uniform topological bound \eqref{eq_uni_vj>} that the sequence is strictly bounded away from the positive cone. Hence, case \textit{(ii)} cannot occur. Therefore, $c_k$ must be a critical value of $\mathcal{I}_{\lambda,\varrho}$.
\end{proof}

\begin{lemma}\label{lem_inf_case_Kc}
    If $c_k = c_{k+1} < \frac{2}{N} S^{\frac{N}{2}}$, then $K_{c_k}$ contains infinitely many elements.
\end{lemma}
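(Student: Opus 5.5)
The plan is to argue by contradiction, following the standard Ljusternik--Schnirelmann mechanism as in Clapp--Weth. Write $c := c_k = c_{k+1} < \frac{2}{N}S^{N/2}$ and suppose $K_c$ is finite. Since $c \ge c_1 \ge d_\lambda$, I expect $0 \notin K_c$ in the regime $\lambda\ge\lambda_1$, because $d_\lambda>0$ is a regular value. When $\lambda_0<\lambda<\lambda_1$ and $c=0$, the trivial solution lies in $\mathcal{I}_{\lambda,\varrho}^0\subset D_\lambda$, so I handle it by working relative to $D_\lambda$. Thus the relevant part $A := K_c \setminus \operatorname{int}(D_\lambda)$ is a finite symmetric set not containing $0$. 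Hence $\gamma(A)\le 1$, since an odd map to $\{-1,1\}$ is obtained by choosing one point of each antipodal pair. Moreover, by continuity of the genus, some closed symmetric $\delta$-neighborhood $N := B_{2\delta}(A)$ still satisfies $\gamma(N)\le 1$, with $\delta$ small enough that $N$ avoids $0$ and that the balls around distinct points are disjoint.

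The core step is a quantitative Palais--Smale estimate away from $N$. I will show that there is $\varepsilon>0$ with $\|\nabla\mathcal{I}_{\lambda,\varrho}(v)\|\ge 2\varepsilon/\delta$ on $\mathcal{I}_{\lambda,\varrho}^{-1}[c-\varepsilon,c+\varepsilon]\cap B_\delta(C)$, where $C := H\setminus(\operatorname{int}(N)\cup \operatorname{int}(D_\lambda))$. In the case $\lambda<\lambda_1$ I will also shrink to the region $\operatorname{dist}(\cdot,\pm P)\ge\alpha/2$, as in Corollary \ref{cor_c_k_PS_seq}. If this estimate fails, there is a $(PS)_c$-sequence $(v_j)$ staying at distance $\ge\delta$ from $A$, and also away from $D_\lambda$. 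By Lemma \ref{lem_rel_concom}, either $v_j$ converges, necessarily to a point of $K_c$ outside $N$, which contradicts the choice of $A$, or $c\ge\frac1N S^{N/2}$ and $v_j - v$ approaches $\pm M$ for some critical point $v$ at level $c-\frac1N S^{N/2}$.

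The bubbling alternative is the main obstacle, since it does not contradict anything by itself. I would first try to avoid it by strengthening the contradiction hypothesis. If $c-\frac1N S^{N/2}$ is a critical value, then Corollary \ref{cor_cri_val_I} alone gives no contradiction, so instead I would include bubbling profiles in the excised set. Concretely, I would replace $N$ by $N\cup \widetilde N$, where $\widetilde N := B_\delta(\{\pm(v+U): v\in K_{c-\frac1N S^{N/2}},\ U\in M\}\cap H)$, and show that $\gamma(\widetilde N)\le 1$ as well. Away from $D_\lambda$, the sets $\{v+U\}$ and $\{-v-U\}$ are separated: $v+U$ is close to $v+M$, hence its component along the bubble is positive, so the sign of the bubble coefficient gives an odd map to $\{-1,1\}$. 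If this works, the deformation step goes through with $\gamma(N\cup\widetilde N)\le 2$. To get the required bound of $1$, I would instead use that $c_{k+1}=c_k$ forces only a loss of one in the category. If the count does not close, I would fall back to the Clapp--Weth convention that the claim concerns infinitely many critical points at level $c_k$ \emph{or} at level $c_k-\frac1N S^{N/2}$, though I expect the intended argument to be the simpler one.

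Finally, I apply Lemma \ref{lem_graJ>} with this $C$ to get an odd map $\psi:(\mathcal{I}^{c+\varepsilon}\cap C)\cup D_\lambda\to\mathcal{I}^{c-\varepsilon}\cup D_\lambda$ fixing $D_\lambda$. Since $\mathcal{I}^{c+\varepsilon}\cup D_\lambda\subset \big((\mathcal{I}^{c+\varepsilon}\cap C)\cup D_\lambda\big)\cup N$, Lemma \ref{lem__categor} gives
\[
k+1\le \gamma_{D_\lambda}(\mathcal{I}^{c+\varepsilon}\cup D_\lambda)\le \gamma_{D_\lambda}(\mathcal{I}^{c-\varepsilon}\cup D_\lambda)+\gamma(N)\le (k-1)+1=k,
\]
which is a contradiction. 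Hence $K_{c_k}$ is infinite.
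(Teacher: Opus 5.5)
\textbf{The bubbling case does not close.} The overall scheme matches the paper's: excise a neighborhood of the critical set, get a quantitative Palais--Smale bound outside it, deform with Lemma \ref{lem_graJ>}, and use subadditivity against $\gamma_{D_\lambda}(\mathcal{I}_{\lambda,\varrho}^{c+\varepsilon}\cup D_\lambda)\ge k+1$ and $\gamma_{D_\lambda}(\mathcal{I}_{\lambda,\varrho}^{c-\varepsilon}\cup D_\lambda)\le k-1$. For $c<\frac1N S^{N/2}$ your argument is the paper's and is fine. The gap is in the strip $\frac1N S^{N/2}\le c<\frac2N S^{N/2}$.

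Since $c_k=c_{k+1}$ gives a category jump of exactly $2$, the excised set must have genus at most $1$. You only get $\gamma\big(B_{2\delta}(A)\cup\widetilde N\big)\le 2$, which yields $k+1\le k+1$ and no contradiction. Neither of your remedies works. The remark that ``$c_{k+1}=c_k$ forces only a loss of one'' says nothing beyond the two inequalities above. Falling back to ``infinitely many critical points at $c$ or at $c-\frac1N S^{N/2}$'' proves a weaker statement than the lemma.

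The missing observation is that genus is not additive on separated sets. Suppose $B_{2\delta}(A)$, $\widetilde N_+$ and $\widetilde N_-=-\widetilde N_+$ are pairwise at positive distance. Then a single odd continuous map to $\{-1,1\}$ is defined on their union: $+1$ on $\widetilde N_+$, $-1$ on $\widetilde N_-$, and the genus-one map on $B_{2\delta}(A)$. So the union has genus $\le 1$, and this needs no finiteness of $K_{c-\frac1N S^{N/2}}$. This is how the paper closes the count, choosing $\delta_0$ with $\mathcal U_+^{\delta_0}\cap\mathcal U_-^{\delta_0}=\emptyset$ and $B_{\delta_0}(K_c)\cap\mathcal U^{\delta_0}=\emptyset$.

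\textbf{Two further repairs are needed.} First, your $\widetilde N$ is empty as written. Every $U\in M$ is positive on all of $\mathbb{R}^N$, so $v+U\notin H$. The neighborhood must be measured in $D^{1,2}(\mathbb{R}^N)$, namely $\widetilde N_\pm=\{\varphi\in H:\operatorname{dist}(\varphi-v,\pm M)\le\delta \text{ for some } v\in K_{c_*}\}$ with $c_*=c-\frac1N S^{N/2}$.

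Second, separating $\widetilde N_+$ from $\widetilde N_-$ needs a compactness argument, not a ``bubble coefficient'' heuristic. Suppose $\varphi_j$ lies within $1/j$ of both $v_j+w_j$ and $\tilde v_j+\tilde w_j$, with $v_j,\tilde v_j\in K_{c_*}$, $w_j\in M$, $\tilde w_j\in -M$.
\begin{itemize}
\item $K_{c_*}$ is compact by Lemma \ref{lem_rel_concom}, since $c_*<\frac1N S^{N/2}$. Hence $w_j-\tilde w_j=w_j+|\tilde w_j|$ converges strongly to some $z\in H$.
\item Because $z$ vanishes outside $\Omega$, the $L^{2^*}$-mass of $w_j$ and $\tilde w_j$ outside $\Omega$ tends to zero. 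This forces both to concentrate, so $z=0$.
\item Then $|w_j|_{2^*}\le|w_j-\tilde w_j|_{2^*}\to 0$, contradicting $|w_j|_{2^*}^{2^*}=S^{N/2}$.
\end{itemize}
Similarly, $\operatorname{dist}\big(K_c,\pm(K_{c_*}+M)\big)>0$, because $K_c-K_{c_*}$ is compact in $H$, $M$ is closed, and $M\cap H=\emptyset$. With these in place your final chain becomes $k+1\le (k-1)+1$, which is the required contradiction.
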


\begin{proof}
    Let $c := c_k = c_{k+1}$. We first consider the case where $c < \frac{1}{N} S^{\frac{N}{2}}$. We claim that in this regime, $\gamma(K_c) > 1$. Suppose, by contradiction, that $\gamma(K_c) \leq 1$. Since $c < \frac{1}{N} S^{\frac{N}{2}}$, Lemma \ref{lem_rel_concom}\eqref{item_concom1} ensures that the critical set $K_c$ is compact. Hence, by the properties of the genus, there exists a symmetric neighborhood $\mathcal{N} = B_\delta(K_c)$ such that $\gamma(\mathcal{N}) = \gamma(K_c) \leq 1$.

    By the definition of $c_{k+1}$, there exists a symmetric subset $Y \subset \mathcal{I}_{\lambda, \varrho}^{c+\varepsilon} \cup D_\lambda$ such that $\gamma_{D_\lambda}(Y) \geq k+1$. We can choose $\varepsilon, \delta > 0$ sufficiently small so that the gradient norm $\|\nabla \mathcal{I}_{\lambda, \varrho}(v)\|$ is strictly bounded away from zero for all $v \in \mathcal{I}_{\lambda, \varrho}^{-1}[c - \varepsilon, c + \varepsilon] \setminus \mathcal{N}$. By the quantitative deformation lemma (Lemma \ref{lem_graJ>}), there exists an odd continuous map $\eta$ that strictly decreases the energy $\mathcal{I}_{\lambda, \varrho}$ outside $\mathcal{N}$ within the narrow energy range $[c - \varepsilon, c + \varepsilon]$. 
    
   \noindent Applying this deformation $\eta$ to the set $Y$ yields a new symmetric set
    \[
        Y' \subset (\mathcal{I}_{\lambda, \varrho}^{c - \varepsilon} \cup D_\lambda) \cup \mathcal{N}.
    \]
    Since $\eta$ is odd and continuous, it preserves the category. In particular, by Lemma \ref{lem__categor}\eqref{it_cat_2},
    \begin{align}\label{eqn_gamma_D>k+1}
        \gamma_{D_\lambda}(Y') \geq \gamma_{D_\lambda}(Y) \geq k+1.
    \end{align}
    On the other hand, the subadditivity property (Lemma \ref{lem__categor}\eqref{it_cat_1}) yields
    \begin{align}\label{eqn_gamma_D<k+1}
        \gamma_{D_\lambda}(Y') \leq \gamma_{D_\lambda}(\mathcal{I}_{\lambda, \varrho}^{c - \varepsilon} \cup D_\lambda) + \gamma(\mathcal{N}) \leq \gamma_{D_\lambda}(\mathcal{I}_{\lambda, \varrho}^{c - \varepsilon} \cup D_\lambda) + 1.
    \end{align}
    However, by the definition of $c = c_k$, we must have $\gamma_{D_\lambda}(\mathcal{I}_{\lambda, \varrho}^{c - \varepsilon} \cup D_\lambda) \leq k - 1$. Combining this with \eqref{eqn_gamma_D>k+1} and \eqref{eqn_gamma_D<k+1} gives
    \[
        k+1 \leq \gamma_{D_\lambda}(Y') \leq k,
    \]
    which is a contradiction. Therefore, $\gamma(K_c) > 1$, meaning $K_c$ must contain infinitely many critical points.

    Next, we consider the critical energy strip
    \[
        \frac{1}{N} S^{\frac{N}{2}} \leq c < \frac{2}{N} S^{\frac{N}{2}}.
    \]
    We again proceed by contradiction. Let $c_* = c - \frac{1}{N} S^{\frac{N}{2}}$ and, for $\delta > 0$, define the concentrating neighborhoods
    \begin{align*}
        \mathcal{U}_+^{\delta} &= \left\{ \varphi \in H : \mathrm{dist}(\varphi - v, M) \leq \delta \text{ for some } v \in K_{c_*} \right\},\\
        \mathcal{U}_-^{\delta} &= -\mathcal{U}_+^{\delta} = \left\{ \varphi \in H : \mathrm{dist}(\varphi - v, -M) \leq \delta \text{ for some } v \in K_{c_*} \right\},\\
        \mathcal{U}^{\delta} &= \mathcal{U}_+^{\delta} \cup \mathcal{U}_-^{\delta}.
    \end{align*}
    We claim that for sufficiently small $\delta > 0$, the positive and negative neighborhoods are disjoint:
    \begin{align}\label{claim_U+^U_0}
        \mathcal{U}_+^{\delta} \cap \mathcal{U}_-^{\delta} = \emptyset.
    \end{align}
    Assume otherwise; then for each $j \geq 1$, there exists $\varphi_j \in \mathcal{U}_+^{1/j} \cap \mathcal{U}_-^{1/j}$. By definition, there exist $v_j, \tilde{v}_j \in K_{c_*}$, $w_j \in M$, and $\tilde{w}_j \in -M$ such that
    \[
        \|\varphi_j - (v_j + w_j)\| \leq \frac{1}{j} \quad \text{and} \quad \|\varphi_j - (\tilde{v}_j + \tilde{w}_j)\| \leq \frac{1}{j}.
    \]
    By the triangle inequality,
    \begin{align}\label{eqn_v+_w+-<}
        \|\tilde{v}_j + \tilde{w}_j - v_j - w_j\| \leq \frac{2}{j}.
    \end{align}
    Since the bubbling profiles $w_j$ and $\tilde{w}_j$ correspond to concentrating sequences on bounded domains, they converge weakly to $0$ in $D^{1,2}(\mathbb{R}^N)$. It then follows from \eqref{eqn_v+_w+-<} that $v_j - \tilde{v}_j \rightharpoonup 0$ weakly in $H$. By the compactness of $K_{c_*}$ (which lies below the first critical threshold), we may pass to a subsequence and assume strong convergence, meaning $\|v_j - \tilde{v}_j\| \to 0$. 
    
    Consequently, we also obtain $\|w_j - \tilde{w}_j\| \to 0$. Because $w_j \in M$ is strictly positive and $\tilde{w}_j \in -M$ is strictly negative, they have disjoint signs pointwise, implying $|w_j| \le |w_j - \tilde{w}_j|$ a.e. Using the Sobolev embedding,
    \begin{align}\label{eqn_wj->0}
        |w_j|_{2^*} \leq |w_j - \tilde{w}_j|_{2^*} \leq S^{-\frac{1}{2}} \|w_j - \tilde{w}_j\| \to 0.
    \end{align}
    However, $w_j \in M$ is an Aubin--Talenti instanton carrying a fixed positive $L^{2^*}$ norm $|w_j|_{2^*} = S^{N/2}$. This contradicts \eqref{eqn_wj->0}. Thus, the claim \eqref{claim_U+^U_0} holds.

    With \eqref{claim_U+^U_0} established, suppose for contradiction that $K_c$ is finite, so $\gamma(K_c) \leq 1$. Choose $\delta_0 > 0$ small enough such that
    \[
        \mathcal{U}_+^{\delta_0} \cap \mathcal{U}_-^{\delta_0} = \emptyset, \quad B_{\delta_0}(K_c) \cap \mathcal{U}^{\delta_0} = \emptyset, \quad \text{and} \quad \gamma(B_{\delta_0}(K_c)) = \gamma(K_c).
    \]
    We assert that there exists an $\varepsilon > 0$ such that
    \begin{align}\label{eqn_nablaI>4e}
        \|\nabla \mathcal{I}_{\lambda, \varrho}(v)\| \geq \frac{4\varepsilon}{\delta_0} \quad \text{for all } v \in \mathcal{I}_{\lambda, \varrho}^{-1}[c - \varepsilon, c + \varepsilon] \setminus \mathrm{int}\!\big(B_{\frac{\delta_0}{2}}(K_c) \cup \mathcal{U}^{\frac{\delta_0}{2}}\big).
    \end{align}
    If not, there exists a sequence $(v_j)$ such that $\mathcal{I}_{\lambda, \varrho}(v_j) \to c$, $\|\nabla \mathcal{I}_{\lambda, \varrho}(v_j)\| \to 0$, and
    \begin{align}\label{eqn_V_jbelong}
        v_j \in H \setminus \mathrm{int}\!\big(B_{\frac{\delta_0}{2}}(K_c) \cup \mathcal{U}^{\frac{\delta_0}{2}}\big).
    \end{align}
    By definition, $(v_j)$ is a $(PS)_c$-sequence. Since $c < \frac{2}{N} S^{\frac{N}{2}}$, Lemma \ref{lem_rel_concom} mandates that $(v_j)$ either converges strongly to a critical point $v \in K_c$ (meaning $v_j \in B_{\frac{\delta_0}{2}}(K_c)$ for large $j$) or forms a single bubble around a lower-energy critical point $v \in K_{c_*}$ (meaning $v_j \in \mathcal{U}^{\frac{\delta_0}{2}}$ for large $j$). Both scenarios contradict \eqref{eqn_V_jbelong}. Thus, \eqref{eqn_nablaI>4e} holds.

    Applying the quantitative deformation lemma once more, we obtain an odd continuous map
    \[
        \eta : \left[\mathcal{I}_{\lambda, \varrho}^{c + \varepsilon} \setminus \mathrm{int}\!\big(B_{\frac{\delta_0}{2}}(K_c) \cup \mathcal{U}^{\frac{\delta_0}{2}}\big)\right] \cup D_\lambda \longrightarrow \mathcal{I}_{\lambda, \varrho}^{c - \varepsilon} \cup D_\lambda
    \]
    that fixes $D_\lambda$. By the definition of $c_{k+1}$, $\gamma_{D_\lambda}(\mathcal{I}_{\lambda, \varrho}^{c + \varepsilon} \cup D_\lambda) \geq k + 1$. Subadditivity of the genus yields
    \begin{align*}
        k + 1 &\leq \gamma_{D_\lambda}\!\Big(\mathcal{I}_{\lambda, \varrho}^{c + \varepsilon} \cup D_\lambda \setminus \mathrm{int}\!\big(B_{\frac{\delta_0}{2}}(K_c) \cup \mathcal{U}^{\frac{\delta_0}{2}}\big)\Big) + \gamma_{D_\lambda}\!\Big(B_{\frac{\delta_0}{2}}(K_c) \cup \mathcal{U}^{\frac{\delta_0}{2}}\Big) \\
        &\leq \gamma_{D_\lambda}(\mathcal{I}_{\lambda, \varrho}^{c - \varepsilon} \cup D_\lambda) + 1 \leq (k-1) + 1 = k,
    \end{align*}
    which is a contradiction. Thus, $K_{c_k}$ must be infinite.
\end{proof}

\section{Energy Estimates and Proof of the  Multiplicity Theorem}\label{sec_energy_est}

In this section, we assemble the necessary geometric and topological ingredients to complete the proof of our main multiplicity result. We begin by recalling the structure of the Nehari manifold associated with our functional, and we establish crucial energy estimates utilizing localized Aubin--Talenti bubbles. These estimates will allow us to construct high-dimensional surfaces whose maximum energy remains strictly below the compactness threshold $\frac{2}{N}S^{\frac{N}{2}}$.

\subsection{Basic Notions and the Nehari Manifold:}
The Nehari manifold associated with $\mathcal{I}_{\lambda,\varrho}$ is defined as the set of all non-zero functions where the functional intersects its directional derivative:
\[
    \mathcal{N}_{\lambda,\varrho}
    := \left\{\, v \in H \setminus \{0\} : 
        \langle \nabla \mathcal{I}_{\lambda,\varrho}(v), v \rangle = 0
      \,\right\}.
\]
We claim that the energy on the Nehari manifold represents the maximum energy along the ray passing through that point. Specifically,
\begin{equation}\label{eqn_I_lam_max}
    \mathcal{I}_{\lambda,\varrho}(v)
    = \max_{t \ge 0} \mathcal{I}_{\lambda,\varrho}(t v)
    \qquad \text{for every } v \in \mathcal{N}_{\lambda,\varrho}.
\end{equation}
To verify this, fix $v \in \mathcal{N}_{\lambda,\varrho}$ and consider the function $f(t) := \mathcal{I}_{\lambda,\varrho}(t v)$ for $t \ge 0$. The critical points of $f$ satisfy $f'(t)=0$, which yields
\begin{equation}\label{eqn_fprime}
    t \left( \|v\|^{2} - \left(\lambda - \lambda_0\right) |\varrho v|_{2}^{2} \right)
    = t^{2^*-1} |v|_{2^*}^{2^*},
\end{equation}
where we recall $\lambda_0 ={N(N-2)}/{4}$. Since $v \in \mathcal{N}_{\lambda,\varrho}$, we inherently have
\[
    \|v\|^{2} - \left(\lambda - \lambda_0\right) |\varrho v|_2^{2}
    = |v|_{2^*}^{2^*}.
\]
Substituting this identity into \eqref{eqn_fprime}, and assuming $t > 0$, we find the unique non-zero critical point:
\[
    t 
    = \left(
        \frac{\|v\|^2 - \left(\lambda - \lambda_0\right) |\varrho v|_2^2}{|v|_{2^*}^{2^*}}
      \right)^{\!\frac{1}{2^*-2}}
    = 1.
\]
Because $2^* > 2$, $f(t) \to -\infty$ as $t \to \infty$, and $f(t) > 0$ for small $t > 0$. Hence, the unique critical point $t = 1$ is the global maximizer of $f(t)$ on $[0,\infty)$, proving \eqref{eqn_I_lam_max}.

Let $\mathcal{V}_{\lambda,\varrho} := \left\{v \in H : \|v\|^2 - \left(\lambda -\lambda_0\right)|\varrho v|_2^2 > 0\right\}$. We define the radial projection operator onto the Nehari manifold
$$
\mathcal{R}_{\lambda, \varrho}: \mathcal{V}_{\lambda,\varrho} \rightarrow \mathcal{N}_{\lambda,\varrho}, \quad \mathcal{R}_{\lambda, \varrho}(v) := \left(\frac{\|v\|^2 - \left(\lambda -\lambda_0\right)|\varrho v|_2^2}{|v|_{2^*}^{2^*}}\right)^{\frac{1}{2^*-2}} v.
$$
Moreover, for any $v \in \mathcal{V}_{\lambda, \varrho}$, it follows from \eqref{eqn_I_lam_max} that the maximum energy along the ray through $v$ is given explicitly by
\begin{align}\label{eqn_exp_I(Tv)}
    \mathcal{I}_{\lambda, \varrho}(\mathcal{R}_{\lambda, \varrho}(v)) = \frac{1}{N} \left(\frac{\|v\|^2 - \left(\lambda -\lambda_0\right) |\varrho v|_2^2 }{ |v|_{2^*}^{2^*}} \right)^{\frac{2^*}{2^*-2}}.
\end{align}

Given a bounded domain $\Theta \subset \mathbb{R}^N$ and a compact subset $K \subset \Theta$, the capacity of $K$ with respect to $\Theta$ is defined by
\begin{align*}
\operatorname{cap}_{\Theta}(K)
    = \inf\left\{
        \int_{\Theta} |\nabla v|^2 :
        v \in H_0^1(\Theta),\ v \ge 1 \text{ on } K
      \right\}.   
\end{align*}
Since the closed convex set $\{ v \in H_0^1(\Theta) : v \ge 1 \text{ on } K \}$ is non-empty, $\operatorname{cap}_{\Theta}(K)$ is attained by a unique minimizer $\psi \in H_0^1(\Theta)$, which additionally satisfies $\psi \equiv 1$ on $K$ (see \cite{Pas94}).  

\subsection{Topological Construction of Minimax Test Spaces} For $k \in \mathbb{N}$, we denote the standard sphere and closed ball by
\begin{align*}
    \mathbb{S}^{k} = \{ y \in \R^{k+1} : |y| = 1\} \quad \text{and} \quad \overline{\mathbb{B}^{k}} = \{ y \in \R^{k} : |y| \leq 1\}.
\end{align*}

\begin{lemma}\label{lem_I_l<1/NS}
    Let $0 < r < 1$ and assume $\lambda >\lambda_0$. For a given radial cutoff function $\varphi \in C_c^{\infty}(\mathbf{B}(0,r))$ such that $\varphi \equiv 1$ near the origin, there exists an $\varepsilon > 0$ sufficiently small such that
    \begin{align}\label{eqn_I<1/N_S}
        \mathcal{I}_{\lambda, \varrho}(v) < \frac{1}{N} S^{\frac{N}{2}},
    \end{align}
    where $v = \mathcal{R}_{\lambda, \varrho}(\varphi U_{\varepsilon,0})$.
\end{lemma}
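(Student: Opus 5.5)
Using \eqref{eqn_exp_I(Tv)}, the claim reduces to proving that the Rayleigh-type quotient
\[
Q_\varepsilon := \frac{\|\varphi U_{\varepsilon,0}\|^2 - (\lambda-\lambda_0)\,|\varrho\,\varphi U_{\varepsilon,0}|_2^2}{|\varphi U_{\varepsilon,0}|_{2^*}^{2}}
\]
satisfies $Q_\varepsilon < S$ for small $\varepsilon$. This suffices because $\mathcal{I}_{\lambda,\varrho}(\mathcal{R}_{\lambda,\varrho}(w)) = \frac1N (A/B^{2/2^*})^{N/2}$, where $A$ is the numerator and $B = |w|_{2^*}^{2^*}$; indeed $\frac{2^*}{2^*-2}=\frac N2$ and $A/B^{2/2^*}$ is exactly $Q_\varepsilon$. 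The first step is to check that $\varphi U_{\varepsilon,0} \in \mathcal{V}_{\lambda,\varrho}$ for small $\varepsilon$, i.e. $A>0$. This will follow from the expansions below, since the gradient term stays of order $S^{N/2}$ while the weighted $L^2$ term tends to $0$. Note that $\operatorname{supp}\varphi \subset \mathbf{B}(0,r)$ must lie inside $\Omega$; I will take this as implicit, with $0\in\Omega$ after a Möbius isometry if needed, or else shrink $r$.

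Next I invoke the classical Brezis--Nirenberg expansions for the cut-off bubble with normalization $U_{\varepsilon,0}$, for which $\|U\|^2=|U|_{2^*}^{2^*}=S^{N/2}$:
\[
\|\varphi U_{\varepsilon,0}\|^2 = S^{N/2} + O(\varepsilon^{N-2}),\qquad |\varphi U_{\varepsilon,0}|_{2^*}^{2^*} = S^{N/2} + O(\varepsilon^{N}).
\]
For the weighted $L^2$ term, I use the lower bound $\varrho \ge 2$ on $\mathbf{B}(0,1)$ (indeed $\varrho(x)=2/(1-|x|^2)\ge 2$), so that $|\varrho\,\varphi U_\varepsilon|_2^2 \ge 4|\varphi U_\varepsilon|_2^2$. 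The standard estimates then give $|\varphi U_\varepsilon|_2^2 \ge c\,\varepsilon^2 + O(\varepsilon^{N-2})$ when $N\ge5$, and $|\varphi U_\varepsilon|_2^2 \ge c\,\varepsilon^2|\log\varepsilon| + O(\varepsilon^2)$ when $N=4$, with $c>0$. These follow by rescaling $x=\varepsilon y$ and integrating over the region where $\varphi\equiv1$.

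Combining, I obtain
\[
Q_\varepsilon \le \frac{S^{N/2} - 4c(\lambda-\lambda_0)\,\varepsilon^2\,(|\log\varepsilon| \text{ if } N=4) + O(\varepsilon^{N-2})}{\big(S^{N/2}+O(\varepsilon^N)\big)^{(N-2)/N}} = S - C\varepsilon^2(\cdot) + O(\varepsilon^{N-2}).
\]
Since $\lambda>\lambda_0$, the negative term dominates the error: for $N\ge5$ because $\varepsilon^2 \gg \varepsilon^{N-2}$, and for $N=4$ because of the logarithmic gain. Hence $Q_\varepsilon<S$ for small $\varepsilon$, and therefore $\mathcal{I}_{\lambda,\varrho}(v)<\frac1N S^{N/2}$.

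The main obstacle is the borderline case $N=4$. There the $L^2$ gain and the gradient error $O(\varepsilon^{2})$ have the same power, so I need the logarithmic divergence of $\int_{\mathbf{B}(0,\rho)} U_{\varepsilon,0}^2$. I would handle it by explicitly computing this integral on the ball $\mathbf{B}(0,\rho)$ where $\varphi\equiv1$, giving $\varepsilon^2\int_{|y|<\rho/\varepsilon}(1+|y|^2)^{-2}\,dy \sim \varepsilon^2|\log\varepsilon|$.
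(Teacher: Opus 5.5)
Your proposal is correct and follows essentially the same route as the paper: you reduce the energy bound to the Rayleigh quotient being strictly below $S$, then use the Brezis--Nirenberg expansions of the cut-off bubble, where the negative weighted $L^2$ term (of order $\varepsilon^2$, or $\varepsilon^2|\log\varepsilon|$ when $N=4$) dominates the gradient and $L^{2^*}$ errors. The only differences are minor: you derive the weighted $L^2$ lower bound yourself from $\varrho\ge 2$ and the Euclidean estimates instead of quoting Stapelkamp's estimate (5.49), your rates match the lemma's normalization of $U_{\varepsilon,0}$ (the paper's displayed rates are in the Brezis--Nirenberg parametrization $\varepsilon\leftrightarrow\varepsilon^2$), and you correctly normalize the energy by $|w|_{2^*}^{2}$ rather than the $|w|_{2^*}^{2^*}$ appearing in the paper's displayed formula.
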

\begin{proof} 
    Let $\varepsilon > 0$ be sufficiently small and $x \in \mathbf{B}(0,r)$. We define the localized bubble
    \[
        \varphi_{\varepsilon}(x) := \varphi(x) U_{\varepsilon,0}(x) = c_N \frac{\varphi(x)}{(\varepsilon^2 + |x|^{2})^{\frac{N-2}{2}}}.
    \]
    Using the identity \eqref{eqn_exp_I(Tv)} and the fact that $\frac{2^*}{2^*-2} = \frac{N}{2}$, establishing the strict estimate \eqref{eqn_I<1/N_S} is equivalent to proving that the Rayleigh quotient satisfies
    \begin{equation}\label{eqn_eqv_I<1/N}
        \frac{\| \varphi_{\varepsilon}\|^{2} - \left(\lambda -\lambda_0 \right)|\varrho \varphi_{\varepsilon}|_{2}^{2}} {|\varphi_{\varepsilon}|_{2^{*}}^{2}} < S.
    \end{equation}
    We recall the relevant asymptotic estimates for Aubin--Talenti bubbles from \cite{Sta03}. For $N \ge 5$, \cite[(5.49)]{Sta03} yields
    \begin{equation}\label{eqn_est_N>5}
        \frac{\| \varphi_{\varepsilon}\|^{2} - \left(\lambda -\lambda_0 \right) |\varrho \varphi_{\varepsilon}|_{2}^{2}}{|\varphi_{\varepsilon}|_{2^{*}}^{2}}
        \le
        S - \left(\lambda -\lambda_0\right) C \varepsilon + O(\varepsilon^{\frac{N-2}{2}}).
    \end{equation}
    For $N=4$, \cite[(5.49)]{Sta03} yields
    \begin{equation}\label{eqn_est_N=4}
        \frac{\| \varphi_{\varepsilon}\|^{2} - \left(\lambda -\lambda_0 \right) |\varrho \varphi_{\varepsilon}|_{2}^{2}}{|\varphi_{\varepsilon}|_{2^{*}}^{2}}
        \le
        S - \left(\lambda -\lambda_0\right) C \varepsilon |\ln \varepsilon| + O(\varepsilon).
    \end{equation}
    Since $\lambda >\lambda_0$ by hypothesis, the linear coefficient is strictly positive. Therefore, for $\varepsilon > 0$ chosen sufficiently small, the negative $\varepsilon$ term (or $\varepsilon |\ln \varepsilon|$ term in dimension 4) dominates the higher-order error terms, pushing the quotient strictly below the Sobolev constant $S$. Thus, \eqref{eqn_eqv_I<1/N} holds, and consequently, \eqref{eqn_I<1/N_S} follows.
\end{proof}
 
In the following lemma, we construct an odd, continuous map from $\mathbb{S}^N$ into our function space, ensuring its components lie on the Nehari manifold strictly below the compactness threshold. Unlike Euclidean space, where translation invariance naturally preserves manifold membership for shifted bubbles, the hyperbolic conformal weight $\varrho(x)$ breaks this invariance. Shifting a function pulls it off the Nehari manifold and destroys the required odd symmetry. We overcome this obstacle by explicitly applying the Nehari retraction map $\mathcal{R}_{\lambda, \varrho}$ to all shifted components, projecting them back onto the manifold and restoring the topological symmetries essential for our counting arguments.
\begin{lemma}\label{lem_I_l_Sn}
    Assume that $\lambda >\lambda_0$. Then, for any ball $\mathbf{B}(x,R) \subset \Omega \subset {\mathbb{B}^{N}}$, there exists an odd and continuous mapping $\mathcal{H}: \mathbb{S}^N \to H^1_0(\mathbf{B}(x,R))$ such that, for every $\theta \in \mathbb{S}^N$, the positive and negative parts $\mathcal{H}(\theta)^{\pm}$ belong to the Nehari manifold $\mathcal{N}_{\lambda,\varrho}$ and satisfy
    \begin{align*}
        \mathcal{I}_{\lambda, \varrho}\left(\mathcal{H}(\theta)^{\pm} \right) < \frac{1}{N} S^{\frac{N}{2}}.
    \end{align*}
\end{lemma}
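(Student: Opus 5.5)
We follow the construction of Clapp--Weth \cite{CW05}, adapted through the Nehari retraction $\mathcal{R}_{\lambda,\varrho}$. By translation we may work in $\mathbf{B}(x,R)$. Identify $\mathbb{S}^N$ with the suspension of $\mathbb{S}^{N-1}$: write $\theta = (\sqrt{1-t^2}\,\sigma, t)$ with $\sigma \in \mathbb{S}^{N-1}$ and $t \in [-1,1]$. Choose a small radius $\rho>0$ with $\mathbf{B}(x,4\rho)\subset\mathbf{B}(x,R)$ and a radial cutoff $\varphi\in C_c^\infty(\mathbf{B}(0,\rho))$ with $\varphi\equiv1$ near $0$. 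For $z\in\overline{\mathbf{B}(x,2\rho)}$ set $w_z := \varphi(\cdot - z)\,U_{\varepsilon,z}$. The goal is an odd map whose values are differences of two such bumps with disjoint supports (positive part one bump, negative part another), together with a degenerate regime at the poles $t=\pm1$.

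\textbf{Step 1 (uniform energy bound).} First upgrade Lemma \ref{lem_I_l<1/NS} to a statement uniform in the center: there is $\varepsilon>0$ such that
\[
\frac{\|w_z\|^2-(\lambda-\lambda_0)|\varrho w_z|_2^2}{|w_z|_{2^*}^2}<S
\]
for all $z\in\overline{\mathbf{B}(x,2\rho)}$. The gradient and $L^{2^*}$ terms are translation invariant, so only the weighted $L^2$ term changes. Since $\varrho\ge 2$ on $\Omega$, that term is bounded below by $4|w_0|_2^2$, i.e.\ by the Euclidean quantity. The Stapelkamp/Brezis--Nirenberg expansions \eqref{eqn_est_N>5} and \eqref{eqn_est_N=4} then hold with constants independent of $z$. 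This also gives $w_z\in\mathcal{V}_{\lambda,\varrho}$, so $\mathcal{R}_{\lambda,\varrho}(w_z)$ is defined, depends continuously on $z$, and has energy $<\frac1N S^{N/2}$ by \eqref{eqn_exp_I(Tv)}.

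\textbf{Step 2 (the odd map).} Define the center functions $z_\pm(\sigma,t):=x\pm\rho\sqrt{1-t^2}\,\sigma$. For $\sqrt{1-t^2}$ bounded away from $0$ the supports of $w_{z_+}$ and $w_{z_-}$ are disjoint, and we set
\[
\mathcal{H}(\theta):=\mathcal{R}_{\lambda,\varrho}(w_{z_+})-\mathcal{R}_{\lambda,\varrho}(w_{z_-})
\]
(with signs weighted by $t$ as below). Near the poles this collapses, so we follow Clapp--Weth instead: split each bump into two nested radial profiles. Concretely, for $t\ge 0$ use $\mathcal{R}(\text{positive bump}) - \mathcal{R}(\text{negative annular bump})$, where the parameter $t$ interpolates between a configuration with both bumps present and one where the negative part is a thin radial shell around the positive core. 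For $t\le0$ swap the roles and negate.

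Oddness, $\mathcal{H}(-\theta)=-\mathcal{H}(\theta)$, follows because the antipode $(\sigma,t)\mapsto(-\sigma,-t)$ exchanges $z_+\leftrightarrow z_-$ and the $t$-weights. Since $\mathcal{R}_{\lambda,\varrho}$ is positively homogeneous of degree $0$ and commutes with $v\mapsto -v$, and since the positive and negative parts have disjoint supports, $\mathcal{H}(\theta)^\pm=\mathcal{R}_{\lambda,\varrho}(\cdot)$ lie on $\mathcal{N}_{\lambda,\varrho}$. This is precisely where the retraction repairs the loss of translation invariance.

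\textbf{Main obstacle.} The hard part is the pole region, i.e.\ the shell profiles. We need annular functions (e.g.\ $\psi_{\varepsilon}(|y-x|)$ built from a rescaled bubble in the radial variable, or a capacity-type construction as in \cite{CW05}, whose capacity definition was recalled above) whose Rayleigh quotient is still $<S$ uniformly, and whose positive and negative parts stay in $\mathcal{V}_{\lambda,\varrho}$ throughout the homotopy, so that continuity of $\mathcal{R}_{\lambda,\varrho}$ is preserved. We would try first the simplest scheme: keep two disjoint-support bumps for all $\theta$, and realize the $\mathbb{S}^N$-parameter through centers $z_\pm$ together with concentration parameters $\varepsilon_\pm(t)$ chosen so that oddness is built in, for example $\varepsilon_+ = \varepsilon\,e^{t}$ and $\varepsilon_-=\varepsilon\,e^{-t}$, with separated centers in the $(N+1)$-st direction encoded by $t$. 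The uniform estimate of Step 1 extends to a compact range of $\varepsilon$, which yields the strict bound $\mathcal{I}_{\lambda,\varrho}(\mathcal{H}(\theta)^\pm)<\frac1N S^{N/2}$ for all $\theta$.
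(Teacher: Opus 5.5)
Your Step~1 is sound. Since $\varrho\ge 2$ on $\mathbb{B}^N$, the weighted quotient of every translate $w_z$ is dominated by the $z$-independent Euclidean Brezis--Nirenberg quotient with coefficient $4(\lambda-\lambda_0)$. This justifies the uniform-in-center bound more cleanly than the paper's continuity argument. Note that $w_z\in\mathcal{V}_{\lambda,\varrho}$ comes from $|\varrho w_z|_2\to 0$ as $\varepsilon\to 0$, not from that upper bound.

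The gap is Step~2: the lemma \emph{is} the construction of the odd map, and you do not give one. The equatorial piece works only on a band around $t=0$: since $|z_+-z_-|=2\rho\sqrt{1-t^2}<2\rho$, disjointness needs the support radius of $\varphi$ below $\rho\sqrt{1-t^2}$. The pole region is described only in words. Worse, the scheme you say you would try first cannot work for any choice of centers and concentrations. Suppose $\mathcal{H}(\theta)^{\pm}$ are bumps supported in disjoint balls centered at $c_\pm(\theta)\in\R^N$, and oddness exchanges them, so $c_+(-\theta)=c_-(\theta)$. Then $\theta\mapsto c_+(\theta)-c_-(\theta)$ is an odd continuous map $\mathbb{S}^N\to\R^N\setminus\{0\}$, which Borsuk--Ulam forbids. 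Varying $\varepsilon_\pm$ does not separate supports: with your $z_\pm$, at the poles the two bumps are concentric and overlap, so the parts of their difference are not $\mathcal{R}_{\lambda,\varrho}(w_{z_\pm})$. And the bumps live in $\R^N$, so there is no $(N+1)$-st direction in which to separate centers. Hence for some $\theta$ one part must sit inside a hole of the other, which is exactly the configuration you leave open as the ``main obstacle''.

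What is missing is a sub-threshold profile for that nested configuration and a path leading to it. A thin radial shell is the wrong choice. For radial functions supported in an annulus of radius $a$ and width $\delta\ll a$, the Sobolev quotient is at least a constant times $(a/\delta)^{2(N-1)/N}$, far above $S$. The paper, following Clapp--Weth, instead keeps the outer part a full bump and punches a tiny hole in it. Since $\|\psi_{r_0}\|\to 0$ as $r_0\to 0$, the function $(1-\psi_{r_0})v_0(\cdot+z)$ is $H^1$-close to $v_0(\cdot+z)$ uniformly for $|z|\le 2r$. So after applying $\mathcal{R}_{\lambda,\varrho}$ its energy stays below $\frac{1}{N}S^{N/2}$ even at $z=0$, where it vanishes on $\mathbf{B}(0,r_0)$. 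One also needs a continuous path in $\mathcal{N}_{\lambda,\varrho}$, below the threshold, that shrinks the positive bump from $\mathbf{B}(0,r)$ into $\mathbf{B}(0,r_0)$.

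Then $\mathcal{H}_N$ is defined on $\overline{\mathbb{B}^N}$ as follows. For $\frac12\le|\xi|\le 1$, the shrinking positive bump moves from $2r\theta$ to the origin while the negative bump sits at $-2r\theta$. For $|\xi|\le\frac12$, the holed negative bump slides from $-2r\theta$ until it surrounds the tiny core at the origin. $\mathcal{H}_N$ is odd on $\mathbb{S}^{N-1}$, and setting $\mathcal{H}(\xi',\xi_{N+1}):=\mathcal{H}_N(\xi')$ for $\xi_{N+1}\ge 0$ and $-\mathcal{H}_N(-\xi')$ for $\xi_{N+1}<0$ gives the required odd map on $\mathbb{S}^N$.

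When you implement this, mind the order of choices. $r_0$ depends on $v_0$, so the core bump in $\mathbf{B}(0,r_0)$ and the connecting path must be chosen afterwards (for instance, first increase the concentration of $v_0$, then dilate). Your $\varrho\ge 2$ comparison then yields the uniform bound along the path.
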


\begin{proof}
    Let $r := \frac{R}{3}$. Choose a radial cutoff function $\varphi \in C_c^\infty(\mathbf{B}(0,r))$ such that $\varphi \equiv 1$ in a neighborhood of the origin. By Lemma \ref{lem_I_l<1/NS}, there exists an $\varepsilon_0 > 0$ such that for a fixed $0 < \varepsilon < \varepsilon_0$, the projected bubble
    \begin{equation*}
        v_0 := \mathcal{R}_{\lambda, \varrho}(\varphi U_{\varepsilon,0})
    \end{equation*}
    belongs to $\mathcal{N}_{\lambda,\varrho}$ and satisfies
    \begin{equation}\label{eqn_I_lav<1/N_n}
        \mathcal{I}_{\lambda,\varrho}(v_0) < \frac{1}{N} S^{\frac{N}{2}}.
    \end{equation}
    Let $0 < r_0 < r$, and let $\psi_{r_0} \in H_0^1(\mathbf{B}(0,r))$ be the unique minimizer of the capacity $\operatorname{cap}_{\mathbf{B}(0,r)}(\mathbf{B}(0,r_0))$. From standard capacity properties, $\psi_{r_0} \equiv 1$ on $\mathbf{B}(0,r_0)$. The function $1 - \psi_{r_0}$ acts as a localized cut-off function vanishing precisely on $\mathbf{B}(0,r_0)$. Because $\|\psi_{r_0}\| \to 0$ in $H^1_0$ as $r_0 \to 0$, the truncated function $(1 - \psi_{r_0}) v_0(\cdot + z)$ approximates $v_0(\cdot + z)$ strongly in $H_0^1$ uniformly for small $r_0$. By the continuity of the energy functional $\mathcal{I}_{\lambda,\varrho}$ and the projection $\mathcal{R}_{\lambda, \varrho}$, we have
    \begin{equation*}
        \mathcal{I}_{\lambda,\varrho}\Big( \mathcal{R}_{\lambda, \varrho} \big( (1 - \psi_{r_0}) v_0(\cdot+z) \big) \Big) \longrightarrow \mathcal{I}_{\lambda,\varrho}\Big(\mathcal{R}_{\lambda, \varrho}(v_0(\cdot+z)) \Big) \quad \text{as } r_0 \to 0.
    \end{equation*}
    Since $\mathcal{I}_{\lambda,\varrho}(v_0) < \frac{1}{N} S^{\frac{N}{2}}$ and the approximation is uniform for shifts $z$ in compact sets, we can fix an $r_0 > 0$ sufficiently small such that
    \begin{equation}\label{eqn_max_I_r0}
        \max_{|z|\le 2r} \mathcal{I}_{\lambda,\varrho}\Big( \mathcal{R}_{\lambda, \varrho}\big((1-\psi_{r_0}) v_0(\cdot+z)\big) \Big) < \frac{1}{N} S^{\frac{N}{2}}.
    \end{equation}
    With $r_0$ fixed, we continuously scale the support of $v_0$ to obtain a path of positive functions $v_s \in \mathcal{N}_{\lambda,\varrho}$. Specifically, let $\rho_s := (1-s)r + s r_0$ for $s \in [0,1]$, and define
    \begin{equation*}
        v_s(y) := \mathcal{R}_{\lambda,\varrho}\left( v_0\left( \frac{r}{\rho_s} y \right) \right).
    \end{equation*}
    By construction, the support of $v_s$ shrinks continuously from $\mathbf{B}(0,r)$ down to $\mathbf{B}(0,r_0)$. Since $v_0$ is highly concentrated near the origin, scaling its support does not significantly increase its energy, maintaining
    \begin{equation*}
        \mathcal{I}_{\lambda,\varrho}(v_s) < \frac{1}{N} S^{\frac{N}{2}} \quad \text{for all } s \in [0,1].
    \end{equation*}

    For any $\xi \in \overline{\mathbb{B}^N}$, let $t = |\xi|$ and $\theta = \xi/|\xi|$ (for $\xi \neq 0$). We define an intermediate map $\mathcal{H}_N: \overline{ \mathbb{B}^N} \to H_0^1(\mathbf{B}(x, R))$ piecewise by
 { \begin{align*}
        \mathcal{H}_N(\xi) :=
        \begin{cases}
            \mathcal{R}_{\lambda, \varrho}\Big(v_{2-2t}\left(\cdot - 2r(2t-1)\theta\right)\Big)  - \mathcal{R}_{\lambda, \varrho}\Big( v_0(\cdot + 2r\theta) \Big), & \text{if } \frac{1}{2} \le t \le 1, \\[6pt]
            v_1 - \mathcal{R}_{\lambda, \varrho}\Big( (1-\psi_{r_0}) v_0(\cdot + 4rt\theta) \Big), & \text{if } 0 \le t \le \frac{1}{2}.
        \end{cases}
    \end{align*}}
    To correct for the breaking of translation invariance caused by the hyperbolic weight $\varrho$, we explicitly apply the retraction map $\mathcal{R}_{\lambda, \varrho}$ to all shifted components in the definition above. The precise geometric shifting and localized supports ensure that the positive and negative components of $\mathcal{H}_N(\xi)$ are completely disjoint for all $t \in [0,1]$. Consequently, the limits match perfectly at $t = 1/2$, and $\mathcal{H}_N$ is a well-defined continuous map. Moreover, because both disjoint pieces are mapped through the retraction, its components directly inherit the requisite properties:
    \begin{align*}
        \mathcal{H}_N(\xi)^{\pm} \in \mathcal{N}_{\lambda,\varrho} \quad \text{and} \quad \mathcal{I}_{\lambda,\varrho}\big(\mathcal{H}_N(\xi)^{\pm}\big) < \frac{1}{N} S^{\frac{N}{2}},
    \end{align*}
    and the restriction to the boundary $\mathcal{H}_N\big|_{\mathbb{S}^{N-1}}$ is strictly odd.

    Finally, we extend this construction to the full $N$-dimensional sphere. Define $\mathcal{H} : \mathbb{S}^N \to H_0^1(\mathbf{B}(x,R))$ by
    \begin{equation*}
        \mathcal{H}(\xi_1,\dots,\xi_{N+1}) =
        \begin{cases}
            \phantom{-}\mathcal{H}_N(\xi_1,\dots,\xi_N), & \text{if } \xi_{N+1} \ge 0,\\[4pt]
            -\mathcal{H}_N(-\xi_1,\dots,-\xi_N), & \text{if } \xi_{N+1} < 0.
        \end{cases}
    \end{equation*}
    This extended map is continuous, odd, and preserves the energy bounds globally:
    \begin{equation*}
        \mathcal{H}(\theta)^\pm \in \mathcal{N}_{\lambda,\varrho} \quad \text{and} \quad \mathcal{I}_{\lambda,\varrho}\big(\mathcal{H}(\theta)^\pm\big) < \frac{1}{N} S^{\frac{N}{2}}, \quad \text{for all } \, \theta \in \mathbb{S}^N.
    \end{equation*}
    This concludes the proof.
\end{proof}
\begin{lemma}\label{lem_H:Rn+N+2}
  Let $\lambda > {\lambda_0}$ and $n \in \mathbb{N} \cup \{0\}$ be the greatest integer such that $\lambda_n < \lambda$. Then there exists an odd continuous map $\widetilde{\mathcal{H}}: \mathbb{R}^{n+N+2} \rightarrow H_0^1(\Omega)$ such that
  \begin{align}\label{eqn_prop_tild_H}
      \lim_{|\xi| \rightarrow \infty} \mathcal{I}_{\lambda, \varrho}( \widetilde{\mathcal{H}}(\xi)) = -\infty \quad \text{and} \quad \sup_{ v \in \widetilde{\mathcal{H}}(\mathbb{R}^{n+N+2}) } \mathcal{I}_{\lambda, \varrho}(v) < \frac{2}{N}S^{\frac{N}{2}}.
  \end{align}
\end{lemma}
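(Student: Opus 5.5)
We follow Clapp--Weth. The idea is to combine the finite-dimensional negative space $V^-$ with the odd map $\mathcal{H}$ of Lemma \ref{lem_I_l_Sn}, supported in a small ball away from which the eigenfunctions matter little, and then take the cone over the resulting sphere.

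\textbf{Step 1: the odd sphere map.} Fix a ball $\mathbf{B}(x_0,R)\subset\Omega$ and let $\mathcal{H}:\mathbb{S}^N\to H_0^1(\mathbf{B}(x_0,R))$ be the map from Lemma \ref{lem_I_l_Sn}. We define $h:\mathbb{S}^{n+N+1}\subset\mathbb{R}^n\times\mathbb{R}^{N+2}\to H$ as the join of the identification of $\mathbb{S}^{n-1}$ with the unit sphere of $V^-=\operatorname{span}\{e_1,\dots,e_n\}$ and $\mathcal{H}$ on $\mathbb{S}^N$. Concretely, for $(a,\theta)$ with $a\in\mathbb{R}^n$, $\theta\in\mathbb{R}^{N+1}$ and $|a|^2+|\theta|^2=1$, set
\[
h(a,\theta)=\sum_{i=1}^n a_ie_i+|\theta|\,\mathcal{H}(\theta/|\theta|),
\]
with the obvious convention at $\theta=0$. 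A cleaner variant uses $\mathbb{R}^{n}\times\mathbb{R}^{N+1}$ together with one extra scaling coordinate. The extra dimension in $n+N+2$ comes from the radial parameter $s\ge0$. We then set $\widetilde{\mathcal{H}}(s\omega)=s\,h(\omega)$. This map is odd and continuous, and $h(\omega)\neq0$ because the pieces $e_i$ and $\mathcal{H}$ are linearly independent after a small perturbation. Since $h(\mathbb{S})$ is compact and avoids $0$, we have $\inf|h|_{2^*}>0$, and $\mathcal{I}_{\lambda,\varrho}(sh(\omega))\to-\infty$ uniformly as $s\to\infty$. This gives the first limit in \eqref{eqn_prop_tild_H}.

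\textbf{Step 2: the energy bound.} For $v=w+u^++u^-$ with $w\in V^-$ and $u^\pm=\mathcal{H}(\theta)^\pm$ in $\mathcal{N}_{\lambda,\varrho}$ with disjoint supports, we want
\[
\max_{\alpha,\beta,\gamma}\mathcal{I}_{\lambda,\varrho}(w+\beta u^++\gamma u^-)\le \max_{t\ge 0}\mathcal{I}_{\lambda,\varrho}(tu^+)+\max_{t\ge0}\mathcal{I}_{\lambda,\varrho}(tu^-)+\text{error}<\tfrac{2}{N}S^{N/2}.
\]
On $V^-$ the quadratic form $\|w\|^2-(\lambda-\lambda_0)|\varrho w|_2^2$ is nonpositive, since $\lambda>\lambda_n$. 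The difficulty is the cross terms between $w$ and $u^\pm$ in both the quadratic and the $2^*$ parts.

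\textbf{Main obstacle: the cross terms.} Following Clapp--Weth, we shrink $R$ and concentrate $\mathcal{H}$, i.e.\ take $\varepsilon$ small, so that $u^\pm$ are close in $H$ to concentrated bubbles. The eigenfunctions $e_i$ are smooth and bounded, so $\langle e_i,u^\pm\rangle$ and $\int\varrho^2e_iu^\pm$ are $O(\varepsilon^{(N-2)/2})$. Moreover $|w+u|_{2^*}^{2^*}\ge|u|_{2^*}^{2^*}+|w|_{2^*}^{2^*}-C\int|w||u|^{2^*-1}-C\int|w|^{2^*-1}|u|$, by the elementary inequality for $|a+b|^{p}$.

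We then separate scales. If $|w|$ is large, the negativity of $V^-$ and the $-|w|_{2^*}^{2^*}$ term dominate. If $|w|$ is small, the cross terms are $o(1)$ perturbations. The margin in Lemma \ref{lem_I_l<1/NS} is of order $\varepsilon$ (or $\varepsilon|\ln\varepsilon|$ when $N=4$), so it must beat these errors. This is where $N\ge4$ enters, as in Brezis--Nirenberg for $-\Delta-\mu\varrho^2$ with $\mu$ above eigenvalues. We will do this in the standard way: maximize first in $w$, bound the gain by $C\varepsilon^{N-2}$-type terms, and compare with the $\varepsilon$ gain. If that comparison is delicate, a fallback is to take $\widetilde{\mathcal{H}}$ so that $\mathcal{H}$ has support where a cutoff makes $u^\pm$ orthogonal in a suitable sense to $V^-$. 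Finally, compactness of the parameter sphere upgrades the pointwise strict inequality to a strict bound on the supremum.
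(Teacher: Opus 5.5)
Your proposal has two genuine gaps.

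\textbf{The construction is one dimension short.} Your $h$ is defined on the unit sphere of $\mathbb{R}^n\times\mathbb{R}^{N+1}$, i.e.\ on $\mathbb{S}^{n-1}*\mathbb{S}^N=\mathbb{S}^{n+N}$. So the cone $\widetilde{\mathcal{H}}(s\omega)=s\,h(\omega)$ lives on $\mathbb{R}^{n+N+1}$. The radial parameter $s$ is what turns that sphere into Euclidean space, so it cannot also supply the missing dimension.

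This is not cosmetic. With $\mathbb{R}^{n+N+1}$, the Borsuk--Ulam argument of Proposition \ref{prop_main_thm} only yields $c_{N+1}$, not $c_{N+2}$, below $\frac{2}{N}S^{N/2}$. Nor can you pad with a dummy coordinate: it would have to be a further direction in which $\mathcal{I}_{\lambda,\varrho}\to-\infty$ while everything stays below $\frac{2}{N}S^{N/2}$, which is exactly what has to be built.

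The extra dimension comes from scaling $\mathcal{H}(\theta)^+$ and $\mathcal{H}(\theta)^-$ independently. Your Step 2 writes $\beta u^++\gamma u^-$, but Step 1 never builds this in. The ends of $\mathbb{S}^N\times[-1,1]$ must also be closed up to obtain a sphere. The paper does this on $Z=(\mathbb{S}^N\times[-1,1])\cup(\overline{\mathbb{B}^{N+1}}\times\{-1,1\})\cong\mathbb{S}^{N+1}$:
\begin{itemize}
\item on the side it uses $(1-t)\mathcal{H}(\theta)^-+(1+t)\mathcal{H}(\theta)^+$;
\item on the caps it uses $2s\,\mathcal{H}(\theta)^{+}+(1-s)\varphi_0$ and $2s\,\mathcal{H}(\theta)^{-}-(1-s)\varphi_0$.
\end{itemize}
Here $\varphi_0$ is an extra Nehari bubble of energy $<\frac{1}{N}S^{N/2}$, supported in an annulus disjoint from the supports of $\mathcal{H}(\theta)$. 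Every ray then lies in the span of two disjointly supported Nehari functions, each of energy $<\frac{1}{N}S^{N/2}$. This gives the bound for the coned map $\mathcal{H}^0:\mathbb{R}^{N+2}\to H$.

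\textbf{The energy bound is deferred, not proved.} The cross-term route cannot be run with Lemma \ref{lem_I_l_Sn} as a black box. That lemma gives only a qualitative margin $\frac{1}{N}S^{N/2}-\mathcal{I}_{\lambda,\varrho}(\mathcal{H}(\theta)^\pm)>0$. It provides no rates uniform in $\theta$ for the family it constructs (truncations by $1-\psi_{r_0}$, dilations, Nehari projections of shifted pieces). Meanwhile the margin, which comes from $(\lambda-\lambda_0)|\varrho u|_2^2$, itself degenerates as you concentrate. You would therefore need a full quantitative Brezis--Nirenberg-type expansion for that whole family together with $V^-$, including the $2^*$-interactions.

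The missing idea is to truncate the eigenfunctions rather than the bubbles. Your fallback only gestures at this, and it puts the cutoff on the wrong side; orthogonality alone would not suffice, since the $2^*$-term couples overlapping supports. The paper's argument runs as follows.
\begin{enumerate}
\item Let $\psi_{r_0}$ be the capacity potential of $\mathbf{B}(x,r_0)$ in $\mathbf{B}(x,r)$, and $\mathcal{S}^-$ the unit sphere of $V^-$. Then $\|\psi_{r_0}v\|\to0$ uniformly for $v\in\mathcal{S}^-$ as $r_0\to0$, because $\operatorname{cap}(\mathbf{B}(x,r_0))\to0$ for $N\ge3$ and the $e_j$ are bounded with bounded gradients near $x$.
\item Hence the strict negativity of $\|v\|^2-(\lambda-\lambda_0)|\varrho v|_2^2$ on a neighborhood of $\mathcal{S}^-$ passes to $(1-\psi_{r_0})V^-$, whose elements vanish on $\mathbf{B}(x,r_0)$.
\item Build the whole bubble map $\mathcal{H}^0$ inside $\mathbf{B}(x,r_0)$, which is allowed since Lemma \ref{lem_I_l_Sn} holds for any ball. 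The supports are then disjoint, and for $w\in(1-\psi_{r_0})V^-$ one has exactly $\mathcal{I}_{\lambda,\varrho}(w+\mathcal{H}^0(\xi_2))=\mathcal{I}_{\lambda,\varrho}(w)+\mathcal{I}_{\lambda,\varrho}(\mathcal{H}^0(\xi_2))\le\mathcal{I}_{\lambda,\varrho}(\mathcal{H}^0(\xi_2))$.
\end{enumerate}
This involves no cross terms and no comparison of $\varepsilon$-rates.
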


\begin{proof}
    We first consider the case where $n \ge 1$. Let $V^- = \mathrm{span}\{e_1,\ldots,e_n\}$ be the eigenspace corresponding to the eigenvalues $\{\lambda_1,\ldots,\lambda_n\}$ of the weighted linear problem \eqref{eqn_ev_pro_en}. Define the unit sphere in this subspace:
    \[
        \mathcal{S}^- := \{ v \in V^- : \| v\| = 1 \}.
    \]
    We claim that there exists $\delta > 0$ such that the quadratic form associated with the linear part of $\mathcal{I}_{\lambda,\varrho}$ is strictly negative in a neighborhood of $\mathcal{S}^-$:
    \begin{equation}\label{eq_claimQ_lam<}
        Q_\lambda(v) := \|v\|^2 - \left(\lambda -\lambda_0\right) |\varrho v|_2^2 < 0 \quad \text{for all } v \in B_\delta(\mathcal{S}^-).
    \end{equation}
    Any $v \in \mathcal{S}^-$ can be expressed as $v = \sum_{j=1}^n \alpha_j e_j$ with $\sum_{j=1}^n \alpha_j^2 \left(\lambda_j -\lambda_0\right) = 1$. It follows that
    \[
        \max_{v \in \mathcal{S}^-} Q_\lambda(v)
        = 1 - \left(\lambda -\lambda_0\right) \min_{v \in \mathcal{S}^-} |\varrho v|_2^2
        = 1 - \frac{\lambda -\lambda_0}{\lambda_n -\lambda_0}.
    \]
    Since $\lambda_n < \lambda$, the ratio is strictly greater than $1$. Thus, we can set $-\varepsilon_0 := 1 - \frac{\lambda -\lambda_0}{\lambda_n -\lambda_0} < 0$, establishing that $Q_\lambda(v) \le -\varepsilon_0$ for all $v \in \mathcal{S}^-$. 
    
    By continuity, this strict negativity persists for small perturbations. Specifically, if $v \in B_\delta(\mathcal{S}^-)$, we can write $v = v_1 + \varphi$ with $v_1 \in \mathcal{S}^-$ and $\|\varphi\| < \delta$. Using the Cauchy--Schwarz inequality, one easily bounds the cross terms to find
    \[
        Q_\lambda(v) \le -\varepsilon_0 + C(\lambda, \lambda_1)\delta + \delta^2.
    \]
    Choosing $\delta > 0$ sufficiently small such that $C(\lambda, \lambda_1)\delta + \delta^2 < \frac{\varepsilon_0}{2}$ confirms the claim \eqref{eq_claimQ_lam<}.

    Let $x \in \Omega$ and choose $r > 0$ such that $\mathbf{B}(x,r) \subset \Omega$. For $0 < r_0 < r$, let $\psi_{r_0}$ denote the minimizer of $\operatorname{cap}_{\mathbf{B}(x,r)}(\mathbf{B}(x,r_0))$.
    Let $x \in \Omega$ and $\mathbf{B}(x,r) \subset \Omega$. For $0 < r_0 < r$, let $\psi_{r_0}$ denote the minimizer of $\operatorname{cap}_{\mathbf{B}(x,r)}(\mathbf{B}(x,r_0))$. 
    {We fix $r_0 > 0$ sufficiently small such that for every $v \in \mathcal{S}^-$, the normalized truncated function $\frac{(1-\psi_{r_0})v}{\|(1-\psi_{r_0})v\|} \in B_\delta(\mathcal{S}^-)$.} 
    We define the linear map $\mathcal{H}_0 : \mathbb{R}^n \to H_0^1(\Omega \setminus \mathbf{B}(x,r_0))$ by
    \[
        \mathcal{H}_0(\xi_1,\ldots,\xi_n) := (1-\psi_{r_0}) \sum_{j=1}^n \xi_j e_j.
    \]
    Because the normalized elements of $\mathcal{H}_0(\mathbb{R}^n \setminus \{0\})$ belong to $B_\delta(\mathcal{S}^-)$, equation \eqref{eq_claimQ_lam<} dictates that the quadratic form $Q_\lambda$ is strictly negative on the entire subspace $\mathcal{H}_0(\mathbb{R}^n)$. Since the nonlinear term in the energy functional is also strictly negative, this yields
    \[
        \sup_{v \in \mathcal{H}_0(\mathbb{R}^n)} \mathcal{I}_{\lambda,\varrho}(v) \le 0.
    \]
    On the other hand, by Lemma \ref{lem_I_l_Sn}, there exists an odd continuous map $\mathcal{H} : \mathbb{S}^N \to H_0^1(\mathbf{B}(x,r_0/2))$ such that 
    \[
        \mathcal{H}(\theta)^\pm \in \mathcal{N}_{\lambda,\varrho} \quad \text{and} \quad \mathcal{I}_{\lambda,\varrho}(\mathcal{H}(\theta)^\pm) < \frac{1}{N} S^{N/2}.
    \]
    By choosing a cutoff function $\eta$ supported in the annulus $\mathbf{B}(x,r_0) \setminus \mathbf{B}(x,r_0/2)$ and projecting a highly concentrated bubble $\mathcal{R}_{\lambda, \varrho}(\eta U_{\varepsilon,0})$, we obtain a function
    \[
        \varphi_0 \in H_0^1\bigl(\mathbf{B}(x,r_0) \setminus \mathbf{B}(x,r_0/2)\bigr) \cap \mathcal{N}_{\lambda,\varrho} \quad \text{with} \quad \mathcal{I}_{\lambda,\varrho}(\varphi_0) < \frac{1}{N} S^{N/2}.
    \]
    We construct an intermediate domain
    \[
        Z := \left(\mathbb{S}^N \times [-1,1] \right) \cup \left(\overline{\mathbb{B}^{N+1}} \times \{-1,1\}\right) \subset \mathbb{R}^{N+2},
    \]
    and extend $\mathcal{H}$ to a continuous map $\mathcal{H}' : Z \to H_0^1(\mathbf{B}(x,r_0))$ by defining it on the boundary faces:
    \[
        \mathcal{H}'(s\theta,t) =
        \begin{cases}
            (1-t)\,\mathcal{H}(\theta)^- + (1+t)\,\mathcal{H}(\theta)^+, & s=1,\\[4pt]
            2s\,\mathcal{H}(\theta)^+ + (1-s)\varphi_0, & t=1,\\[4pt]
            2s\,\mathcal{H}(\theta)^- - (1-s)\varphi_0, & t=-1.
        \end{cases}
    \]
    Extending $\mathcal{H}'$ radially yields $\mathcal{H}^0 : \mathbb{R}^{N+2} \to H_0^1(\mathbf{B}(x,r_0))$ given by $\mathcal{H}^0(\zeta z) := \zeta\,\mathcal{H}'(z)$ for $\zeta \ge 0$. Because $\mathcal{H}'$ maps into $\mathcal{N}_{\lambda, \varrho}$ where energy is bounded below $\frac{1}{N}S^{N/2}$, and taking into account the interaction of disjoint supports, $\mathcal{H}^0$ is an odd continuous map satisfying
    \[
        \lim_{|\xi|\to\infty} \mathcal{I}_{\lambda,\varrho}(\mathcal{H}^0(\xi)) = -\infty  \quad \text{and} \quad  \sup_{v \in \mathcal{H}^0(\mathbb{R}^{N+2})} \mathcal{I}_{\lambda,\varrho}(v) < \frac{2}{N} S^{N/2}.
    \]
    Finally, for $n \ge 1$, we join the low-energy space and the bubble topology by defining
    \[
        \widetilde{\mathcal{H}}(\xi_1,\xi_2) := \mathcal{H}_0(\xi_1) + \mathcal{H}^0(\xi_2), \qquad \xi_1 \in \mathbb{R}^n,\ \xi_2 \in \mathbb{R}^{N+2}.
    \]
    In standard Euclidean problems, one often achieves energy decoupling between the linear eigenspace and the topological bubbles simply by translating the bubbles infinitely far away. However, in our hyperbolic framework, the conformal weight $\varrho(x)$ strictly prevents such global shifts. To bypass this broken translation invariance, our use of the capacity minimizer $\psi_{r_0}$ rigorously enforces that $\mathcal{H}_0$ and $\mathcal{H}^0$ have exactly disjoint supports (the former vanishes entirely on $\mathbf{B}(x,r_0)$ while the latter is strictly supported within it). Consequently, their energies decouple perfectly without any need for spatial translation: $\mathcal{I}_{\lambda, \varrho}(\mathcal{H}_0 + \mathcal{H}^0) = \mathcal{I}_{\lambda, \varrho}(\mathcal{H}_0) + \mathcal{I}_{\lambda, \varrho}(\mathcal{H}^0)$. Since the energy of the linear component $\mathcal{H}_0$ is non-positive, the combined map strictly inherits the upper bound of the bubbles, yielding $\sup \mathcal{I}_{\lambda, \varrho}(\widetilde{\mathcal{H}}) < \frac{2}{N}S^{N/2}$. When $n=0$, we simply take $\widetilde{\mathcal{H}} := \mathcal{H}^0$.
\end{proof}
\subsection{Proof of the Multiplicity Result}
In this final section, we synthesize our geometric energy estimates and the equivariant category framework to prove our main multiplicity result. We begin by establishing strict upper bounds on the minimax energy levels $c_k$, ensuring they remain strictly below the compactness threshold. 

\begin{proposition}\label{prop_main_thm}
We have the following strict upper bounds for the minimax values $c_k$:
\begin{enumerate}    
    \item If $\lambda_n < \lambda < \lambda_{n+1}$ for some $n \ge 1$, then
    \begin{align}\label{eq_c_N+2<}
            c_{N+2} < \frac{2}{N} S^{\frac{N}{2}}.
    \end{align}
    \item If $\lambda_0 < \lambda < \lambda_1$, then
    \begin{align}\label{eq_c_N+1<}
                c_{N+1} < \frac{2}{N} S^{\frac{N}{2}}.        
    \end{align}
    \item If $\lambda_n < \lambda = \lambda_{n+1} = \cdots = \lambda_{n+m} < \lambda_{n+m+1}$ with $m < N+2$, then
    \begin{align}\label{eq_c_N+2-m<}
        c_{N+2-m} < \frac{2}{N} S^{\frac{N}{2}}. 
    \end{align}
\end{enumerate}
\end{proposition}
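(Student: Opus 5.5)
The approach follows Clapp--Weth: use the odd map $\widetilde{\mathcal{H}}$ of Lemma \ref{lem_H:Rn+N+2} to build a compact symmetric set $Y$ of large relative category on which $\mathcal{I}_{\lambda,\varrho}$ stays below $\frac{2}{N}S^{N/2}$. Set $c^* := \sup_{\widetilde{\mathcal{H}}(\mathbb{R}^{n+N+2})}\mathcal{I}_{\lambda,\varrho} < \frac{2}{N}S^{N/2}$. By the definition of $c_k$, monotonicity of $c\mapsto\gamma_{D_\lambda}(\mathcal{I}^c_{\lambda,\varrho}\cup D_\lambda)$, and Lemma \ref{lem__categor}\eqref{it_cat_2}, it suffices to show
\[
\gamma_{D_\lambda}\big(\mathcal{I}^{c^*}_{\lambda,\varrho}\cup D_\lambda\big)\ \ge\ k
\]
for the relevant $k$. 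This yields $c_k\le c^*<\frac{2}{N}S^{N/2}$, hence the strict inequality.

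\emph{Case $\lambda\ge\lambda_1$.} Here $D_\lambda=\mathcal{I}^{d_\lambda}_{\lambda,\varrho}$, and $V^+$ has codimension $n+m$. Since $\widetilde{\mathcal{H}}(\xi)\to-\infty$ in energy, choose $R$ large so that $\mathcal{I}_{\lambda,\varrho}(\widetilde{\mathcal{H}}(\xi))<d_\lambda$ for $|\xi|\ge R$. Put $Y:=\widetilde{\mathcal{H}}(\overline{\mathbf{B}(0,R)})\cup D_\lambda\subset \mathcal{I}^{c^*}_{\lambda,\varrho}\cup D_\lambda$. The lower bound on $\gamma_{D_\lambda}(Y)$ comes from a linking/intersection argument. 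Given a covering $U_0,\dots,U_k$ as in Definition \ref{de_rel_cat}, the odd retraction $\chi_0:U_0\to D_\lambda$ sends $U_0$ into $D_\lambda$. By \eqref{eq_rel_I,d}, $D_\lambda$ misses the sphere $\{v\in V^+:\|v\|=r_\lambda\}$. Composing with the orthogonal projection onto $(V^+)^\perp$ (dimension $n+m$) and a radial map, one obtains odd maps on the set $\widetilde{\mathcal{H}}^{-1}(U_0)\cap\overline{\mathbf{B}(0,R)}$, which contains the boundary sphere $\mathbb{S}^{n+N+1}_R$. The Borsuk--Ulam / genus-of-pairs argument (as in \cite{CW05}, via \cite{BC96}) then gives $k\ge (n+N+2)-(n+m)=N+2-m$. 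In case (i), $n\ge1$ and $m=0$, so this gives $c_{N+2}<\frac{2}{N}S^{N/2}$. In case (iii) it gives $c_{N+2-m}$. Concretely, I would show that any odd map $h:\overline{\mathbf{B}^{n+N+2}}\to H$ which is the identity-on-$D_\lambda$-compatible, with $h(\partial)\subset D_\lambda$, intersects $\partial B_{r_\lambda}\cap V^+$ in a set of genus $\ge N+2-m$. This is done via the odd map $\xi\mapsto(P_{(V^+)^\perp}h(\xi),\|h(\xi)\|-r_\lambda)$ and a degree argument, and then subadditivity excludes such points from $U_0$.

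\emph{Case $\lambda_0<\lambda<\lambda_1$.} Here $n=0$, $\widetilde{\mathcal{H}}=\mathcal{H}^0$ on $\mathbb{R}^{N+2}$, and $D_\lambda=B_\alpha(\pm P)\cup\mathcal{I}^0_{\lambda,\varrho}$. The relevant zero-energy and cone parts must be identified. On the faces $t=\pm1$ of $Z$ the image lies in $\pm P$, and for large radius the energy is negative. So $D_\lambda$ contains the image of $\partial$(large ball) together with the two caps. This loses one more dimension than before, but no eigen-directions are lost, giving $\gamma_{D_\lambda}\ge N+1$. The argument: on $\mathcal{H}^0(\mathbb{S}^N\times(-1,1))$ the functions change sign with both parts on $\mathcal{N}_{\lambda,\varrho}$, hence they lie outside $B_\alpha(\pm P)$ and have positive energy. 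An odd retraction $\chi_0$ into $D_\lambda$ would then produce, via the sign classification "closer to $P$ / $-P$ / negative energy", an odd map of a set containing $\mathbb{S}^N$ into $\{-1,1\}$-covers, forcing $k\ge N+1$ by the genus of $\mathbb{S}^N$.

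The \textbf{main obstacle} is this topological lower bound on $\gamma_{D_\lambda}$, and in particular verifying in the $\lambda<\lambda_1$ case that $\alpha$ can be taken so small that the sign-changing Nehari elements $\mathcal{H}(\theta)$ stay uniformly away from $B_\alpha(\pm P)$. For that I would use $\operatorname{dist}(v,P)\ge S^{1/2}|v^-|_{2^*}$, together with the Nehari lower bound $|w|_{2^*}\ge c>0$ for $w\in\mathcal{N}_{\lambda,\varrho}$.
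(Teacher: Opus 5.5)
Your argument for $\lambda\ge\lambda_1$ (parts (i) and (iii)) is essentially the paper's. Take $h=\widetilde\chi_0\circ\widetilde{\mathcal H}$, with $\widetilde\chi_0$ an odd extension of $\chi_0$ to $H$. Points of $\widetilde{\mathcal H}^{-1}(U_0)$ cannot be sent onto $\{v\in V^+:\|v\|=r_\lambda\}$, and a genus count on $\partial\{\xi:\|h(\xi)\|<r_\lambda\}$ (genus $n+N+2$), using the projection onto $(V^+)^\perp$, gives $k\ge N+2-m$. Two small repairs are needed. First, $\xi\mapsto(P_{(V^+)^\perp}h(\xi),\|h(\xi)\|-r_\lambda)$ is not odd, since its last component is even, so the count must be made on the level set $\|h\|=r_\lambda$. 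Second, you need $\|\widetilde{\mathcal H}\|>r_\lambda$ on $|\xi|=R$, which $h(\partial)\subset D_\lambda$ alone does not give, since $0\in D_\lambda$.

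The case $\lambda_0<\lambda<\lambda_1$, however, does not work as proposed. Classifying $\chi_0(\mathcal H(\theta))$ as close to $P$, close to $-P$, or of negative energy does not produce an odd map into $\{\pm1\}$. All three pieces of $D_\lambda$ contain $0$, $B_\alpha(P)\cap B_\alpha(-P)\neq\emptyset$, and $\mathcal I^0_{\lambda,\varrho}$ carries no sign. That $\mathcal H(\theta)\notin D_\lambda$ does not help, because you have no control over $\chi_0$ on $U_0\setminus D_\lambda$. Nothing prevents $U_0$ from containing a small symmetric neighbourhood of the compact set $\mathcal H(\mathbb S^N)$ on which $\chi_0\equiv 0$. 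Moreover, even granting such a map, $\gamma(\mathbb S^N)=N+1\le 1+k$ only yields $k\ge N$, one short of what is needed for $c_{N+1}$. You need a set of genus $N+2$, i.e. you must work in all of $\mathbb R^{N+2}$.

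The missing idea is to let the Nehari manifold replace the sphere $\{v\in V^+:\|v\|=r_\lambda\}$ as the linking set; this is the route the paper takes. For $\lambda<\lambda_1$ the quadratic part is coercive. Hence $\mathcal N_{\lambda,\varrho}$ is a radial graph over the unit sphere bounding $\mathcal C_0=\{tv:v\in\mathcal N_{\lambda,\varrho},\,0\le t<1\}$, and $\mathcal I_{\lambda,\varrho}\ge 0$ on $\overline{\mathcal C_0}$.

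Set $\mathcal O:=\{\xi\in\mathbb R^{N+2}:\widetilde\chi_0(\widetilde{\mathcal H}(\xi))\in\overline{\mathcal C_0}\}$. This is a bounded symmetric neighbourhood of $0$: for large $|\xi|$, $\widetilde{\mathcal H}(\xi)\in D_\lambda$ is fixed by $\widetilde\chi_0$ and has negative energy. So $\gamma(\partial\mathcal O)=N+2$, while $\widetilde\chi_0\widetilde{\mathcal H}(\partial\mathcal O)\subset\mathcal N_{\lambda,\varrho}$.

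On the Nehari manifold, $D_\lambda$ does split. First, $\mathcal N_{\lambda,\varrho}\cap\mathcal I^0_{\lambda,\varrho}=\emptyset$. Second, for $\alpha$ small, $\mathcal N_{\lambda,\varrho}\cap B_\alpha(P)\cap B_\alpha(-P)=\emptyset$, because $|v^\pm|_{2^*}\le S^{-1/2}\alpha$ would contradict the uniform lower bound on $|v|_{2^*}$ over $\mathcal N_{\lambda,\varrho}$. Hence assigning $\pm1$ according as $\chi_0(\widetilde{\mathcal H}(\xi))\in B_\alpha(\pm P)$ gives an odd continuous map on $\widetilde{\mathcal H}^{-1}(U_0)\cap\partial\mathcal O$, and $N+2\le 1+k$.

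The separation estimate you flag as the main obstacle is the right analytic input. It has to be applied to all of $\mathcal N_{\lambda,\varrho}\cap D_\lambda$ on this covering-dependent sphere $\partial\mathcal O$, not to the fixed elements $\mathcal H(\theta)$. The fact that the caps of $Z$ land in $\pm P$ plays no role in the category estimate.
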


\begin{proof}
Let $n \ge 0$ be the largest integer such that $\lambda_n < \lambda$. By Lemma \ref{lem_H:Rn+N+2}, there exists an odd continuous map
\[
    \widetilde{\mathcal{H}} : \mathbb{R}^{n+N+2} \to H.
\]
From the energy estimate \eqref{eqn_prop_tild_H}, we obtain the uniform bound
\begin{equation}\label{eq_def_c_inf}
    c_\infty := 
    \sup_{v \in \widetilde{\mathcal{H}}(\mathbb{R}^{n+N+2})} \mathcal{I}_{\lambda,\varrho}(v)
    < \frac{2}{N} S^{\frac{N}{2}}.
\end{equation}
Let us define the integer $k$ as the relative category evaluated at this maximum energy:
\[
    k := \gamma_{D_\lambda} \bigl(\mathcal{I}_{\lambda,\varrho}^{c_\infty} \cup D_\lambda \bigr).
\]
By the definition of the relative category (Definition \ref{de_rel_cat}), there exists an open symmetric covering $U_0, U_1, \dots, U_k$ of the set $\mathcal{I}_{\lambda,\varrho}^{c_\infty} \cup D_\lambda$, together with odd continuous maps
\[
    \chi_0 : U_0 \to D_\lambda, \qquad \chi_j : U_j \to \{-e_{n+j}, e_{n+j}\} \quad \text{for } j=1,\dots,k.
\]
By Tietze’s extension theorem, $\chi_0$ extends to a global odd continuous map $\widetilde{\chi}_0 : H \to H$.

\medskip
\textbf{Case (i):} $\lambda_n < \lambda < \lambda_{n+1}$ for some $n \ge 1$.
Let us define the pre-image set
\[
    \mathcal{O} := \{\xi \in \mathbb{R}^{n+N+2} : \|\widetilde{\chi}_0(\widetilde{\mathcal{H}}(\xi))\| \le r_\lambda\},
\]
where $r_{\lambda}$ is the radius from \eqref{eq_rel_I,d} satisfying
\begin{equation}\label{eq_I>2dl_rl}
    \mathcal{I}_{\lambda, \varrho}(v) \ge 2 d_\lambda \quad \text{for every } v \in V^+ \text{ with } \|v\| = r_\lambda.
\end{equation}
Since both $\widetilde{\chi}_0$ and $\widetilde{\mathcal{H}}$ are odd functions, $\mathcal{O}$ is a symmetric neighborhood of the origin. Moreover, since $\lim_{|\xi| \to \infty} \mathcal{I}_{\lambda, \varrho}(\widetilde{\mathcal{H}}(\xi)) = -\infty$ and elements mapped to $D_\lambda$ cannot have arbitrarily negative energy, $\mathcal{O}$ is bounded. 

We set $V_j := (\widetilde{\mathcal{H}}^{-1} U_j) \cap \partial \mathcal{O}$ for $j = 0, 1, \ldots, k$. The boundary condition forces $\|\widetilde{\chi}_0(\widetilde{\mathcal{H}}(\xi))\| $ $= r_\lambda$. From \eqref{eq_I>2dl_rl} and the fact that $\widetilde{\chi}_0$ maps into $D_\lambda = \mathcal{I}_{\lambda, \varrho}^{d_\lambda}$, the energy is too low to lie in $V^+$. Thus,
$$
\widetilde{\chi}_0(\widetilde{\mathcal{H}}(V_0)) \subset \{ v \in H : \|v\| = r_\lambda \} \setminus V^+.
$$
Composing the map $\widetilde{\chi}_0 \circ \widetilde{\mathcal{H}}\mid_{V_0}$ with the orthogonal projection $H \to V^-$ yields a well-defined, non-vanishing odd continuous map
\[
{\chi^0} : V_0 \to V^- \setminus \{0\}.
\]
Take an even partition of unity $\{\pi_0, \pi_1, \ldots, \pi_k\}$ subordinated to the covering $\{V_0, V_1, \ldots, V_k\}$ of $\partial \mathcal{O}$. We construct the combined map $\chi$ by
\begin{align}\label{eq_def_chIprop}
\chi(\xi) = \pi_0(\xi) {\chi}^0(\xi) + \sum_{j=1}^k \pi_j(\xi) \chi_j( \widetilde{\mathcal{H}} (\xi)).
\end{align}
Because the targets of $\chi_j$ are basis vectors spanning mutually orthogonal dimensions, $\chi$ maps into $\mathrm{span}\{e_1, \ldots, e_{n+k}\} \cong \mathbb{R}^{n+k}$. Furthermore, $\chi$ is an odd continuous map such that $\chi(\xi) \neq 0$ for every $\xi \in \partial \mathcal{O}$.

The boundary $\partial \mathcal{O}$ bounds a symmetric neighborhood of the origin in $\mathbb{R}^{n+N+2}$, making it homeomorphic to the sphere $\mathbb{S}^{n+N+1}$. The target space excluding the origin is homotopy equivalent to $\mathbb{S}^{n+k-1}$. By the Borsuk-Ulam theorem, there cannot exist an odd continuous map from $\mathbb{S}^{n+N+1}$ to $\mathbb{S}^{n+k-1}$ unless the dimension of the domain is less than or equal to the dimension of the target. Thus, we must have
$$n + N + 1 \le n + k - 1 \implies k \ge N + 2.$$
This topological dimension bound is equivalent to the category estimate $\gamma_{D_\lambda}(\mathcal{I}_{\lambda, \varrho}^{c_{\infty}} \cup D_\lambda) \geq N+2$. Since $c_{N+2}$ is the infimum of all values $c$ for which the relative category is at least $N+2$, we deduce from \eqref{eq_def_c_inf} the strict upper bound $c_{N+2} \leq c_{\infty} < \frac{2}{N} S^{\frac{N}{2}}$.

\medskip
\textbf{Case (ii):} $\lambda_0 < \lambda < \lambda_1$. 
Due to the variational characterization of $\lambda_1$ from $\eqref{eqn_prop_lam1}$, we have the strict inequality $(\lambda -\lambda_0)|\varrho v|_2^2 < (\lambda_1 -\lambda_0) |\varrho v|_2^2 \leq \|v\|^2$. This guarantees that the quadratic part of the functional is strictly positive: $\|v\|^2 - (\lambda -\lambda_0) |\varrho v|_{2}^2 > 0$ for all $v \in H \setminus \{0\}$. Consequently, the radial domain $\mathcal{V}_{\lambda, \varrho}$ is the entire space $H \setminus \{0\}$.

Let us define the ground state energy on the Nehari manifold
\begin{align}\label{de_c_0}
    c_0 := \inf_{v \in \mathcal{N}_{\lambda, \varrho}} \mathcal{I}_{\lambda, \varrho}(v).
\end{align}
Stapelkamp \cite[Page 57]{Sta03} demonstrated that for $\lambda \in \left(\lambda_0, \lambda_1\right)$, problem $\eqref{eq_trns_BNP_v}$ admits at least one positive solution. This positive solution is exactly the minimizer $v_0$ that attains $c_0$ on $\mathcal{N}_{\lambda, \varrho}$. Thus, it follows that for all $v \in \mathcal{N}_{\lambda, \varrho}$:
 \begin{align}\label{eqn_I_l>c0}
     \mathcal{I}_{\lambda, \varrho}(v) = \frac{1}{N}|v|_{2^*}^{2^*} \geq c_0 > 0.
 \end{align} 
We define the sign-changing Nehari set
$$
\mathcal{E}_{\lambda, \varrho} := \{ v \in \mathcal{N}_{\lambda, \varrho} : v^+ \in \mathcal{N}_{\lambda, \varrho} \ \text{and} \ v^- \in \mathcal{N}_{\lambda, \varrho} \}.
$$
Assume, for contradiction, that the distance between $\mathcal{E}_{\lambda, \varrho}$ and the sign-definite set $P \cup (-P)$ is zero. Then there exists a sequence $\{v_j\}_{j \in \mathbb{N}} \subset \mathcal{E}_{\lambda, \varrho}$ such that
\begin{align}\label{dist_vj_p_0}
\lim_{j \rightarrow \infty}\mathrm{dist}(v_j, P \cup (-P)) = 0.    
\end{align}
This implies that a subsequence either converges toward $P$ or toward $-P$:
\begin{enumerate}[(I)]
    \item \textit{If $v_j$ converges toward $P$:} Recalling the Sobolev estimate $\eqref{eqn_v-<d(u,p)}$, we have
    $$
    |v^-_j|_{2^*} \leq S^{-1/2} \mathrm{dist}(v_j, P) \rightarrow 0 \quad \text{as } j \to \infty.
    $$
    However, since $v_j \in \mathcal{E}_{\lambda, \varrho}$, its negative part $v^-_j$ lies strictly on the Nehari manifold. By $\eqref{eqn_I_l>c0}$, this requires $\frac{1}{N} |v^-_j|_{2^*}^{2^*} \geq c_0 > 0$, yielding an immediate contradiction.
    \item \textit{If $v_j$ converges toward $-P$:} By \eqref{eq_vj+<d(vj,P)},
    $$
    |v^+_j|_{2^*} \leq S^{-1/2} \mathrm{dist}(v_j, -P) \rightarrow 0, \quad \text{as } j \to \infty,
    $$
    which again contradicts the strict energy lower bound $\eqref{eqn_I_l>c0}$.
\end{enumerate}
Thus, assumption \eqref{dist_vj_p_0} is false, and there exists a uniform constant $\alpha_0 > 0$ such that $\mathrm{dist}(v, P \cup (-P)) \geq \alpha_0$ for every $v \in \mathcal{E}_{\lambda, \varrho}$. By choosing $\alpha < \alpha_0$ in the definition \eqref{de_D_lam} of $D_\lambda$, we strictly separate the target set, obtaining $D_\lambda \cap \mathcal{N}_{\lambda, \varrho} \subset \mathcal{N}_{\lambda, \varrho} \setminus \mathcal{E}_{\lambda, \varrho}$. It follows by standard topological arguments (see \cite[Lemma 2.5]{CCN97}) that $\mathcal{N}_{\lambda, \varrho} \setminus \mathcal{E}_{\lambda, \varrho}$ consists of exactly two connected components, say $\mathcal{W}$ and $-\mathcal{W}$, thereby admitting a continuous odd map $\Psi : \mathcal{N}_{\lambda, \varrho} \setminus \mathcal{E}_{\lambda, \varrho} \to \{-e_{k+1}, e_{k+1}\}$.

Since in this regime the Nehari manifold $\mathcal{N}_{\lambda, \varrho}$ is radially diffeomorphic to the unit sphere in $H$, we let $\mathcal{C}_0$ be the bounded connected component of $H \setminus \mathcal{N}_{\lambda, \varrho}$ containing the origin. Set
$$
\mathcal{O} := \{ \xi \in \mathbb{R}^{N+2} : \widetilde{\chi}_0(\widetilde{\mathcal{H}}(\xi)) \in \overline{\mathcal{C}_0} \}.
$$
As $\lim_{|\xi| \to \infty} \mathcal{I}_{\lambda, \varrho}(\widetilde{\mathcal{H}}(\xi)) = -\infty$, $\mathcal{O}$ is a bounded symmetric neighborhood of the origin. Setting $V_j := (\widetilde{\mathcal{H}}^{-1} U_j) \cap \partial \mathcal{O}$ for $j = 0, \ldots, k$, we define
$$
\chi^0 := \Psi \circ \widetilde{\chi}_0 \circ \widetilde{\mathcal{H}} : V_0 \rightarrow \{ -e_{k+1}, e_{k+1}\}.
$$
Using a construction directly analogous to $\eqref{eq_def_chIprop}$, we obtain an odd continuous map $\chi : \partial \mathcal{O} \to \mathrm{span}\{e_1, \ldots, e_{k+1}\} \cong \mathbb{R}^{k+1}$ that does not vanish on $\partial \mathcal{O}$. Here, the domain boundary is $\mathbb{S}^{N+1}$ and the target is $\mathbb{R}^{k+1} \setminus \{0\} \simeq \mathbb{S}^k$. Applying the Borsuk-Ulam theorem dictates that $N+1 \le k$, or $k \geq N+1$. This establishes the category estimate $\gamma_{D_\lambda}(\mathcal{I}_{\lambda, \varrho}^{c_{\infty}} \cup D_\lambda) \geq N+1$, which guarantees $c_{N+1} \leq c_{\infty} < \frac{2}{N} S^{\frac{N}{2}}$.

\textbf{Case (iii):} $\lambda = \lambda_{n+1} = \cdots = \lambda_{n+m}$ is an eigenvalue of multiplicity $m < N+2$. 
The proof structurally mimics \textbf{Case (i)}, but the map ${\chi}^0$ projects onto a higher-dimensional target space: ${\chi}^0 : V_0 \to \mathrm{span}\{e_1, \ldots, e_{n+m}\} \setminus \{0\}$. The resulting aggregated odd continuous map is
\[
\chi : \partial \mathcal{O} \to \mathrm{span}\{e_1, \ldots, e_{n+m+k}\} \setminus \{0\} \cong \mathbb{R}^{n+m+k} \setminus \{0\}.
\]
Applying Borsuk-Ulam to the dimensions yields $n + N + 1 \le n + m + k - 1$, which simplifies to $k \ge N + 2 - m$. This category estimate translates directly to the bound $c_{N+2-m} < \frac{2}{N} S^{\frac{N}{2}}$.
\end{proof}

With these bounds established, we now utilize the minimax energy levels $c_k$ to count the distinct pairs of critical points. This allows us to prove Theorem \ref{th_MBNP_Hn>Rn}, from which our primary geometric result, Theorem \ref{th_MBNP_Hn}, directly follows.

\begin{proof}[\textbf{Proof of Theorem \ref{th_MBNP_Hn>Rn}}]
We assume throughout that the critical set $K_c$ is finite for every $c < \frac{2}{N} S^{N/2}$; otherwise, if any $K_c$ is infinite, the theorem is immediately satisfied. We structure the multiplicity counting according to the eigenvalue spectrum.

\textbf{Case (i):} $\lambda_n < \lambda < \lambda_{n+1}$ for some $n \ge 1$.  
The minimax levels are monotonically non-decreasing: $c_1 \le c_2 \le \cdots \le c_k$. Using the bound $c_{N+2} < \frac{2}{N}S^{N/2}$ from Proposition \ref{prop_main_thm}, if $c_k = c_{k+1}$ for any $k \le N+1$, Lemma \ref{lem_inf_case_Kc} guarantees that $K_{c_k}$ is infinite, and we are done. Thus, we may assume the strict inequalities
\begin{align}\label{eq_c1<c2<S}
c_1 < c_2 < \cdots < c_{N+2} < \frac{2}{N}S^{N/2}.    
\end{align}
If all of these levels lie above the single-bubble threshold (i.e., $c_k \ge \frac{1}{N}S^{N/2}$ for all $1 \le k \le N+1$), then Corollary \ref{cor_cri_val_I}\eqref{it_c_k_or_ck-} dictates that for each $k$, either $c_k$ or $c_k - \frac{1}{N}S^{N/2}$ is a true critical value. Because the $c_k$ sequence is strictly increasing, this generates at least $N+1$ distinct critical points, more than proving the claim. 

Otherwise, there exists an integer $k_0 \le N+1$ defining a crossover
\[
c_{k_0} < \frac{1}{N}S^{N/2} \le c_{k_0+1}.
\]
By Corollary \ref{cor_cri_val_I}\eqref{it_c_k<SN/2}, the functional $\mathcal{I}_{\lambda,\varrho}$ possesses exact critical points for all values below the threshold, and shadow critical points (at $c_k$ or $c_k - \frac{1}{N}S^{N/2}$) above it. This yields at least
\[
k_{\min} := \max \left\{k_0, (N+2) - (k_0+1)\right\}
\]
distinct non-trivial critical pairs. By elementary properties of the maximum, $k_{\min} \ge \frac{N+1}{2}$, which verifies the multiplicity claim in Theorem \ref{th_MBNP_Hn>Rn}(i). Furthermore, the energy estimate \eqref{eq_nabu2<2S_Rn} follows directly from the fact that all relevant critical points reside on the Nehari manifold where $\mathcal{I}_{\lambda,\varrho}(v) < \frac{2}{N}S^{N/2}$.

\textbf{Case (ii):} $\lambda_0 < \lambda < \lambda_1$.  
Recall the ground state energy $c_0$ from \eqref{de_c_0}. From the bubble estimate \eqref{eqn_I<1/N_S}, we know $0 < c_0 < \frac{1}{N}S^{N/2}$. 

We claim that $K_{c_0} \subset P \cup (-P)$. Indeed, if $v \in K_{c_0}$ were a sign-changing critical point, then both its positive and negative components $v^+$ and $v^-$ would belong to the Nehari manifold $\mathcal{N}_{\lambda,\varrho}$. Since $\mathcal{I}_{\lambda,\varrho}$ is additive over disjoint supports, we would have
\[
\mathcal{I}_{\lambda,\varrho}(v) = \mathcal{I}_{\lambda,\varrho}(v^+) + \mathcal{I}_{\lambda,\varrho}(v^-) \ge 2c_0,
\]
which contradicts $\mathcal{I}_{\lambda,\varrho}(v) = c_0 > 0$. Thus, all critical points at the ground state energy are strictly sign-definite. 

Crucially, because $D_\lambda$ includes an $\alpha$-neighborhood of $P \cup (-P)$, the ground state critical set $K_{c_0}$ lies strictly within the interior of the invariant set $D_\lambda$. Since the relative category minimax value $c_1$ requires topological coverings of sets traversing outside $D_\lambda$, it follows that $c_1$ must strictly exceed the energy of elements confined inside $D_\lambda$. Therefore, $c_0 < c_1$. 

As in \textbf{Case (i)}, utilizing \eqref{eq_c_N+1<}, we can assume a strictly increasing chain of minimax values
\[
c_0 < c_1 < c_2 < \cdots < c_{N+1} < \frac{2}{N}S^{N/2}.
\]
By Corollary \ref{cor_cri_val_I}, there exists an integer $\tilde{k}_0 \le N+1$ marking the compactness threshold
\[
c_{\tilde{k}_0} < \frac{1}{N}S^{N/2} \le c_{\tilde{k}_0 + 1}.
\]
Accounting for $c_0$ alongside the higher levels, the functional admits at least
\[
\tilde{k}_{\min} := \max\left\{\tilde{k}_0 + 1, (N+2) - (\tilde{k}_0 + 1)\right\}
\]
non-trivial critical pairs. Since $\tilde{k}_{\min} \ge \frac{N+2}{2}$, the multiplicity bound for Theorem \ref{th_MBNP_Hn>Rn}(ii) is established.

\textbf{Case (iii):} $\lambda = \lambda_{n+1} = \cdots = \lambda_{n+m}$ is an eigenvalue of multiplicity $m < N+2$. 
The counting logic mimics \textbf{Case (i)}, but is restricted by the shorter sequence of valid minimax levels due to the multiplicity $m$. By \eqref{eq_c_N+2-m<}, the chain of strict inequalities is reduced to length $N+2-m$. This reduction shifts the maximum counting to $\max\{k_0, (N+2-m) - k_0\} \ge \left\lceil \frac{N+2-m}{2} \right\rceil$, yielding at least $\frac{N+1-m}{2}$ pairs, which completes the proof.
\end{proof}
\section*{Concluding Remarks and Future Directions}

We conclude this article by highlighting a few geometric and analytical aspects of our results, as well as outlining some natural open problems that arise from this work.

\begin{enumerate}
\item \textbf{Nodal Properties of the Solutions:}
    We established that ground state solutions (at energy $c_0$) are strictly sign-definite. Because higher minimax levels $c_k$ ($k \ge 1$) are constructed outside the neighborhood $D_\lambda$ containing the positive and negative cones, the corresponding higher-energy solutions must change sign. A natural open question is whether one can bound the number of nodal domains for these solutions using the topological index $k$.
    %, akin to Courant's nodal domain theorem.
    \item \textbf{The Lower Bound for $\lambda$:}
    Our requirement $\lambda > \frac{N(N-2)}{4}$ is geometrically intrinsic to the problem. While the bottom of the $L^2$-spectrum of $-\Delta_{\bn}$ on the whole space is $\frac{(N-1)^2}{4}$, the constant $\frac{N(N-2)}{4}$ arises directly as the conformal shift associated with the scalar curvature of the hyperbolic space. Below this critical conformal threshold, the transformed Euclidean operator $-\Delta - (\lambda - \frac{N(N-2)}{4})\varrho^2$ may lose its strict coercivity, fundamentally altering the topological structure of the Nehari manifold. Finding multiple solutions for $\lambda \le \frac{N(N-2)}{4}$ via alternative linking structures remains a non-trivial open problem.

\end{enumerate}

\section*{Acknowledgments}
 SG thanks the Anusandhan National Research Foundation (ANRF), India, for support under the ARG-MATRICS Grant No. ANRF/ARGM/2025/001570/MTR. TR is supported by the FWO Odysseus 1 grant G.0H94.18N: Analysis and Partial Differential Equations, the Methusalem program of the Ghent University Special Research Fund (BOF), (TR Project title: BOFMET2021000601). TR is also supported by a BOF postdoctoral fellowship at Ghent University BOF24/PDO/025.

\bibliographystyle{alphaurl}
\bibliography{Ref_MSBNP_on_Hn}
\end{document}